\newtheorem{thm}{Theorem}
\newtheorem{lemma}[thm]{Lemma}
\newtheorem{prop}{Proposition}
\newtheorem{remark}[thm]{Remark}
\newtheorem{ex}[thm]{Example}
\newcommand{\mb}{{\mathbf b}}
\newcommand{\bp}{{\mathbf p}}
\newcommand{\mx}{{\mathbf x}}
\newcommand{\bm}{{\mathbf m}}
\newcommand{\bn}{{\mathbf n}}
\newcommand{\bu}{{\mathbf u}}
\newcommand{\cB}{{\mathcal B}}
\newcommand{\cM}{{\mathcal M}}
\newcommand{\mL}{{\mathbb L}}
\newcommand{\chen}[1]{\textcolor{cyan}{#1}}
\newcommand{\has}[1]{\textcolor{magenta}{#1}}
\newtheorem{alg}[algorithm]{Algorithm}
\newcommand{\bB}{{\mathbf B}}
\newcommand{\mR}{{\mathbb R}}
\newcommand{\cF}{{\mathcal F}}
\newcommand{\cH}{{\mathcal H}}
\newcommand{\cL}{{\mathcal L}}
\newcommand{\cP}{{\mathcal P}}
\newcommand{\cU}{{\mathcal U}}
\newcommand{\bx}{{\bf x}}
\def\bC{{\bf C}}
\def\bK{{\bf K}}
\def\bU{{\bf U}}
\newcommand{\ett}{{\bf 1}}
\def\minwrt[#1]{\underset{#1}{\text{minimize}}}
\def\maxwrt[#1]{\underset{#1}{\text{maximize }}}
\newcommand{\diag}{\operatorname{diag}}
\newcommand{\tr}{\operatorname{trace}}
\newcommand{\argmin}{\operatorname{argmin}}
\providecommand{\keywords}[1]
{
  \small	
  \textbf{\textit{Keywords---}} #1
}
\definecolor{grey}{rgb}{0.6,0.6,0.6}
\definecolor{lightgray}{rgb}{0.97,.99,0.99}
\begin{document}
\title{Multi-marginal optimal transport and\\
probabilistic graphical models}

\author{Isabel Haasler, Rahul Singh, Qinsheng Zhang, Johan Karlsson, and Yongxin Chen
\thanks{This work was supported by the Swedish Research Council (VR), grant 2014-5870, SJTU-KTH cooperation grant and the NSF under grant 1901599 and 1942523.
}
\thanks{I.~Haasler and J.~Karlsson are with the Division of Optimization and Systems Theory, Department of Mathematics, KTH Royal Institute of Technology, Stockholm, Sweden. {\tt\small haasler@kth.se, johan.karlsson@math.kth.se}}
\thanks{R. Singh, Q. Zhang and Y. Chen are with the School of Aerospace Engineering,
Georgia Institute of Technology, Atlanta, GA, USA. {\tt\small \{qzhang419,rasingh,yongchen\}@gatech.edu}}
\thanks{I.~Haasler, R. Singh and Q. Zhang contribute equally to this paper.}}

\maketitle

\begin{abstract}
We study multi-marginal optimal transport problems from a probabilistic graphical model perspective. We point out an elegant connection between the two when the underlying cost for optimal transport allows a graph structure. In particular, an entropy regularized multi-marginal optimal transport is equivalent to a Bayesian marginal inference problem for probabilistic graphical models with the additional requirement that some of the marginal distributions are specified. This
relation on the one hand extends the optimal transport as well as the probabilistic graphical model theories, and on the other hand leads to fast algorithms for multi-marginal optimal transport by leveraging the well-developed algorithms in Bayesian inference.
Several numerical examples are provided to highlight the results.
\end{abstract}


\keywords{Optimal transport, Probabilistic graphical models, Belief Propagation, Norm-product, Iterative Scaling, Bayesian inference.}



\section{Introduction}

Optimal transport (OT) theory \cite{Eva99,Vil03} is a powerful tool in the study of probability distributions. The subject dates back to 1781, when the civil engineer Monge aimed to find an optimal strategy to move soil to road construction sites. Over 200 years of development have brought OT far beyond a civil engineering problem to a compelling mathematical framework which has found applications in economics, signal and image processing, systems and controls, statistics, and machine learning \cite{HakTanAng04,MueKarKolTan13,CheGeoPav14e,CheGeoPav15b,Gal16,Che16,ArjChiBot17}. The inherent properties of OT make it especially suitable for handling high-dimensional data with low-dimensional structure, which is the case in most machine learning settings. Thanks to the discoveries of several efficient algorithms such as iterative scaling, also called Sinkhorn iterations \cite{Cut13}, OT has become a powerful framework for a range of machine learning problems. In the 30s this algorithm has also been studied in the statistics community under the name contingence table \cite{DemSte40}.

The aim of standard OT problems is to find a joint distribution of two given marginals that minimizes the total transportation cost between them. 
In some applications, such as incompressible fluid flow modeling, video prediction, tomography, and information fusion problems, more than two marginal distributions are given. To tackle these problems, a multi-marginal generalization of OT has been developed, known as multi-marginal optimal transport (MOT) \cite{GanSwi98,carlier2003class,Pas12,Pas15,Nen16}. 
MOT was first proposed in \cite{Pas12} as a theoretical extension to OT. Since then, the problem has been studied from a theoretical viewpoint \cite{Pas15} as well as computational perspective \cite{Nen16}. It has found applications in signal processing \cite{ElvHaaJakKar20}, fluid dynamics \cite{BenCarNen19}, density functional theory \cite{ButDeGor12,KhoLinLinYin19}, and estimation and control \cite{CheConGeo18,CheKar18}.
Many results for the standard OT problem have been extended to the multi-marginal setting. In particular, the iterative scaling method \cite{Cut13} has been generalized to MOT \cite{BenCarCut15}. However, for the multi-marginal setting the computational complexity remains high, especially when the number of marginal distributions is large \cite{Nen16,lin2019complexity}.  

On a seemingly different topic, probabilistic graphical models (PGMs) \cite{WaiJor08,KolFri09,Att00} provide a framework for multi-dimensional random variables. They have been used for a large variety of applications including speech recognition, computer vision, communications, and bioinformatics \cite{BilZwe02,LarCalSan06,WaiJor08,KolFri09}.
PGMs capture the dependencies of a set of random variables compactly as a graph, and are an efficient and robust tool to study the relationship of several probabilistic quantities. Moreover, prior knowledge can be easily incorporated in the model. During the last decades, many efficient algorithms have been developed for inference and learning of PGMs. These algorithms leverage the underlying graph structure, making it possible to solve many otherwise extremely difficult problems. Well-known algorithms for the inference problem include, e.g., belief propagation, and the junction tree algorithm \cite{YedFreWei01,YedFreWei05,MurWeiJor99,YedFreWei03,AjiMce00}.

The purpose of this paper is to point out a surprising connection between MOT and PGMs.
More precisely, the main contribution of this work is to establish an equivalence between regularized MOT problems, where the cost function is structured according to a graph, 
and the inference problem for a PGM on the same graph, where some marginal distributions are fixed.
This connection leads to a novel interpretation for both MOT and PGMs. On the one hand, MOT can be viewed as an inference problem of a PGM with constraints on some of the marginals, that is, constrained inference problems, and on the other hand, the inference problem of a PGM is a MOT problem, where the available marginals are Dirac distributions. 
%

From a numerical point of view, this connection allows for adapting existing PGM algorithms to this class of MOT problems with graphical structured cost.
In this work, we focus on belief propagation (BP) \cite{Pea88,YedFreWei01} and an extension of it known as norm-product algorithm \cite{HazSha10}. These belong to the so called message passing algorithms which exploit the underlying graphical structure, and rely on exchanging information between the nodes. Thus, only local updates are needed, which greatly reduces the computational complexity of the inference problem.
If the graphical model is a tree, these algorithms converge globally to the exact conditional marginal distributions in a finite number of iterations. For general PGMs with cycles, there is no convergence guarantee, but both methods usually work well in practice and provide relatively accurate approximations of the marginals.
In this work, we develop algorithms for solving entropy regularized MOT problems, or equivalently constrained inference problems, by combining these message passing algorithms with the iterative scaling method. Moreover, we build on earlier results and establish global convergence of our algorithms for tree graphs. 
Similar constrained inference problems have previously been studied in \cite{TehWel02}. Interestingly, the algorithm presented therein is comparable to our proposed extension of the BP algorithm. With the connections to MOT, we provide a new motivation for studying this problem, which also leads to a more complete picture of the algorithms.

A promising application of our framework are in the inference problems for collective dynamics, for instance, in the estimation of the behavior of large groups from only aggregate measurements. Such types of filtering methods are crucial for collective dynamics since it is usually impossible to track the trajectories of each single agent in a large population, due to exploding computational complexity, lack of sensor data or for privacy considerations. Related problems have been studied under the name collective graphical models (CGMs) \cite{SheDie11,SheSunKumDie13,SunSheKum15}.
These works consider a large collection of identical graphical models, which are observed simultaneously, and aim to infer aggregate distribution over the nodes. Several heuristic algorithms \cite{SunSheKum15} have been proposed to solve the resulting inference problems.
Our MOT framework suggest a different observation model \cite{HaaRinChe19,SinHaaZha20}, which is reasonable in many scenarios. More importantly, our algorithms, which enjoy global convergence guarantee, provide a reliable machine to estimate collective dynamics in these models.

The rest of this paper is structured as follows. In Section \ref{sec:pre} we review some background knowledge in optimal transport and probabilistic graphical models. In Section \ref{sec:momt_bayes} we provide the main theoretical result in this paper, which is the equivalence between entropy regularized MOT and the inference problem for PGMs. We also modify the belief propagation algorithm to solve MOT problems. Another algorithm based on the norm-product algorithm is introduced in Section \ref{sec:const_norm_prod}. We test and verify our results through several numerical examples in Section \ref{sec:example}. This is followed by a brief concluding remark in Section \ref{sec:conclusion}. 

{\bf Notation:} The notation used throughout is mostly standard. However, with $\exp( \cdot )$, $\ln( \cdot )$, $\odot$, and $./$ we denote the element-wise exponential, logarithm, multiplication, and division of vectors, matrices, and tensors, respectively. Moreover, $\otimes$ denotes the outer product. By $\ett$ we denote a vector of ones, the size of which will be clear from the context. Throughout, we use bold symbols to represent vectors, e.g., $\mb_j, \boldsymbol\mu_j$, and regular symbol for the corresponding entries, e.g., $b_j(x_j), \mu_j(x_j)$.

 \section{Preliminaries}\label{sec:pre}
 In this section, we provide a quick overview of optimal transport theory, probabilistic graphical models and belief propagation algorithm. We only cover material that is most relevant to this work. The reader is referred to \cite{Vil03,WaiJor08,KolFri09} for more details.

\subsection{Optimal transport} \label{sec:omt}
In optimal transport (OT) problems, one seeks an optimal plan that transports mass from a source distribution to a target distribution with minimum cost. In its original formulation \cite{Mon81}, OT was studied over Euclidean space. However, in general, OT problems can be formulated in both continuous space and discrete space. 
In this work, we focus on optimal transport over discrete space. 

Let $\boldsymbol\mu_1\in \mR_+^{d_1},\boldsymbol\mu_2 \in \mR_+^{d_2}$ be two discrete distributions, viz., nonnegative vectors, with equal mass, that is $\sum_{i_1} \mu_1(i_1) = \sum_{i_2} \mu_2 (i_2)$. Here $\mu_1(i)$ denotes the amount of mass in the source distribution at  location $i$ and $\mu_2(i)$ denotes the amount of mass in the target distribution at  location $i$. Without loss of generality, we assume that both $\boldsymbol\mu_1$ and $\boldsymbol\mu_2$ are probability vectors, that is, the total mass is $\sum_{i_1} \mu_1(i_1) = \sum_{i_2} \mu_2 (i_2)=1$. The transport cost of moving a unit mass from point $i_1$ to $i_2$ is denoted by $C(i_1,i_2)$, and collected in the matrix $\bC=[C(i_1,i_2)]\in \mR^{d_1\times d_2}$. In the Kantorovich formulation \cite{Kan42} of OT, the goal is to find a transport plan between the two marginal distributions $\boldsymbol\mu_1$ and $\boldsymbol\mu_2$ that minimizes the total transport cost. A transport plan is encoded in a joint probability matrix $\bB = [B(i_1,i_2)] \in \mR_+^{d_1\times d_2}$ of $\boldsymbol\mu_1, \boldsymbol\mu_2$. Then the total transport cost is $\sum_{i_1,i_2} C(i_1,i_2) B(i_1,i_2)= \tr(\bC^T \bB)$ and therefore the OT problem reads
\begin{equation}\label{eq:omt_bi_discrete}
  \begin{aligned}
    \min_{\bB \in \mR_+^{d_1\times d_2}}~ & \tr(\bC^T \bB) \\
\text{ subject to } & \bB \ett = \boldsymbol\mu_1 \\
& \bB^T \ett = \boldsymbol\mu_2,
\end{aligned}  
\end{equation}
where $\ett$ denotes a vector of ones of proper dimension. The constraints are to enforce that $\bB$ is a joint distribution between $\boldsymbol\mu_1$ and $\boldsymbol\mu_2$. 

Even though the above OT problem \eqref{eq:omt_bi_discrete} is a linear program, in many practical applications it is too difficult to be solved directly using standard solvers due to the large number of variables~\cite{BenCarCut15, ElvHaaJakKar20}, especially in the case where the marginal distributions $\boldsymbol\mu_1,\boldsymbol\mu_2$ come from discretizations of continuous measures. Recently, a regularization of OT \cite{Cut13} was proposed that greatly reduces the computational complexity of (approximately) solving OT problems over discrete space. In this method, an entropy term 
	\begin{equation}\label{eq:entropy2d}
	\cH(\bB) = - \sum_{i_1,i_2} B(i_1,i_2) \ln\, B(i_1,i_2)
	\end{equation}
is added to regularize the problem, leading to
\begin{equation}\label{eq:omt_bi_discrete_regularized}
  \begin{aligned}
    \min_{\bB \in \mR_+^{d_1\times d_2}}~ & \tr(\bC^T \bB) - \epsilon \cH(\bB) \\
\text{ subject to } & \bB \ett = \boldsymbol\mu_1 \\
& \bB^T \ett = \boldsymbol\mu_2,
\end{aligned}  
\end{equation}
where $\epsilon>0$ is a regularization parameter.
The entropy regularized OT problem \eqref{eq:omt_bi_discrete_regularized} is strictly convex and thus the solution is unique. More importantly, it can be solved efficiently via the Sinkhorn algorithm~\cite{Sin64,Cut13}, also known as iterative scaling \cite{DemSte40,FraLor89}. Let $\bK= [K(i_1,i_2)] \in \mR^{d_1\times d_2}$ be defined as $K(i_1,i_2) = \exp(-C(i_1,i_2)/\epsilon)$, then the iterative scaling updates alternate between the two steps
\begin{equation}\label{eq:Sinkhorn2d}
\bu_1 \leftarrow \boldsymbol\mu_1./\bK \bu_2,\quad \bu_2 \leftarrow \boldsymbol\mu_2./ \bK^T \bu_1,
\end{equation}
where $./$ denotes element-wise division. 
The algorithm converges linearly to a unique pair of vectors $\bu_1\in \mR^{d_1}, \bu_2\in \mR^{d_2}$ up to a normalization \cite{FraLor89}. Given the limit point of the iteration, the solution to \eqref{eq:omt_bi_discrete_regularized} has the form 
	\begin{equation}
		\bB = \diag(\bu_1)\bK\diag(\bu_2),
	\end{equation}
that is, $B(i_1,i_2) = K(i_1,i_2) u_1(i_1) u_2(i_2)$ for all $1\le i_1 \le d_1, 1\le i_2 \le d_2$. 
\subsection{Probabilistic graphical models}
A probabilistic graphical model (PGM) is a graph-based representation of a collection of random vectors that captures the conditional dependencies between them. It provides a compact representation of their joint distributions through factorization: a graphical model consists of a collection of distributions that factorize according to an underlying graph structure.
In this work we focus on undirected graphs, which represent Markov random fields \cite{WaiJor08}. Note that directed graphs represent Bayesian networks, and can always be transformed into a Markov random field \cite{KolFri09}.

Among the many representations for Markov random fields, the factor graph representation has been widely used due to its elegance and flexibility \cite{WaiJor08}, and is also used in this paper. Consider a graphical model with underlying factor graph $G=(V,\, F,\,E)$ where $V$ denotes the set of variable nodes, $F$ denotes the set and factor nodes,
and $E$ stands for the edges connecting them (see Figure \ref{fig:factor} for an example). In such a factor graph $G$, the neighbors of a node $j\in V$ consists of factor nodes, $N(j)\subset F$, and the neighbors of a factor node $\alpha\in F$ are variable nodes $N(\alpha)\subset V$. Therefore, $G$ is a bipartite  graph \cite{AsrDenHag98}.
 Each variable node $j\in V$ is associated with a random variable $x_j$ which can be either discrete or continuous. Here we consider only the discrete cases and assume that the random variable $x_j$ can take $d_j$ possible values. Each factor node $\alpha \in F$ corresponds to the dependence between the variable nodes connected to $\alpha$, which are compactly denoted by $\bx_\alpha:=\{x_j~;~ j\in N(\alpha)\}$. 
In Markov random fields with underlying factor graph $G$, the joint probability is assumed to be of the form
	\begin{equation}\label{eq:MRF}
		p(\bx):= p(x_1 , x_2 , \ldots, x_J) = \frac{1}{Z}\prod_{j \in V} \phi_j(x_j) \prod_{\alpha \in F} \psi_\alpha(\mx_\alpha)
	\end{equation}
where $\boldsymbol\phi_j$ is the node/local potential corresponding to node $j$, $\boldsymbol\psi_\alpha$ is the factor node potential corresponding to factor node $\alpha$, and $Z$ is a normalization constant. A factor node potential $\psi_\alpha$ describes the dependence between random variables in $\{x_j~;~ j\in N(\alpha)\}$.
The node potentials normally come from two sources: prior belief and evidence from measurements. In the latter, $\phi_j(x_j)$ is short for $\phi_j(x_j, y_j)$ \cite{YedFreWei03} with $y_j$ being the measurement. In cases where all the local potentials are induced by evidence, a more precise formula for the model is 
	\begin{equation}\label{eq:MRFoutput}
		p(\bx)= \frac{1}{Z}\prod_{j \in V} \phi_j(x_j,y_j) \prod_{\alpha \in F} \psi_\alpha(\mx_\alpha).
	\end{equation}
Since the measurement is usually specified in inference problems, $y_j$ is often neglected to simplify the notation. In principle, the node potentials can be fully absorbed into the factor potentials, that is, the joint distribution becomes $p(\bx)\propto\prod_{\alpha \in F} \psi_\alpha(\mx_\alpha)$. For the ease of presentation, we adopt the formulation \eqref{eq:MRF}.

Apart from factor graphs, another popular representation of PGMs is the standard graph where the nodes are all variables. In the standard graph representation, the dependence between the variables are fully captured by the edges of the graphs. The two representations are equivalent and one can transform one to another easily as the following example illustrates. 
\begin{ex}
The factor graph in Figure~\ref{fig:factor1} models the joint distribution 
	\[
		p(x_1,\ldots,x_6) = \frac{1}{Z} \psi_{\alpha_1} (\bx_{\alpha_1})\psi_{\alpha_2} (\bx_{\alpha_2})\psi_{\alpha_3} (\bx_{\alpha_3})\prod_{j=1}^6 \phi_j(x_j)
	\]
with $\bx_{\alpha_1} = \{x_1,x_2,x_4,x_5\}$, $\bx_{\alpha_2} = \{ x_3,x_5\}$, $\bx_{\alpha_3} = \{x_4,x_6\}$. To convert this into a standard graph representation, the dependence among variables induced by the three factors have to be translated to edges. This is straightforward for factors $\boldsymbol\psi_{\alpha_2}$ and $\boldsymbol\psi_{\alpha_3}$. The factor $\boldsymbol\psi_{\alpha_1}$ involves $4$ variables and is more complicated. Without further assumptions on the structure of this factor, it may induce dependence among all of the $4$ variables, and thus a complete graph connecting them is required (see Figure \ref{fig:factor2}). 
\end{ex}
\begin{figure}[tb]
\centering
\begin{subfigure}{.43\textwidth}
\centering
\includegraphics[scale=1]{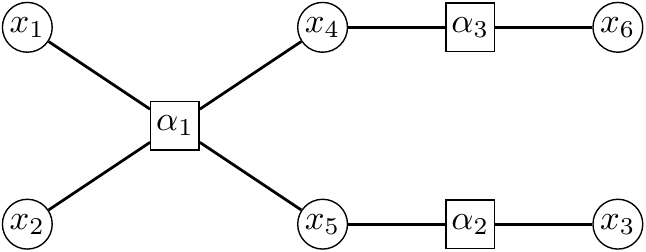}
\caption{}
\label{fig:factor1}
\end{subfigure}\hspace*{0.1cm}
\begin{subfigure}{.43\textwidth}
\centering
\includegraphics[scale=1]{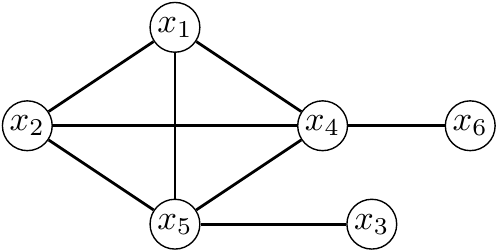}
\caption{}
\label{fig:factor2}
\end{subfigure} 
\caption{Factor graph representation vs standard graph representation.}
\label{fig:factor}
\end{figure}

The two fundamental problems in PGMs are inference and learning. Learning refers to estimating the underlying graphical models (often the parametrized factors) using available data sampled from the models. In inference problems, the parameters of the models are given. Instead, the goal is to infer the statistics of the node variables. The two main approaches to inference problems are 
maximum a posteriori estimation and Bayesian marginal inference \cite{WaiJor08}. 
Given a set of measurements $\{y_1,y_2,\ldots,y_J\}$, the aim of maximum a posteriori estimation is to find the most likely variable value $\{x_1,x_2,\ldots,x_J\}$ given the model and measurements. Instead, Bayesian inference calculates the posterior marginal distributions of each node. The focus of this paper is most relevant to Bayesian/marginal inference.

Formally, given a graphical model \eqref{eq:MRF}, the objective of Bayesian inference is to calculate marginal distributions $p(x_j)$ for $j\in V$. In cases where the nodes variables are discrete, the marginal $p(x_j)$ is defined to be 
	\begin{equation}\label{eq:inference}
		p(x_j) = \sum_{x_1,\ldots,x_{j-1},x_{j+1},\ldots,x_J} p(\bx) = \frac{1}{Z}\sum_{x_1,\ldots,x_{j-1},x_{j+1},\ldots,x_J} \prod_{i \in V} \phi_i(x_i) \prod_{\alpha \in F} \psi_\alpha (\mathbf{x}_\alpha).
	\end{equation}
The Bayesian inference problem can be reformulated as the optimization problem of minimizing 
	\begin{equation} \label{eq:free_energy}
         	\min_\mb \cF(\mb) := \cU(\mb) - \cH(\mb),
	\end{equation}
with 
	\begin{equation}\label{eq:var_av_energy}
        		\cU(\mb) = -\sum_\bx b(\bx) \left(\sum_{j \in V} \ln \phi_j(x_j)+\sum_{\alpha \in F} \ln\psi_\alpha (\mathbf{x}_\alpha)\right)
        \end{equation}
and
\begin{equation}\label{eq:var_entropy}
    \cH(\mb) = - \sum b(\bx)~ \ln~ b(\bx)
\end{equation}
over the space of all the probability distributions on $\bx$. By convention, $\cF, \cU, \cH$ are known as free energy, average energy and entropy respectively due to their similarity to thermodynamics \cite{Atk10}. To see this, we note that
the Kullback-Leibler (KL) divergence \cite{KulLei51} between $b(\bx)$ and $p(\bx)$ is 
	\[
		{\rm KL}(\mb\mid \bp) := \sum_\bx b(\bx) \ln \frac{b(\bx)}{p(\bx)} = \sum_\bx b(\bx) \ln \frac{b(\bx)}{1/Z\prod_{j \in V} \phi_j(x_j)\prod_{\alpha \in F} \psi_\alpha (\mathbf{x}_\alpha)}=
		F(\mb) +\ln Z.
	\]
Since the KL divergence is nonnegative and equals $0$ only if $\mb=\bp$, the unique minimizer of $\cF(\mb)$ is $\bp$ with the associated minimum being $-\ln Z$. 
The optimization formulation \eqref{eq:free_energy} of Bayesian inference is the basis for variational inference \cite{WaiJor08}, one of the most popular approximate inference techniques. In the variational inference approach, the approximate distribution $\mb$ is usually assumed to have some simple structure to ease the optimization, e.g., the mean field approximation $b(\bx)=b_1(x_1)\cdots b_J(x_J)$ \cite{JorGhaJaaSau99}. This work is not concerned with variational inference; \eqref{eq:free_energy} simply serves as a link to connect Bayesian inference with optimization. 
In the PGM literature \cite{WeiYanMel07,HazSha10}, it is common to introduce a temperature coefficient $\epsilon>0$ into \eqref{eq:free_energy}, which leads to a slightly more general optimization problem 
\begin{equation}\label{eq:variational_pgm}
    \min_\mb~ \cF(\mb) = \cU(\mb) - \epsilon \cH(\mb).
\end{equation}
It corresponds to the Bayesian inference for the model 
	\[
		p_\epsilon(\bx) = \frac{1}{Z} \prod_{j \in V} \phi_j(x_j)^\epsilon \prod_{\alpha \in F} \psi_\alpha (\mathbf{x}_\alpha)^\epsilon.
	\]
From an optimization point of view, \eqref{eq:variational_pgm} is a regularized version of the linear program
	\[
		\min_\mb~ \cU(\mb).
	\]
Interestingly, this linear program in fact corresponds to the maximum a posteriori problem \cite{HazSha10} for the model \eqref{eq:MRF}.

\subsection{Belief Propagation} \label{sec:bp}
In principle, Bayesian inference is achievable through the definition \eqref{eq:inference} by calculating the marginals using brute force summation.
The complexity of this summation however scales exponentially as the number of variable nodes $J$ goes up \cite{YedFreWei03}. Also the normalization factor $Z$ is extremely difficult to calculate when $J$ is large due to the same reason. 


During the last two decades, many methods have been developed to solve or approximately solve Bayesian marginal inference problems. 
One of the most widely used methods is a message-passing algorithm called Belief Propagation~\cite{Pea88}.
It updates the marginal distribution of each node through communications of beliefs/messages between them. 
In the factor graph representation, it reads 
	\begin{subequations}\label{eq:BP}
	\begin{eqnarray}
	m_{\alpha\rightarrow j} (x_j) &\propto& \sum_{\bx_\alpha\backslash x_j} \psi_\alpha(\bx_\alpha) 
	\prod_{i\in N(\alpha)\backslash j}n_{i\rightarrow \alpha}(x_i)
	\\
	n_{j\rightarrow \alpha}(x_j) &\propto& \phi_j(x_j) \prod_{\beta \in N(j)\backslash \alpha} m_{\beta\rightarrow j}(x_j),
	\end{eqnarray}
	\end{subequations}
where $m_{\alpha\rightarrow j} (x_j)$ denotes the message from factor node $\alpha$ to variable node $j$, and $n_{j\rightarrow \alpha}(x_j)$ represents the message from variable node $j$ to factor node $\alpha$. The symbol $\propto$ means ``proportional to'' and indicates that often a normalization is applied in the Belief Propagation algorithm. The messages in \eqref{eq:BP} are updated iteratively over the factor graph.
	
The Belief Propagation algorithm was first invented to solve Bayesian inference program over trees, in which case global convergence is guaranteed \cite{Pea88,YedFreWei03}. This method was later generalized to deal with inference problems involving general graphs under the name Loopy Belief Propagation \cite{YedFreWei01}. Even though there is no convergence proof and the algorithm does diverge in some occasions, it works well in practice and is widely adopted. 
When the algorithm converges, one can calculate the beliefs on the variables and factors by
	\begin{subequations}\label{eq:belief}
	\begin{eqnarray}
	b_j(x_j) &\propto& \phi_j(x_j) \prod_{\alpha \in N(j)} m_{\alpha\rightarrow j} (x_j)
	\\
	b_\alpha (\bx_\alpha) &\propto& \psi_\alpha(\bx_\alpha) \prod_{j\in N(\alpha)} n_{j\rightarrow \alpha} (x_j).
	\end{eqnarray}
	\end{subequations}
In cases where the factor graph has no cycles (i.e., it is a tree), the beliefs in \eqref{eq:belief} coincide with the true posterior marginals, that is, 
	\begin{subequations}\label{eq:marginalbelief}
	\begin{eqnarray}
		p(x_j) &=& b_j(x_j),~~\forall j \in V,~\forall x_j
		\\
		p(\bx_\alpha) &=& b_\alpha (\bx_\alpha),~~\forall \alpha\in F, ~\forall \bx_\alpha.
	\end{eqnarray}
	\end{subequations}
	
For general graphs with cycles, convergence is not guaranteed and even if it does converge, the beliefs in \eqref{eq:belief} are only approximations of the true marginals $p(x_j),\,p(\bx_\alpha)$. A remarkable discovery \cite{YedFreWei01,YedFreWei05} related to (Loopy) Belief Propagation is that if the updates \eqref{eq:BP} converge, then the beliefs in \eqref{eq:belief} form a fixed point of the Bethe free energy \cite{YedFreWei01,YedFreWei05} 
\begin{equation}\label{eq:Bethe_energy}
    \cF_{\rm Bethe}(\mb) =  \cU_{\rm Bethe}(\mb) - \cH_{\rm Bethe}(\mb),
    \end{equation}
where $\cU_{\rm Bethe}(\mb)$ is the Bethe average energy
\begin{equation}\label{eq:Bethe_average_ebergy}
    \cU_{\rm Bethe}(\mb) = - \sum_{\alpha \in F} \sum_{\bx_\alpha} b_\alpha(\bx_\alpha) \ln \psi_\alpha(\bx_\alpha)-\sum_{j\in V}\sum_{x_j} b_j(x_j)\ln \phi_j(x_j)
\end{equation}
and $\cH_{\rm Bethe}(\mb)$ is the Bethe entropy 
\begin{equation}\label{eq:Bethe_entropy}
    \cH_{\rm Bethe}(\mb) = - \sum_{\alpha \in F} \sum_{\bx_\alpha} b_\alpha(\bx_\alpha) \ln b_\alpha(\bx_\alpha) + \sum_{j \in V} (N_j - 1) \sum_{x_j} b_j(x_j) \ln b_j(x_j)
\end{equation}
with $N_j$ denoting the degree of the variable node $j$, i.e., $N_j=|N(j)|$. In \eqref{eq:Bethe_energy}, we define $\mb=\{\mb_j,\,\mb_\alpha~:~ j\in V,\,\alpha\in F\}$. This is different to $\mb$ in \eqref{eq:free_energy}, which is a $J$-mode tensor. For the sake of conciseness, by abuse of notation, we use $\mb$ in both settings. For a factor tree, the two are connected through the relation $b(\bx)\sim (\prod_{\alpha\in F} b_\alpha(\bx_\alpha)) (\prod_{j\in V} b_j(x_j)^{1-N_j})$ \cite{WaiJor08}. In terms of Bethe free energy, the Bayesian inference problem reads
	\begin{subequations}\label{eq:Betheinference}
	\begin{eqnarray}
	 \min_{\mb} &&\cF_{\rm Bethe}(\mb) \label{eq:Betheinferencea}
    \\ 
   \text{subject to} && \sum_{\bx_\alpha \backslash x_j} b_\alpha(\bx_\alpha) = b_j(x_j), \quad  \forall j \in V, \alpha \in N(j), \label{eq:Betheinferenceb}
   \\&& \sum_{\bx_\alpha} b_\alpha(\bx_\alpha)= 1, \quad  \forall \alpha \in F. \label{eq:Betheinferencec}
	\end{eqnarray}
	\end{subequations}
The constraint \eqref{eq:Betheinferenceb} is to ensure that $\mb_\alpha, \mb_j$ are compatible and \eqref{eq:Betheinferencec} is to guarantee that they are in the probability simplex. It is easy to check that when the factor graph has no cycles, the Bethe free energy \eqref{eq:Bethe_energy} is strictly convex in the feasible set defined by the constraints \eqref{eq:Betheinferenceb}-\eqref{eq:Betheinferencec}, and is equal to the free energy \eqref{eq:free_energy}, i.e., $\cF_{\rm Bethe} = \cF$. Thus, \eqref{eq:Betheinference} is again a convex optimization problem. For general graphs, the Bethe free energy serves as a good approximation of the free energy \cite{YedFreWei05}, but is no longer convex.

\section{Multimarginal optimal transport as Bayesian inference}
\label{sec:momt_bayes}
%

Multimarginal optimal transport (MOT) extends the OT framework \eqref{eq:omt_bi_discrete} to the setting involving multiple distributions. In particular, in MOT, one aims to find a transport plan among a set of marginals $\boldsymbol\mu_1,\dots,\boldsymbol\mu_J$ with $J\geq 2$. 
In this setting, the transport cost is encoded in a tensor $\bC = [C(i_1,i_2,\ldots, i_J)]\in \mR^{d_1\times d_2\times\cdots\times d_J}$ where $C(i_1,i_2,\ldots, i_J)$ denotes the unit transporting cost corresponding to the locations $i_1, i_2,\ldots, i_J$, and the transport plan is in the same way described by a $J$-mode tensor  $\bB \in \mR_+^{d_1\times d_2\times\cdots\times d_J}$.

For a given transport plan $\bB$, the total cost of transportation is 
	\[
		\langle \bC, \bB\rangle := \sum_{i_1,i_2,\ldots, i_J} C(i_1,i_2,\ldots, i_J) B(i_1,i_2,\ldots, i_J).
	\]
Thus, similar to \eqref{eq:omt_bi_discrete}, MOT has a linear programming formulation
\begin{equation} \label{eq:omt_multi_discrete}
\begin{aligned}
\min_{\bB \in \mR_+^{d_1\times \dots \times d_J}} & \langle \bC, \bB \rangle  \\
\text{ subject to } & P_j (\bB) = \boldsymbol\mu_j,  \text { for } j \in \Gamma,
\end{aligned}
\end{equation}
where $\Gamma\subset \{1,2,\dots,J\}$ is an index set specifying which marginal distributions are given, and the projection on the $j$-th marginal of $\bB$ is computed as
\begin{equation} \label{eq:proj_discrete}
P_j(\bB) = \sum_{i_1,\dots,i_{j-1},i_{j+1},i_J} B (i_1,\dots,i_{j-1},i_j,i_{j+1},\dots,i_J).
\end{equation}
Note that the standard bi-marginal OT problem \eqref{eq:omt_bi_discrete} is a special case of the MOT problem \eqref{eq:omt_multi_discrete} with $J=2$ and $\Gamma=\{1,2\}$.

In the original MOT formulation \cite{GanSwi98,Pas15}, constraints are given on all the marginal distributions, viz., the index set $\Gamma = \{1,2,\dots,J\}$. However, in many applications \cite{Pas15,Nen16,HaaRinChe20}, only a subset of marginal distributions are explicitly given. For instance, the Barycenter problem \cite{AguCar11} is a MOT where the target distribution is not given.
In this work we consider the setting where constraints are only imposed on a subset of marginals, i.e., $\Gamma \subset \{1,2,\dots,J\}$.

\subsection{Entropy regularized MOT}
Although MOT \eqref{eq:omt_multi_discrete} is a standard linear program, its complexity grows exponentially as $J$ increases. 
This computational burden can be partly alleviated in an analogous manner as for the classical bi-marginal problem \eqref{eq:omt_bi_discrete}, which again yields an 
iterative scaling algorithm.
In particular, similarly to \eqref{eq:omt_bi_discrete_regularized}, one can add an entropy term
\begin{equation}
\cH(\bB) = - \sum_{i_1,\dots,i_J} B(i_1,\dots,i_J)  \ln~B(i_1,\dots,i_J)
\end{equation}
to \eqref{eq:omt_multi_discrete} to regularize the problem, resulting in the strictly convex optimization problem
\begin{equation} \label{eq:omt_multi_regularized}
\begin{aligned}
\min_{\bB \in \mR^{d_1\times \dots \times d_J}} & \langle \bC, \bB \rangle - \epsilon \cH(\bB) \\
\text{ subject to } & P_j (\bB) = \boldsymbol\mu_j,  \text { for } j \in \Gamma
\end{aligned}
\end{equation}
with $\epsilon>0$ being a regularization parameter. 

For the bi-marginal case, \eqref{eq:omt_multi_regularized} reduces to problem~\eqref{eq:omt_bi_discrete_regularized}. The iterative scaling algorithm \eqref{eq:Sinkhorn2d} can be generalized to the multi-marginal setting \cite{FraLor89} in order to solve \eqref{eq:omt_multi_regularized}.
From an optimization perspective, the iterative scaling algorithm amounts to a coordinate ascent method \cite{Wri15} in the dual problem of \eqref{eq:omt_multi_regularized}. 
The introduction of the entropy term in \eqref{eq:omt_multi_regularized} allows for closed-form expressions for the updates of the dual variables \cite{ElvHaaJakKar20}. 
Utilizing Lagrangian duality theory, one can show that the optimal solution to \eqref{eq:omt_multi_regularized} is of the form
\begin{equation}
\bB = \bK \odot \bU,
\end{equation}
where $\odot$ denotes element-wise multiplication and the tensors are given by 
	\begin{equation}\label{eq:K}
		\bK = \exp(- \bC/\epsilon)
	\end{equation}
and 
    \begin{equation}\label{eq:U}
    \bU= \bu_1 \otimes \bu_2 \otimes \dots \otimes \bu_J,
    \end{equation}
where the vectors $\bu_j \in \mathbb{R}^{d_j}$ are given by
\begin{equation} \label{eq:u_multi_omt}
\bu_j = \begin{cases} \exp\left( -\frac{1}{J} - \frac{\boldsymbol\lambda_j}{\epsilon} \right),& \text{ if } j \in \Gamma \\
\exp\left( -\frac{1}{J} \right) \ett,& \text{ otherwise,} \end{cases}
\end{equation}
and $\boldsymbol\lambda_j\in \mathbb{R}^{d_j}$ is the dual variable corresponding to the constraint $P_j (\bB) = \boldsymbol\mu_j$ on the $j$-th marginal.
Moreover, the dual of \eqref{eq:omt_multi_regularized} is
\begin{equation} \label{eq:multi_omt_dual}
\max_{\{\boldsymbol\lambda_j, j\in \Gamma\}}  -\epsilon \langle\bK, \bU \rangle - \sum_{j \in \Gamma} \boldsymbol\lambda_j^T \boldsymbol\mu_j.
\end{equation}
We emphasis that in \eqref{eq:multi_omt_dual}, $\bU$ is a function of the multipliers $\{\boldsymbol\lambda_j, j\in \Gamma\}$ as defined in \eqref{eq:U}-\eqref{eq:u_multi_omt}.

The iterative scaling algorithm iteratively updates the vectors $\bu_j$, for $j\in\Gamma$, in \eqref{eq:u_multi_omt} according to
\begin{equation} \label{eq:sinkhorn_multi}
\bu_j \leftarrow \bu_j \odot \boldsymbol\mu_j ./ P_j(\bK \odot \bU),
\end{equation}
for all $j\in\Gamma$. 
For future reference, we summarize the steps in Algorithm~\ref{alg:sinkhorn}. 
\begin{algorithm*}[tb]
   \caption{Iterative Scaling Algorithm for MOT}
   \label{alg:sinkhorn}
\begin{algorithmic}
    \STATE Compute $\bK=\exp(- \bC/\epsilon)$
   \STATE Initialize $\bu_1, \bu_2, \ldots, \bu_J$ to $\exp(-\frac{1}{J})\mathbf{1}$
   \WHILE{not converged}
   \FOR{$j \in \Gamma$}
        \STATE Compute $\bU= \bu_1 \otimes \bu_2 \otimes \dots \otimes \bu_J$
        \STATE Update $\bu_j$ as $\bu_j \leftarrow \bu_j \odot \boldsymbol\mu_j ./ P_j(\bK \odot \bU)$
    \ENDFOR
    \ENDWHILE
\end{algorithmic}
\end{algorithm*}
The Iterative Scaling algorithm (Algorithm~\ref{alg:sinkhorn}) is a special case of the iterative Bregman projection algorithm \cite{BauLew00,BenCarCut15}, which itself is a special case of a dual block coordinate ascent method \cite{Tse90,LuoTse93}, and thus enjoys a global convergence guarantee \cite{BauLew00,LuoTse93}.

Note that the standard Sinkhorn iterations \eqref{eq:Sinkhorn2d}
for the two-marginal case \eqref{eq:omt_bi_discrete_regularized} is a special case of Algorithm~\ref{alg:sinkhorn} when $J=2$ and $\Gamma=\{1, 2\}$. Indeed, in this case, Algorithm \ref{alg:sinkhorn} boils down to iterating
	\[
		\bu_1 \leftarrow \bu_1 \odot \boldsymbol\mu_1 ./ P_1(\bK \odot \bU), \qquad \bu_2 \leftarrow \bu_2 \odot \boldsymbol\mu_2 ./ P_2(\bK \odot \bU).
	\]
With $P_1(\bK \odot \bU) = \diag(\bu_1)\bK\diag(\bu_2)\ett = \diag(\bu_1) (\bK \bu_2)=\bu_1\odot (\bK\bu_2)$ and similarly $P_2(\bK \odot \bU)  
= \bu_2\odot(\bK^T \bu_1)$, it follows
	\[
		\bu_1 \leftarrow \boldsymbol\mu_1./\bK\bu_2, \qquad 
		\bu_2  \leftarrow  \boldsymbol\mu_2./\bK^T\bu_1,
	\]
which coincide with \eqref{eq:Sinkhorn2d}. 

Although Algorithm~\ref{alg:sinkhorn} is easy to implement and considerably faster than general linear programming solvers, its complexity still scales exponentially as $J$ grows since the number of elements in $\bB$ are $d_1d_2\ldots d_J$. The computational bottleneck of it lies in the calculation of the projections $P_j(\bB)$, for $j \in \Gamma$, in \eqref{eq:proj_discrete}. Generally, this computational burden is inevitable. However, in some cases it is possible to utilize structures in the cost tensor $\bC$ to make the computation of the projections more accessible \cite{BenCarCut15, ElvHaaJakKar20, HaaRinChe20}. In Section~\ref{sec:GraphMOT} we consider MOT problems with cost tensors that can be decomposed according to a graph. This graphical structure allows us to leverage the Bayesian inference tools \cite{KolFri09} in PGMs to compute the projections efficiently. Other than providing a workhorse for solving MOT problems with graphical structured cost, this connection between MOT and PGMs also presents new elements and perspective to Bayesian inference in PGMs, which is discussed in details in Section \ref{subsec:MOT_trees}. 

\subsection{MOT with graphical structures}
\label{sec:GraphMOT}
Consider the cases where the cost tensor $\bC$ can be decomposed according to a factor graph. More specifically, the cost tensor $\bC$ has the form
\begin{equation}\label{eq:cost_structure}
    C(\bx) = \sum_{\alpha\in F} C_{\alpha}(\bx_\alpha),
\end{equation}
where $F$ denotes the set of factors of a graph. Here, to be consistent with the notations in PGMs, we write the cost of associating $i_1, i_2, \ldots, i_J$ by $C(\bx)=C(x_1,x_2,\ldots, x_J)$ instead of $C(i_1, i_2, \ldots, i_J)$, but the two have exactly the same meaning; both $x_j$ and $i_j$ take values in a set with $d_j$ elements. Thus, by abuse of notation, we use $C(\bx)$ and $C(i_1, i_2, \ldots, i_J)$ interchangeably. 

A graph structured cost tensor \eqref{eq:cost_structure} occurs in various applications of the OT framework \cite{BenCarCut15, ElvHaaJakKar20}. For instance, in Barycenter problems \cite{AguCar11}, the cost $\bC$ can be decomposed into the sum of pairwise costs between the target distribution and each given marginal distribution. For general cost functions, it might be possible to approximate them using the structured cost \eqref{eq:cost_structure}. Thus the framework we establish can also be viewed as an efficient method to approximate the solution to any MOT problem. 

Denote the factor graph associated with the cost \eqref{eq:cost_structure} by $G=(V, F, E)$. Then the $j$-th mode of $\bC$ corresponds to node $j\in V$ and the marginal distribution of the $j$-th mode is the same as the marginal distribution of $x_j$ at node $j$. In this paper, we only consider the cases where the factor graph $G$ is connected but does not have any loop, that is, $G$ is a factor tree. We associate the cost $\bC$ with a probabilistic graphical model
	\[
		p(\bx) = \frac{1}{Z} \prod_{\alpha\in F} K_\alpha(\bx_\alpha)
	\]
where 
	\begin{equation}\label{eq:Kalpha}
		K_\alpha(\bx_\alpha) = \exp (-C_\alpha(\bx_\alpha)/\epsilon).
	\end{equation}
Clearly, $\bK$ in \eqref{eq:K} has the form 
	\[
		\bK = [K(i_1,i_2,\ldots,i_J)]= [K(\bx)] = [\prod_{\alpha\in F} K_\alpha(\bx_\alpha)],
	\]
and 
	\begin{equation}\label{eq:MGM}
		\bK\odot\bU = [K(\bx)U(\bx)] = [\left(\prod_{\alpha\in F} K_\alpha(\bx_\alpha)\right)\left(\prod_{j\in V} u_j(x_j)\right)].
	\end{equation}
From a PGM point of view, the (transformed) Lagrangian multipliers $\bu_j$, for $j\in\Gamma$, introduced by the Iterative Scaling algorithm are local potentials of the modified graphical model $K(\bx)U(\bx)$. The Lagrangian approach of solving the constrained optimization problem \eqref{eq:omt_multi_regularized} seeks multipliers $\bu_j$, for $j\in\Gamma$, such that the tensor $\bB=\bK\odot\bU$ satisfies all the constraints $P_j(\bB) = \boldsymbol\mu_j$, for $j\in\Gamma$. Thus, in the language of PGMs, to solve the MOT problem \eqref{eq:omt_multi_regularized}, one can search for a proper set of artificial local potentials $\bu_j$, for $j\in\Gamma$, such that the modified graphical model $K(\bx)U(\bx)$ in \eqref{eq:MGM} has the specified marginal distribution $\boldsymbol\mu_j$ on the $j$-th variable node for each $j\in \Gamma$. Note that $\bu_j = \exp(-1/J)\mathbf{1}$ is a uniform potential for all $j\notin \Gamma$ and thus does not affect the graphical model $\bK\odot\bU$. 

For fixed multipliers $\bu_1, \bu_2, \ldots, \bu_J$, calculating (with proper normalization) the projection $P_j(\bK \odot \bU)$ is exactly a Bayesian inference problem of inferring the $j$-th variable node over the modified graphical model $K(\bx)U(\bx)$. When $G$ does not have any loops, a condition we assume throughout, Bayesian inference can be achieved efficiently using the Belief Propagation algorithm. 
Generally, the marginal constraints $P_j(\bB)=\boldsymbol\mu_j$ can be imposed on any variable node $j\in V$. However, a marginal constraint on a non-leaf node will decompose the MOT problem \eqref{eq:omt_multi_regularized} into several independent MOT problems with constraints only on leaf nodes, see \cite{HaaRinChe20}. Thus, without loss of generality, we assume marginal constraints on leaf nodes only, that is, $\Gamma\subset L$ where $L\subset V$ denotes the set of leaf nodes of $G$.

\begin{ex}
Figure~\ref{fig:general_factor_clean} depicts a factor graph with leaf nodes $L = \{1,2,3,6 \}$. The shaded nodes in the figure represent the fixed distribution variables, thus, in this example, $\Gamma= \{1, 2, 3\} \subset L$. 
\end{ex}
\begin{figure}[tb]	
\centering
\includegraphics[scale=0.9]{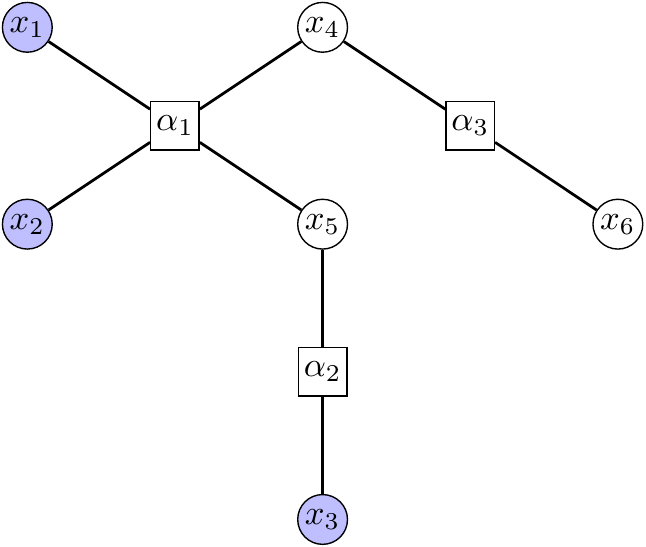}
\caption{Factor graph with some marginal distribution constraints on nodes $\{x_1,x_2,x_3\}$.}
\label{fig:general_factor_clean}
\end{figure}

Leveraging the graphical structure \eqref{eq:cost_structure} of the cost, based on the discussions above, we obtain a simple strategy to solve the MOT problem \eqref{eq:omt_multi_regularized}: We apply the Iterative Scaling algorithm and utilize the Belief Propagation algorithm to carry out the computation of $P_j(\bK \odot \bU)$ with the current multiplier $\bU$. The acceleration is tremendous for MOT problems with a large number of marginals; the Belief Propagation algorithm scales well for large problem while the complexity of the brute force projection using definition \eqref{eq:proj_discrete} grows exponentially as the dimension increases.
It turns out that some more tricks can be adopted to further improve the speed of the projection step $P_j(\bK \odot \bU)$. The full algorithm, which we call Iterative Scaling Belief Propagation (ISBP) algorithm will be presented and discussed in details in Section \ref{sec:ISBP}. 


\subsection{MOT and Bayesian inference}
\label{subsec:MOT_trees}

In the previous section, we have seen that in cases where the cost tensor $\bC$ in MOT problem \eqref{eq:omt_multi_regularized} has a graphical structure, one can take advantage of PGM methods, in particular the Belief Propagation algorithm, to accelerate the Iterative Scaling algorithm. In this section, we establish further connections between MOT and PGMs. These links add novel components to both the MOT theory and PGM theory. These connections also bring new insight and interpretation of Iterative Scaling Belief Propagation.

%
%
%

Clearly, the objective function of the entropy regularized MOT problem \eqref{eq:omt_multi_regularized} is exactly the free energy $\cF$ in \eqref{eq:variational_pgm} with 
	\begin{equation}\label{eq:psiphi}
		\psi_\alpha (\mx_\alpha)=\exp (-C_\alpha(\mx_\alpha)),\ \forall \alpha\in F,\forall \mx_\alpha \qquad \phi_j(x_j) = 1,\ \forall j\in V,\forall x_j.
	\end{equation}
Thus, the MOT problem \eqref{eq:omt_multi_regularized} can be written as 
\begin{equation}\label{eq:momt_free_energy}
\begin{aligned}
     \min_{\bB\in\mR_+^{d_1\times \dots \times d_J}} &~  \cF(\bB) \\ 
     \text{subject to} &~ P_j(\bB) =  \boldsymbol\mu_j,\quad \forall j \in \Gamma.
\end{aligned}
\end{equation}
Therefore, the entropic regularized MOT problem~\eqref{eq:omt_multi_regularized} with cost function that decouples according to a graph structure as in \eqref{eq:cost_structure} is equivalent to a Bayesian inference problem in a PGM with additional constraints on the marginal distributions of a set of variable nodes. In other words, \eqref{eq:momt_free_energy} is a constrained version of a Bayesian inference problem.

On the other hand, any Bayesian inference problem in a PGM can be rewritten in the constrained form \eqref{eq:momt_free_energy}. More specifically, consider the problem of inferring the posterior distribution of 
	\[
		\frac{1}{Z} \prod_{j \in \Gamma} \phi_j(x_j,y_j) \prod_{\alpha \in F} \psi_\alpha (\mathbf{x}_\alpha),
	\]
where $y_j$ is the observation associated with the variable node potential $\phi_j$. When the observations are fixed, say $y_j = \hat y_j$, then the standard Bayesian inference method replaces $\phi_j(x_j,y_j)$ by a local potential $\phi_j(x_j)$ and infers the marginal distributions of the resulting graphical model with only the nodes $\{x_j,\,\mx_\alpha\}$. Alternatively, the measurement $y_j = \hat y_j$ can be viewed as constraints on the node $y_j$ of an augmented graphical model which includes also the observation variable node $y_j$. In particular, the constraint is of the form $p(y_j) = \delta (y_j-\hat y_j)$, where $\delta$ denotes Dirac distribution. Thus, the posterior distribution can also be obtained by solving the constrained Bayesian inference problem \eqref{eq:momt_free_energy} over the augmented graphical models under the constraints that $p(y_j) = \delta (y_j-\hat y_j)$, for $j\in \Gamma$. This equivalence is illustrated in Figure \ref{fig:constrainedinference}. 
Therefore, from this point of view, the constrained Bayesian inference problem \eqref{eq:momt_free_energy} can also be viewed as a generalization of standard Bayesian inference. 

\begin{figure}[tb]
\centering
\begin{subfigure}{.43\textwidth}
\centering
\includegraphics[scale=0.9]{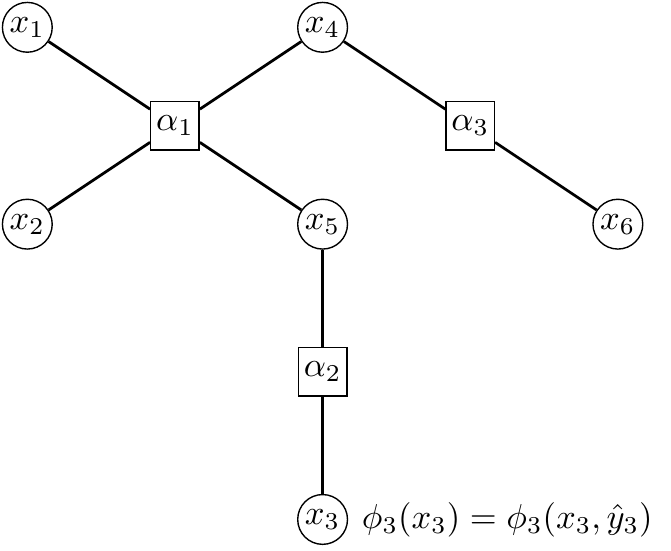}
\caption{Standard}
\end{subfigure} \hspace*{0.5cm}
\begin{subfigure}{.43\textwidth}
\centering
\includegraphics[scale=0.9]{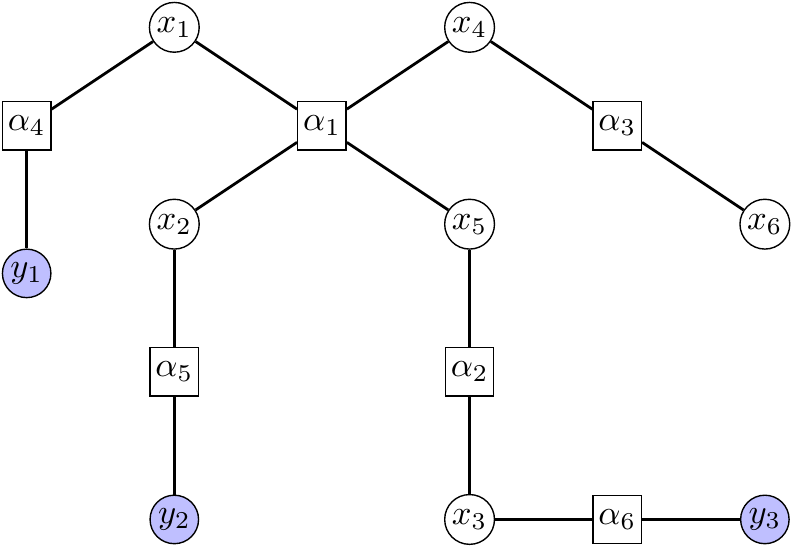}
\caption{Constrained}
\end{subfigure}
\caption{Equivalence between standard Bayesian inference and constrained Bayesian inference: (a) The local potentials of variables $x_1,~x_2,~x_3$ are induced by measurements $y_1=\hat y_1,~y_2=\hat y_2,~y_3=\hat y_3$ respectively, namely, $\phi_1(x_1) = \phi_1(x_1,\hat y_1),~ \phi_2(x_2) = \phi_2(x_2,\hat y_2), ~\phi_3(x_3) = \phi_3(x_3,\hat y_3)$; (b) The graph is augmented by the nodes $y_1,~y_2,~y_3$ and factors
$\psi_{\alpha_4} = \phi_1(x_1,y_1)$, $\psi_{\alpha_5} = \phi_2(x_2,y_2)$, and $\psi_{\alpha_6} = \phi_3(x_3,y_3)$. The measurements become marginal constraints $p(y_1)=\delta(y_1-\hat y_1), ~p(y_2)=\delta(y_2-\hat y_2), ~p(y_3)=\delta(y_3-\hat y_3)$.
}
\label{fig:constrainedinference}
\end{figure}

When the underlying factor graph associated with the cost function \eqref{eq:cost_structure} is in fact a factor tree, the free energy $\cF(\bB)$ is equal to the Bethe free energy (see Section \ref{sec:bp})
\begin{equation}\label{eq:Bethe_energy_momt}
    \cF_{\rm Bethe}(\bB) = - \sum_{\alpha \in F} \sum_{\bx_\alpha} B_\alpha(\bx_\alpha) \ln \psi_\alpha(\bx_\alpha) +\epsilon\,\sum_{\alpha \in F} \sum_{\bx_\alpha} B_\alpha(\bx_\alpha) \ln B_\alpha(\bx_\alpha)- \epsilon\,\sum_{j \in V} (N_j - 1) \sum_{x_j} B_j(x_j) \ln B_j(x_j),
    \end{equation}
where $\bB_\alpha$ is the marginal distribution on factor node $\alpha$ and $\bB_j$ denotes the marginal on variable node $j$, namely, $B_\alpha(\mx_\alpha) = \sum_{\mx\backslash \mx_\alpha} B(\mx)$, and $B_j(x_j) =  \sum_{\mx\backslash x_j} B(\mx)$. In \eqref{eq:Bethe_energy_momt}, $\bB$ is the collection of marginal distributions, that is, $\bB=\{\bB_j, \bB_\alpha~:~ j\in V, \alpha\in F\}$. This is different from the $J$-mode tensor $\bB$ in \eqref{eq:momt_free_energy}. However, with slight abuse of notation, we use the same symbol $\bB$ in both contexts. Again, due to tree structure, the two have a one-to-one correspondence with each other.

The marginals $\bB_\alpha$ and $\bB_j$ capture only local information around a factor variable or node variable and they have to satisfy certain conditions in order to be feasible marginal distributions of some joint distribution. In particular, they have to be compatible in the sense 
	\[
		\sum_{\bx_\alpha \backslash x_j} B_\alpha(\bx_\alpha) = B_j(x_j),\quad \forall j \in V,\, \forall x_j.
	\]
In terms of local marginals, the constraints in \eqref{eq:momt_free_energy} read
	\[
		B_j(x_j) = \mu_j (x_j),\quad \forall j\in \Gamma,\, \forall x_j.
	\]
Therefore, the MOT problem \eqref{eq:momt_free_energy} can be reformulated as
\begin{subequations}\label{eq:momt_bethe}
\begin{eqnarray}
    \min_{\bB} &&\cF_{\rm Bethe}(\bB) \label{eq:momt_bethe_a}
    \\
   \mbox{subject to} && B_j(x_j) = \mu_j(x_j), \quad \forall j \in \Gamma,~\forall x_j \label{eq:momt_bethe_b} 
   \\
   && \sum_{\bx_\alpha \backslash x_j} B_\alpha(\bx_\alpha) = B_j(x_j), \quad \forall j \in V, \alpha \in N(j),~\forall x_j \label{eq:momt_bethe_c} 
   \\ \label{eq:momt_bethe_d} &&\sum_{\bx_\alpha} B_\alpha(\bx_\alpha)= 1, \quad \forall \alpha \in F, 
\end{eqnarray}
\end{subequations}
where the last constraint \eqref{eq:momt_bethe_d}  is to ensure that the optimization variables $\{\bB_j, \bB_\alpha~:~ j\in V, \alpha\in F\}$ are in the probability simplex. Since the Bethe free energy is convex for factor trees, and the constraints are linear, Problem \eqref{eq:momt_bethe} is a convex optimization problem. One advantage of \eqref{eq:momt_bethe} over \eqref{eq:momt_free_energy} is that the size of optimization variables in \eqref{eq:momt_bethe} is considerably smaller than that in \eqref{eq:momt_free_energy}. More specifically, the optimization variables of \eqref{eq:momt_bethe} are local marginals which are either vectors $\bB_j$ or low-dimensional tensors $\bB_\alpha$, which is in contrast to the high-dimensional $J$-mode tensor $\bB$ in \eqref{eq:momt_free_energy}.

\subsection{Iterative Scaling Belief Propagation algorithm}\label{sec:ISBP}
In this section, we present the full Iterative Scaling Belief Propagation algorithm for the entropy regularized MOT problem \eqref{eq:omt_multi_regularized} (or equivalently \eqref{eq:momt_free_energy} and \eqref{eq:momt_bethe}). We start with a characterization of the solution to \eqref{eq:momt_bethe}. 


\begin{thm} \label{thm:is_bp}
The solution to the MOT problem~\eqref{eq:momt_bethe} is given by
\begin{subequations} \label{eq:solutionBethe}
\begin{eqnarray}\label{eq:solutionBethe1}
    B_\alpha (\bx_\alpha)  &\propto& K_\alpha (\bx_\alpha) \prod_{j \in N(\alpha)} n_{j\rightarrow \alpha}(x_j), \quad \forall \alpha \in F 
	\\\label{eq:solutionBethe2}
   B_j (x_j)  &\propto&  \prod_{\alpha \in N(j)}  m_{\alpha \rightarrow j}(x_j),~ \forall j\notin \Gamma
    \\\label{eq:solutionBethe3}
      B_j (x_j) &=& \mu_j(x_j),~ \forall j\in \Gamma 
\end{eqnarray}
\end{subequations}
where $m_{\alpha \rightarrow j},\,n_{j\rightarrow \alpha}$ are fixed points of the following iterations
\begin{subequations}\label{eq:is_bp_momt}
	\begin{eqnarray}
m_{\alpha\rightarrow j} (x_j) &\propto& \sum_{\bx_\alpha \backslash x_j} K_\alpha(\bx_\alpha) \prod_{i\in N(\alpha)\backslash j}n_{i\rightarrow \alpha}(x_i); \quad \forall j \in V,~  \forall \alpha \in N(j),~ \forall x_j, \label{eq:is_bp_momt1} \\
n_{j\rightarrow \alpha}(x_j) &\propto& \prod_{\beta \in N(j)\backslash \alpha} m_{\beta\rightarrow j}(x_j); \quad \forall j \notin \Gamma,~\forall \alpha \in N(j), ~\forall x_j, \label{eq:is_bp_momt2} \\
n_{j\rightarrow \alpha}(x_j) &\propto&\mu_j(x_j)  (m_{\alpha\rightarrow j}(x_j))^{-1}; \quad \forall j \in \Gamma,~\forall x_j. \label{eq:is_bp_momt3}
	\end{eqnarray}
\end{subequations}
Here $\propto$ indicates that a normalization step is needed.
\end{thm}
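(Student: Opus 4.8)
The plan is to treat \eqref{eq:momt_bethe} as a convex program and read off its minimizer from the Karush--Kuhn--Tucker conditions. Since the Bethe free energy \eqref{eq:Bethe_energy_momt} is convex on a factor tree and the constraints \eqref{eq:momt_bethe_b}--\eqref{eq:momt_bethe_d} are linear, stationarity of the Lagrangian is both necessary and sufficient for optimality, so it suffices to exhibit a stationary point. I would attach a multiplier $\lambda_{\alpha j}(x_j)$ to each compatibility constraint \eqref{eq:momt_bethe_c} (indexed by $j\in V$, $\alpha\in N(j)$, and $x_j$) and a scalar $\gamma_\alpha$ to each normalization constraint \eqref{eq:momt_bethe_d}. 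The marginal constraints \eqref{eq:momt_bethe_b} for $j\in\Gamma$ are imposed directly rather than dualized; this is convenient because $\Gamma\subset L$ consists of leaf nodes, for which $N_j=1$ and hence the Bethe-entropy term $-\epsilon(N_j-1)\sum_{x_j}B_j\ln B_j$ vanishes.

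First I would compute stationarity with respect to $B_\alpha(\bx_\alpha)$. Using $-\ln\psi_\alpha=C_\alpha$ from \eqref{eq:psiphi} and solving, this gives
\[
B_\alpha(\bx_\alpha)\propto\exp(-C_\alpha(\bx_\alpha)/\epsilon)\prod_{j\in N(\alpha)}\exp(-\lambda_{\alpha j}(x_j)/\epsilon),
\]
which is exactly \eqref{eq:solutionBethe1} once we set $n_{j\rightarrow\alpha}(x_j):=\exp(-\lambda_{\alpha j}(x_j)/\epsilon)$ and recall $K_\alpha=\exp(-C_\alpha/\epsilon)$ from \eqref{eq:Kalpha}. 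Substituting this expression into the compatibility constraint \eqref{eq:momt_bethe_c} and factoring out the $j$-th message yields
\[
B_j(x_j)\propto n_{j\rightarrow\alpha}(x_j)\sum_{\bx_\alpha\backslash x_j}K_\alpha(\bx_\alpha)\prod_{i\in N(\alpha)\backslash j}n_{i\rightarrow\alpha}(x_i),
\]
so defining the bracketed sum to be $m_{\alpha\rightarrow j}(x_j)$ reproduces the update \eqref{eq:is_bp_momt1} and produces the key relation $B_j(x_j)\propto n_{j\rightarrow\alpha}(x_j)\,m_{\alpha\rightarrow j}(x_j)$, valid for every $\alpha\in N(j)$.

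Next I would handle the two node cases. For $j\in\Gamma$ the node is a leaf with a single neighbor $\alpha$, so \eqref{eq:momt_bethe_b} directly gives $B_j=\mu_j$, which is \eqref{eq:solutionBethe3}, and the key relation $\mu_j\propto n_{j\rightarrow\alpha}m_{\alpha\rightarrow j}$ rearranges to \eqref{eq:is_bp_momt3}. For $j\notin\Gamma$, stationarity with respect to $B_j(x_j)$ involves only the entropy term $-\epsilon(N_j-1)\sum_{x_j}B_j\ln B_j$ together with the multipliers $\lambda_{\alpha j}$, and after solving it reads $B_j(x_j)^{N_j-1}\propto\prod_{\alpha\in N(j)}n_{j\rightarrow\alpha}(x_j)$.

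The crux --- and the step I expect to require the most care --- is reconciling this last identity with the product-of-messages form \eqref{eq:solutionBethe2}. The idea is a counting argument: multiplying the $N_j$ copies of the key relation $B_j\propto n_{j\rightarrow\alpha}m_{\alpha\rightarrow j}$ gives $B_j^{N_j}\propto(\prod_{\alpha}n_{j\rightarrow\alpha})(\prod_{\alpha}m_{\alpha\rightarrow j})$, and dividing by $B_j^{N_j-1}\propto\prod_{\alpha}n_{j\rightarrow\alpha}$ isolates $B_j\propto\prod_{\alpha\in N(j)}m_{\alpha\rightarrow j}$, which is \eqref{eq:solutionBethe2}. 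Substituting this back into the key relation and cancelling then forces $n_{j\rightarrow\alpha}\propto\prod_{\beta\in N(j)\backslash\alpha}m_{\beta\rightarrow j}$, namely \eqref{eq:is_bp_momt2}. Throughout, I would carefully track the multiplicative constants absorbed into each $\propto$ (the normalization factors $\gamma_\alpha$ and the additive constants from the logarithms), and note that positivity of every message --- needed for the logarithms and for the inverse in \eqref{eq:is_bp_momt3} --- follows from $K_\alpha>0$ and $\mu_j>0$. Since \eqref{eq:momt_bethe} is convex its stationarity conditions are sufficient for optimality, and uniqueness of the minimizer, inherited from the strictly convex problem \eqref{eq:omt_multi_regularized}, certifies that the stationary point constructed above is indeed the solution.
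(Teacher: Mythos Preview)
Your proof is correct and follows essentially the same Lagrangian/KKT approach as the paper: define $n_{j\rightarrow\alpha}$ as the exponential of the compatibility multiplier, read off \eqref{eq:solutionBethe1} from stationarity in $B_\alpha$, obtain the key relation $B_j\propto n_{j\rightarrow\alpha}\,m_{\alpha\rightarrow j}$ from \eqref{eq:momt_bethe_c}, and combine with stationarity in $B_j$ to extract the message updates. The paper differs only cosmetically---it scales the objective by $1/\epsilon$ before forming the Lagrangian, dualizes \eqref{eq:momt_bethe_b} with an explicit multiplier $\boldsymbol\nu_j$, handles the unconstrained-leaf case $N_j=1,\,j\notin\Gamma$ separately (where stationarity in $B_j$ yields $n_{j\rightarrow\alpha}\equiv\text{const}$ rather than a formula for $B_j$, a degeneracy your identity $B_j^{\,N_j-1}\propto\prod_\alpha n_{j\rightarrow\alpha}$ also covers), and derives \eqref{eq:is_bp_momt2} by multiplying the key relation over $\gamma\in N(j)\backslash\alpha$ and cancelling rather than your slightly cleaner multiply-all-then-divide counting argument.
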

\begin{proof}
In order to solve the constrained optimization problem \eqref{eq:momt_bethe}, we introduce Lagrange multipliers $\eta_\alpha$ for the simplex constraints \eqref{eq:momt_bethe_d}, $\boldsymbol\lambda_{j,\alpha}$ for the marginalization compatibility constraints \eqref{eq:momt_bethe_c}, and $\boldsymbol\nu_j$ for the fixed-marginal constraints \eqref{eq:momt_bethe_b}, yielding the Lagrangian
\begin{equation}\label{eq:momt_lagrangian}
    \cL = \frac{1}{\epsilon}\cF_{\rm Bethe}(\bB) + \sum_{\alpha} \eta_\alpha \left( \sum_{\bx_\alpha} B_\alpha(\bx_\alpha) - 1 \right) + \sum_{j,x_j} \sum_{\alpha \in N(j)} \lambda_{j,\alpha}(x_j) \left(\sum_{\mx_\alpha \backslash x_j} B_{\alpha}(\bx_\alpha) -B_j(x_j)  \right) + \sum_{j \in \Gamma} \sum_{x_j} \nu_j(x_j) \left( B_j(x_j) - \mu_j(x_j)  \right).
\end{equation}
Note that we have used a scaled version $\frac{1}{\epsilon}\cF_{\rm Bethe}$ of the objective function. In view of \eqref{eq:psiphi}, \eqref{eq:Bethe_energy_momt} and \eqref{eq:Kalpha},
	\[
		\frac{1}{\epsilon}\cF_{\rm Bethe}(\bB) = - \sum_{\alpha \in F} \sum_{\bx_\alpha} B_\alpha(\bx_\alpha) \ln K_\alpha(\bx_\alpha) +\sum_{\alpha \in F} \sum_{\bx_\alpha} B_\alpha(\bx_\alpha) \ln B_\alpha(\bx_\alpha)-\sum_{j \in V} (N_j - 1) \sum_{x_j} B_j(x_j) \ln B_j(x_j).
	\]
 Setting the derivatives of the Lagrangian with respect to the local marginals $\bB_\alpha$ and $\bB_j$ to zero, we get that the minimizer satisfies
\begin{subequations}\label{eq:proofBethe}
\begin{eqnarray}\label{eq:proofBethe1}
    B_\alpha(\bx_\alpha)  &=&  K_\alpha(\bx_\alpha)~ \exp \left(  -1 - \sum_{j \in N(\alpha)} \lambda_{j,\alpha}(x_j)-\eta_\alpha  \right),
	\\\label{eq:proofBethe2}
    B_j(x_j) &=& \exp\left(-1 - \frac{1}{N_j - 1} \sum_{\alpha \in N(j)} \lambda_{j,\alpha}(x_j)  \right) \quad ~\mbox{if}~ N_j>1
    \\\label{eq:proofBethe3}
    0 &=& \sum_{\alpha \in N(j)} \lambda_{j,\alpha}(x_j) \quad ~\mbox{if}~ N_j=1, j \notin \Gamma
    \\\label{eq:proofBethe4}
    0 &=& \sum_{\alpha \in N(j)} \lambda_{j,\alpha}(x_j)-\nu_j(x_j) \quad ~\mbox{if}~ j \in \Gamma
\end{eqnarray}
\end{subequations}
Denote
	\begin{subequations}\label{eq:mn}
	\begin{eqnarray}\label{eq:n}
	n_{j\rightarrow \alpha} (x_j) &:=& \exp(-\lambda_{j,\alpha} (x_j))
	\\\label{eq:m}
	m_{\alpha\rightarrow j} (x_j) &:=& \sum_{\mx_\alpha \backslash x_j} K_\alpha(\mx_\alpha) \prod_{i\in N(\alpha)\backslash j} n_{i\rightarrow \alpha} (x_i).
	\end{eqnarray}
	\end{subequations}
The relation \eqref{eq:solutionBethe1} follows immediately from \eqref{eq:proofBethe1} and \eqref{eq:n}. This together with the constraint \eqref{eq:momt_bethe_c} and \eqref{eq:m} leads to
	\begin{equation}\label{eq:mnB}
		B_j(x_j) = \sum_{\bx_\alpha \backslash x_j} B_\alpha(\bx_\alpha) \propto n_{j\rightarrow\alpha} (x_j) m_{\alpha\rightarrow j}(x_j). 
	\end{equation}
By \eqref{eq:proofBethe3}, we obtain 
	\[
		n_{j\rightarrow \alpha} (x_j) = 1~\mbox{if}~ N_j=1, j \notin \Gamma.
	\]
It follows that 
	\[
		B_j(x_j) \propto m_{\alpha\rightarrow j} (x_j) ~\mbox{if}~ N_j=1, j \notin \Gamma,
	\]
which is \eqref{eq:solutionBethe2} for leaf nodes. We next show \eqref{eq:solutionBethe2} when $N_j>1$. To this end, we plug \eqref{eq:proofBethe1} and \eqref{eq:proofBethe2} into the constraint \eqref{eq:momt_bethe_c} and arrive at
	\begin{eqnarray*}
		n_{j\rightarrow\gamma} (x_j) m_{\gamma\rightarrow j}(x_j)&\propto&\sum_{\bx_\gamma \backslash x_j} B_\gamma(\bx_\gamma)
		\\&=& B_j(x_j)
		\propto \exp\left(-1 - \frac{1}{N_j - 1} \sum_{\beta \in N(j)} \lambda_{j,\beta}(x_j)  \right)
		\\&\propto& \prod_{\beta\in N(j)} n_{j\rightarrow \beta}(x_j)^{\frac{1}{N_j-1}}.
	\end{eqnarray*}
For fixed $j$, the above holds for all $\gamma\in N(j)$. Multiplying the above equation for all $\gamma\in N(j)\backslash \alpha$ yields
	\[
		\prod_{\gamma \in N(j)\backslash \alpha}\left( n_{j\rightarrow\gamma} (x_j) m_{\gamma\rightarrow j}(x_j)\right) \propto
		\prod_{\beta\in N(j)} n_{j\rightarrow \beta}(x_j),
	\]	
which is \eqref{eq:is_bp_momt2} after canceling out equal terms. Thus, in view of \eqref{eq:mnB},
	\[
		B_j(x_j) \propto n_{j\rightarrow\alpha} (x_j)m_{\alpha\rightarrow j} (x_j) \propto \prod_{\alpha \in N(j)}  m_{\alpha \rightarrow j}(x_j),~\mbox{if}~ N_j>1.
	\]
Finally, \eqref{eq:solutionBethe3} is clearly true due to constraints. This together with \eqref{eq:mnB} leads to \eqref{eq:is_bp_momt3}, which completes the proof.
\end{proof}
The updates in \eqref{eq:is_bp_momt} resemble the standard Belief Propagation algorithm \eqref{eq:BP}. In particular, the updates \eqref{eq:is_bp_momt1} and \eqref{eq:is_bp_momt2} are exactly the same as \eqref{eq:BP}. The update \eqref{eq:is_bp_momt3} is new and is due to the constraints \eqref{eq:momt_bethe_b} on the marginal distributions. Pictorially the message $\bm_{\alpha\rightarrow j}$ sent to a constrained node $j$ from node $\alpha$ bounces back to $\alpha$, in form of $\bn_{j\rightarrow \alpha}$. This is illustrated in Figure \ref{fig:ISBP}.
The update \eqref{eq:is_bp_momt3} in fact corresponds to the scaling step \eqref{eq:sinkhorn_multi} of the Iterative Scaling algorithm (Algorithm \ref{alg:sinkhorn}).
In particular, the multipliers $\{\bu_j~:~j\in \Gamma\}$ in \eqref{eq:u_multi_omt} relate to the messages as $\bu_j = \bn_{j\rightarrow \alpha}$, for $j\in \Gamma$. To see this, we note that the projection $P_j(\bK\odot \bU)$ requires solving a Bayesian inference problem with respect to the modified graphical model
	\begin{equation}\label{eq:middlePGM}
		p(\mx) = \frac{1}{Z}\prod_{\alpha\in F} K_\alpha(\mx_\alpha) \prod_{j\in \Gamma} u_j(x_j).
	\end{equation}
Upon convergence of the Belief Propagation algorithm \eqref{eq:BP}, it holds $P_j(\bK\odot \bU)= \bu_j\bm_{\alpha\rightarrow j}$, where $\alpha$ is the only factor node in $N(j)$ since $j\in \Gamma$ is a leaf node. Thus, the projection step \eqref{eq:sinkhorn_multi} reads
	\[
		\bu_j \odot \boldsymbol\mu_j ./ P_j(\bK \odot \bU) = \boldsymbol\mu_j ./\bm_{\alpha\rightarrow j} = \bn_{j\rightarrow \alpha}.
	\]	
\begin{figure}[tb]	
\centering
\includegraphics[scale=1]{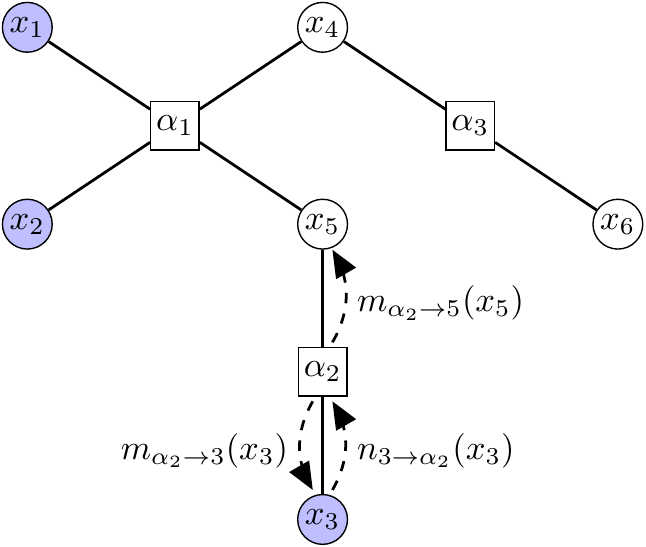}
\caption{Messages in ISBP}
\label{fig:ISBP}
\end{figure}

Therefore, the updates \eqref{eq:is_bp_momt} contain all the components of our ISBP algorithm with \eqref{eq:is_bp_momt1}-\eqref{eq:is_bp_momt2} being the Belief Propagation part and  \eqref{eq:is_bp_momt3} being the Iterative Scaling part. ISBP is a scheduling of these updates in a certain order. As discussed in Section \ref{sec:GraphMOT}, the key idea of ISBP is to implement the projection $P_j(\bK\odot \bU)$ in the iterative scaling step \eqref{eq:sinkhorn_multi} using Belief Propagation. In the contexts of the updates \eqref{eq:is_bp_momt}, it is equivalent to run \eqref{eq:is_bp_momt1}-\eqref{eq:is_bp_momt2} sufficiently many iterations to obtain the precise projection $P_j(\bK\odot \bU)$ and then run  \eqref{eq:is_bp_momt3}, which is essentially \eqref{eq:sinkhorn_multi}. How many iterations of  \eqref{eq:is_bp_momt1}-\eqref{eq:is_bp_momt2} are enough? One option is to run Belief Propagation over the whole graph $G$ with the most recent modified model $\bK\odot\bU$ to compute $P_j(\bK \odot \bU)$ for all $j\in V$. This is clearly sufficient but it is not necessary. Let $j_1, j_2, \ldots$ be a sequence taking values in $\Gamma$ in arbitrary order and suppose the Iterative Scaling algorithm is carried out in this order. Then after the $k$-th step, $\bu_{j_k}$ is updated, and the only projection required in the next step is $P_{j_{k+1}}(\bK \odot \bU)$. It turns out that to evaluate $P_{j_{k+1}}(\bK \odot \bU)$, it suffices to update all the messages on the path from $j_k$ to $j_{k+1}$. Compared to the naive Belief Propagation over the whole graph, this local updating strategy is considerably faster. The steps of the ISBP algorithm are summarized in Algorithm \ref{alg_iterative_scaling}.

\begin{algorithm*}[tb]
   \caption{Iterative Scaling Belief Propagation (ISBP) Algorithm for MOT}
   \label{alg_iterative_scaling}
\begin{algorithmic}
   \STATE Initialize the messages $m_{\alpha\rightarrow j} (x_j)$ and $n_{j\rightarrow \alpha}(x_j)$
   
   \STATE Update $m_{\alpha\rightarrow j} (x_j)$ and $n_{j\rightarrow \alpha}(x_j)$ using \eqref{eq:is_bp_momt1}-\eqref{eq:is_bp_momt2} until convergence
   \WHILE{not converged}
        \STATE Update  $n_{j_k\rightarrow \alpha}(x_{j_k})$ using \eqref{eq:is_bp_momt3}
        \STATE Update all the messages on the path from node $j_k$ to node $j_{k+1}$ according to \eqref{eq:is_bp_momt1} and \eqref{eq:is_bp_momt2}
    \ENDWHILE
\end{algorithmic}
\end{algorithm*}

Upon convergence of Algorithm~\ref{alg_iterative_scaling}, the solution to Problem~\eqref{eq:momt_bethe} can be obtained through \eqref{eq:solutionBethe}. The whole belief tensor $\bB$ can also be obtained through $\bB=\bK\odot \bU$ with $\bU=\bu_1\otimes \bu_2\otimes \ldots \otimes\bu_J$, where $\bu_j=\bn_{j\rightarrow \alpha}$ for $j\in \Gamma$, and $\bu_j=\exp(-\frac{1}{J})\mathbf{1}$ otherwise.


\section{Constrained Norm-product algorithm}
\label{sec:const_norm_prod}

%
One potential drawback of the ISBP algorithm lies in the fact that it is a two-loop algorithm with the outer loop being iterative scaling and inner loop being belief propagation. Such a two-loop structure might slow down the convergence rate, especially when the underlying graph is large. Moreover, the two loops have to coordinate closely to guarantee convergence. Such coordination is even more difficult, or impossible if a distributed implementation is needed. Thus, we seek to develop a single loop algorithm for the entropy regularized MOT problems. A natural question to ask is whether we can borrow ideas from the Bayesian inference literature. 
After all, the Belief Propagation algorithm is not the only algorithm for Bayesian inference. 

The answer is affirmative. In this section, we examine the Norm-product algorithm \cite{HazSha10}, another powerful Bayesian inference method, and extend it to a single loop algorithm for our MOT problems. 
Below we first review Norm-product algorithm for standard Bayesian inference problems in Section \ref{sec:norm_product}. The extensions to entropy regularized MOT, or equivalently, constrained Bayesian inference problems are presented in Section \ref{sec:cnp}.

\subsection{The Norm-product algorithm} \label{sec:norm_product}


Consider the Bayesian inference problem \eqref{eq:free_energy}. The Norm-product algorithm \cite{HazSha10} for Bayesian inference is based on the so called fractional entropy approximation
\begin{equation}\label{eq:fractional_entropy}
    \cH_{\rm frac}(\mb) = \sum_{\alpha\in F} \bar{c}_\alpha \cH(\mb_\alpha) + \sum_{j\in V} \bar{c}_j \cH(\mb_j),
\end{equation}
of the entropy $\cH(\mb)$ in \eqref{eq:var_entropy}. The coefficients $\bar c_\alpha$, for $\alpha \in F$, and $\bar c_j$, for $j \in V$ are defined as
\begin{equation}
    \begin{aligned}
         \bar{c}_\alpha =& \ c_\alpha + \sum_{j \in N(\alpha)} c_{j\alpha},\\
         \bar{c}_j =& \ c_j - \sum_{\alpha \in N(j)} c_{j\alpha},
    \end{aligned}
\end{equation}
for a set of real numbers $c_\alpha$, $c_j$, and $c_{j\alpha}$, for $j\in V$ and $\alpha \in F$, which are known as counting numbers \cite{MesJaiGlo09}.
Clearly, an equivalent formulation of the fractional entropy \eqref{eq:fractional_entropy} is 
\begin{equation}
    \label{eq:fractioanl_entropy_approximation}
    \cH_{\rm frac}(\mb) = \sum_{\alpha\in F} c_\alpha \cH(\mb_\alpha) + \sum_{j\in V} c_j\cH(\mb_j) + \sum_{j\in V} \sum_{\alpha \in N(j)} c_{j\alpha} (\cH(\mb_\alpha) - \cH(\mb_j)).
\end{equation}
The fractional entropy resembles the Bethe entropy \eqref{eq:Bethe_entropy}. In fact, for the choice of counting numbers $c_j = 1 -N_j$, $c_{j\alpha} = 0$, and $c_\alpha = 1$, the fractional entropy  \eqref{eq:fractioanl_entropy_approximation} reduces to the Bethe entropy. Moreover, just like the Bethe entropy, the fractional entropy approximation can be made exact when the underlying graph is a tree (see Section \ref{subsec:counting number}).

With the fractional entropy representation \eqref{eq:fractioanl_entropy_approximation}, the total free energy \eqref{eq:variational_pgm} is modified to the fractional free energy 
\begin{equation} \label{eq:frac_energy}
		\cF_{\rm frac} (\mb) = \cU_{\rm frac} (\mb) - \epsilon\, \cH_{\rm frac}(\mb),
\end{equation}
where 
	\[
		\cU_{\rm frac}(\mb)=\cU(\mb)=- \sum_{\alpha \in F} \sum_{\bx_\alpha} b_\alpha(\bx_\alpha) \ln \psi_\alpha(\bx_\alpha)-\sum_{j\in V}\sum_{x_j} b_j(x_j)\ln \phi_j(x_j)
	\] 
is the average energy defined as in \eqref{eq:var_av_energy}.
Thus, in terms of fractional free energy, the Bayesian inference problem \eqref{eq:free_energy} reads
\begin{subequations}\label{eq: norm-product optimization goal}
\begin{eqnarray}\label{eq: norm-product optimization goal_o}
     \min_{\mb} &&\cF_{\rm frac}(\mb)
    \\ 
   \text{subject to} && \sum_{\bx_\alpha \backslash x_j} b_\alpha(\bx_\alpha) = b_j(x_j), \quad  \forall j \in V, \alpha \in N(j), \label{eq: norm-product optimization goal_c1}
   \\&& \sum_{\bx_\alpha} b_\alpha(\bx_\alpha)= 1, \quad  \forall \alpha \in F. \label{eq: norm-product optimization goal_c2}
\end{eqnarray}
\end{subequations}
The constraints \eqref{eq: norm-product optimization goal_c1}-\eqref{eq: norm-product optimization goal_c2} are to ensure that $\mb_\alpha,\mb_j$ are indeed marginal distributions of some certain joint distribution. For a given graphical model, there are infinitely many different fractional free energy approximations determined by the counting numbers $c_j,\,c_\alpha, c_{j\alpha}$; some of them are convex and some of them are not. A sufficient condition for the convexity of the fractional free energy is as follows.

\begin{lemma}[\cite{HazSha10}]\label{lem:convex}
When the counting numbers satisfy $c_j\geq 0,\,c_{j\alpha}\geq 0$, and $c_\alpha>0$, for $j\in V$ and $\alpha \in F$, then the fractional free energy $\cF_{\rm frac}$ is strictly convex over the set defined by the constraints \eqref{eq: norm-product optimization goal_c1}-\eqref{eq: norm-product optimization goal_c2}. 
\end{lemma}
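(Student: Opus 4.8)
The plan is to exploit the affine structure of the problem and reduce the claim to a term-by-term curvature analysis of $-\epsilon\,\cH_{\rm frac}$, using the equivalent representation \eqref{eq:fractioanl_entropy_approximation}. Since $\cU_{\rm frac}$ is linear in $\mb$, it does not affect second-order behavior, so by \eqref{eq:frac_energy} the function $\cF_{\rm frac}$ is (strictly) convex on the feasible set precisely when $-\epsilon\,\cH_{\rm frac}$ is. I would therefore split $-\epsilon\,\cH_{\rm frac}$ into the three families appearing in \eqref{eq:fractioanl_entropy_approximation}: the factor terms $-\epsilon c_\alpha\cH(\mb_\alpha)$, the node terms $-\epsilon c_j\cH(\mb_j)$, and the mixed terms $-\epsilon c_{j\alpha}\bigl(\cH(\mb_\alpha)-\cH(\mb_j)\bigr)$.

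The first two families are immediate. Each single-block entropy $\mb_\alpha\mapsto\cH(\mb_\alpha)$ and $\mb_j\mapsto\cH(\mb_j)$ is assembled from the strictly concave scalar map $t\mapsto -t\ln t$, hence is strictly concave. Consequently $-\epsilon c_\alpha\cH(\mb_\alpha)$ is strictly convex in $\mb_\alpha$ whenever $c_\alpha>0$, and $-\epsilon c_j\cH(\mb_j)$ is convex in $\mb_j$ whenever $c_j\geq 0$.

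The crux is the mixed family, which a priori is a difference of two concave functions and so carries no definite curvature by itself. Here I would invoke the compatibility constraint \eqref{eq: norm-product optimization goal_c1}, which forces $\mb_j$ to be the $j$-th marginal of $\mb_\alpha$ for each $\alpha\in N(j)$. On this set $\cH(\mb_\alpha)-\cH(\mb_j)$ equals the conditional entropy $-\sum_{\bx_\alpha}b_\alpha(\bx_\alpha)\ln\bigl(b_\alpha(\bx_\alpha)/b_j(x_j)\bigr)$, and the key fact to establish is that this quantity is concave in $\mb_\alpha$. I would prove it by grouping the sum according to the value of $x_j$: for each fixed $x_j$ the associated block contributes $b_j(x_j)\,\cH\bigl(b_\alpha(\cdot)/b_j(x_j)\bigr)$, which is exactly the perspective of the concave entropy, with $b_j(x_j)$ a linear functional of that block under the constraint, and is therefore jointly concave. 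Summing concave contributions over $x_j$ preserves concavity, so $\cH(\mb_\alpha)-\cH(\mb_j)$ is concave, and with $c_{j\alpha}\geq 0$ each mixed term $-\epsilon c_{j\alpha}\bigl(\cH(\mb_\alpha)-\cH(\mb_j)\bigr)$ is convex. This perspective-function argument is the step I expect to be the main obstacle.

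Combining the three families shows that $\cF_{\rm frac}$ is convex on the feasible set defined by \eqref{eq: norm-product optimization goal_c1}--\eqref{eq: norm-product optimization goal_c2}. For strict convexity I would examine an arbitrary segment between two distinct feasible points $\mb^{0}\neq\mb^{1}$: since every variable node satisfies $N(j)\neq\emptyset$, the constraints \eqref{eq: norm-product optimization goal_c1} determine each $\mb_j$ as a marginal of some adjacent $\mb_\alpha$, so if all factor marginals coincided we would force $\mb^{0}=\mb^{1}$. Hence some $\mb_\alpha^{0}\neq\mb_\alpha^{1}$, and the strictly convex term $-\epsilon c_\alpha\cH(\mb_\alpha)$ with $c_\alpha>0$ produces strict inequality of $\cF_{\rm frac}$ along the segment, which completes the argument.
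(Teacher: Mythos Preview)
Your argument is correct and follows the same line as the paper, which does not prove the lemma in full but merely cites \cite{HazSha10} and elsewhere remarks that $\cH(\mb_\alpha)$ and $\cH(\mb_j)$ are concave while $\cH(\mb_\alpha)-\cH(\mb_j)$, though not concave in general, becomes concave once the marginalization constraint \eqref{eq: norm-product optimization goal_c1} is imposed. Your perspective-function justification of that last point and your observation that any two distinct feasible $\mb$ must differ in some factor block $\mb_\alpha$ (so that $c_\alpha>0$ delivers strictness) fill in exactly the details the paper omits.
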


Denote the two sets corresponding to the constraints \eqref{eq: norm-product optimization goal_c1} and \eqref{eq: norm-product optimization goal_c2} by 
\begin{equation} \label{eq:set_consistency}
    \cM = \Big\{ \mb : \sum_{\bx_\alpha \backslash x_j} b_\alpha(\bx_\alpha) = b_j(x_j), \quad  \forall j \in V, \alpha \in N(j) \Big\}
   \end{equation} 
    and 
   \begin{equation} \label{eq:set_normalization}
    \cP = \Big\{ \mb: \sum_{\bx_\alpha} b_\alpha(\bx_\alpha)= 1, \quad  \forall \alpha \in F \Big\},
\end{equation}
respectively, and define 
\begin{equation}
    \hat{f}(\mb) =     - \sum_{\alpha\in F}\sum_{\mx_{\alpha}} b_\alpha   (\mx_\alpha) \ln \psi_\alpha(\mx_\alpha)  -
        \sum_{\alpha\in F} \epsilon\, c_\alpha \cH(\mb_\alpha)
\end{equation}
and
\begin{equation} \label{eq:h_j}
  \hat{h}_{j}(\mb) = - \sum_{x_j} b_j(x_j)\ln \phi_j(x_j)  - \epsilon\, c_j \cH(\mb_j) -
    \sum_{\alpha \in N(j)} \epsilon\, c_{j \alpha} (\cH(\mb_\alpha)-\cH(\mb_j)).
\end{equation}
Further denote $f(\mb) = \hat f(\mb) + \delta_\cP(\mb)$ and $h_j(\mb) = \hat h_j(\mb) + \delta_\cM(\mb)$ where $\delta$ is the indicator function. Then the Bayesian inference problem \eqref{eq: norm-product optimization goal} can be reformulated as 
\begin{equation}\label{eq:primaldualformulation}
\min_{\mb} f( \mb) + \sum_{j=1}^J h_j (\mb).
\end{equation}

In cases where $c_j\geq 0,\,c_{j\alpha}\geq 0$, and $c_\alpha>0$, for $j\in V$ and $\alpha \in F$, by Lemma \eqref{lem:convex}, $\hat f$ is strictly convex and $\hat h_j$ is convex for each $1\le j\le J$ for $\mb\in \cM\cap \cP$. 
The Norm-product algorithm relies on the reformulation \eqref{eq:primaldualformulation}.
In particular, it leverages a powerful primal-dual ascent algorithm (stated below in Lemma \ref{lem:Primal-Dual Ascent}) that is well studied in the convex optimization community to solve problem with the special structure of \eqref{eq:primaldualformulation}. The primal-dual ascent in Lemma \ref{lem:Primal-Dual Ascent} is derived from a more general algorithm known as dual block ascent \cite{LuoTse93} and thus inherit the nice convergence property of the latter. We refer the read to \cite{HazSha10} for more details on these algorithms.

\begin{lemma}[\cite{HazSha10}] \label{lem:Primal-Dual Ascent}
Consider the convex optimization problem $\min f + \sum_{j=1}^J h_j$ with $f(\mb) = \hat{f}(\mb) + \delta_{\mathcal{B}}(\mb)$ where $\mathcal{B} = \left\{ \mb: A\mb = \mathbf{c}\right\}$. The {\bf primal-dual ascent} algorithm initializes $\boldsymbol\lambda_1 = 0,\dots,\boldsymbol\lambda_J=0$ and repeatedly iterates the following steps for $j=1,\dots,J$ until convergence: 
\begin{subequations}\label{eq:pda}
	\begin{eqnarray}\label{eq:pdaa}
	\boldsymbol\nu_j & \leftarrow & \sum_{i \neq j} \boldsymbol\lambda_i \\\label{eq:pdab}
	\mb^* & \leftarrow & \argmin_{\mb \in dom(f)\cap dom(h_j)} \left\{ f(\mb)+h_j(\mb)+\mb^T\boldsymbol\nu_j \right\} \\\label{eq:pdac}
	\boldsymbol\lambda_j & \leftarrow & -\boldsymbol\nu_j - \nabla \hat f(\mb^*) + A^T \boldsymbol\sigma \text{ where $\boldsymbol\sigma$ is an arbitrary vector.}
	\end{eqnarray}
\end{subequations}
Suppose $\hat f$ is strictly convex and smooth, and $h_j,\,j=1,\ldots, J$ are convex and continuous over their domains, then $\mb^*$ in the above iteration converges to the unique global minimizer of $f+ \sum_{j=1}^J h_j$.
\end{lemma}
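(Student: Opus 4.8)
The plan is to recognize the three-step iteration \eqref{eq:pda} as a disguised \emph{block coordinate ascent} on the Fenchel dual of $\min_\mb f(\mb) + \sum_{j=1}^J h_j(\mb)$, and then to import the convergence guarantee for dual block ascent \cite{Tse90,LuoTse93}. First I would form the dual. Writing each $h_j(\mb) = \sup_{\boldsymbol\lambda_j}\{\langle \boldsymbol\lambda_j,\mb\rangle - h_j^*(\boldsymbol\lambda_j)\}$ and exchanging the minimization over $\mb$ with the supremum over the $\boldsymbol\lambda_j$ (justified by convexity together with a constraint qualification ensuring strong duality), the inner minimization collapses to a conjugate of $f$, giving
\[
\max_{\boldsymbol\lambda_1,\dots,\boldsymbol\lambda_J}\quad D(\boldsymbol\lambda):= -f^*\Big(-\sum_{j=1}^J \boldsymbol\lambda_j\Big) - \sum_{j=1}^J h_j^*(\boldsymbol\lambda_j).
\]
This objective has the decisive structure of a smooth coupling term $-f^*(-\sum_j\boldsymbol\lambda_j)$ plus a term $-\sum_j h_j^*(\boldsymbol\lambda_j)$ that is \emph{separable} across the blocks $\boldsymbol\lambda_j$; the coupling is differentiable precisely because $\hat f$ is strictly convex, which renders $f^*$ differentiable.

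Next I would show that one pass of \eqref{eq:pdaa}--\eqref{eq:pdac} is exactly the coordinate ascent step that maximizes $D$ over the block $\boldsymbol\lambda_j$ with the others frozen. Step \eqref{eq:pdaa} records the frozen sum $\boldsymbol\nu_j=\sum_{i\neq j}\boldsymbol\lambda_i$, so that $\sum_i\boldsymbol\lambda_i=\boldsymbol\nu_j+\boldsymbol\lambda_j$ and the block subproblem is $\max_{\boldsymbol\lambda_j} -f^*(-\boldsymbol\nu_j-\boldsymbol\lambda_j)-h_j^*(\boldsymbol\lambda_j)$. Writing its optimality condition and invoking Fenchel--Young duality, the maximizer is characterized through the primal point $\mb^*=\nabla f^*(-\boldsymbol\nu_j-\boldsymbol\lambda_j)$: the relations $-\boldsymbol\nu_j-\boldsymbol\lambda_j\in\partial f(\mb^*)$ and $\boldsymbol\lambda_j\in\partial h_j(\mb^*)$ add to $0\in\partial\big(f+h_j+\langle\boldsymbol\nu_j,\cdot\rangle\big)(\mb^*)$, which is exactly the primal subproblem \eqref{eq:pdab}. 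Finally, since $f=\hat f+\delta_{\cB}$ with $\cB=\{\mb:A\mb=\mathbf c\}$ affine, we have $\partial f(\mb^*)=\nabla\hat f(\mb^*)+\mathrm{range}(A^T)$, the last summand being the normal cone to $\cB$ at the feasible point $\mb^*$; solving $-\boldsymbol\nu_j-\boldsymbol\lambda_j=\nabla\hat f(\mb^*)+A^T\boldsymbol\sigma'$ for $\boldsymbol\lambda_j$ reproduces the read-off \eqref{eq:pdac}, and the arbitrariness of $\boldsymbol\sigma$ reflects that $\boldsymbol\lambda_j$ is pinned down only modulo $\mathrm{range}(A^T)$. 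I would then note that this ambiguity is harmless: shifting $\boldsymbol\lambda_j$ by $A^T\boldsymbol\delta$ moves the argument of $f^*$ by $-A^T\boldsymbol\delta$, which merely adds the constant $-\langle\boldsymbol\delta,\mathbf c\rangle$ to the linear form defining $f^*$ over $\cB$, leaving $\mb^*=\nabla f^*(-\sum_i\boldsymbol\lambda_i)$ unchanged.

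With the algorithm identified as block coordinate ascent on $D$, I would conclude by appealing to the convergence theory for this scheme \cite{Tse90,LuoTse93}. The smooth-coupling-plus-separable structure of $D$, together with strict convexity and smoothness of $\hat f$ and convexity/continuity of the $h_j$, places the iteration in the regime where block ascent drives $D$ to its optimum and every limit point is a global dual maximizer. The primal iterate is recovered as $\mb^*=\nabla f^*(-\sum_i\boldsymbol\lambda_i)$; strict convexity of $\hat f$ guarantees that the primal minimizer of $f+\sum_j h_j$ is unique, and smoothness of $f^*$ makes the recovery map single-valued and continuous, so $\mb^*$ converges to that unique minimizer.

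The main obstacle I anticipate is the passage from dual to primal convergence. Because the conjugates $h_j^*$ need not be strictly convex, the dual maximizer---and hence the sequence $\{\boldsymbol\lambda_j\}$---may fail to be unique or to converge blockwise. The resolution, which is the crux of the argument, is that only the aggregate $\sum_i\boldsymbol\lambda_i$, and in fact only its image $\nabla f^*(-\sum_i\boldsymbol\lambda_i)=\mb^*$, enters the primal recovery, and this image \emph{does} converge; making this rigorous, and verifying the constraint qualification underpinning both strong duality and the minimax exchange (e.g.\ an overlap of relative interiors of the domains), is where the care lies. The block-ascent reduction itself, by contrast, is essentially bookkeeping once the dual is written down.
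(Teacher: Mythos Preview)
The paper does not supply its own proof of this lemma: it is stated as a result from \cite{HazSha10}, and the surrounding text only remarks that ``the primal-dual ascent in Lemma~\ref{lem:Primal-Dual Ascent} is derived from a more general algorithm known as dual block ascent \cite{LuoTse93} and thus inherit the nice convergence property of the latter,'' referring the reader to \cite{HazSha10} for details. Your proposal is precisely this derivation carried out in full: you form the Fenchel dual, identify the three-step iteration \eqref{eq:pda} as exact block coordinate ascent on the dual objective $D(\boldsymbol\lambda)=-f^*(-\sum_j\boldsymbol\lambda_j)-\sum_j h_j^*(\boldsymbol\lambda_j)$, and then import the convergence guarantee from \cite{Tse90,LuoTse93}. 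This matches both the paper's one-line description and the argument in \cite{HazSha10}, so your approach is correct and aligned with the intended proof.
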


The Norm-product algorithm \cite{HazSha10} (Algorithm \ref{alg:Norm-product Belief Propagation}) is a direct application of the primal-dual ascent algorithm to the formulation \eqref{eq:primaldualformulation} of the Bayesian inference problem \eqref{eq: norm-product optimization goal}. It can be seen as a message-passing type algorithm for problem \eqref{eq: norm-product optimization goal}, where the dual variables $\boldsymbol\lambda_j$ in the primal-dual ascent algorithm work as ``messages'' between neighboring nodes.
To see this, note that $h_j(\mb)$ depends only on $\mb_\alpha$, where $\alpha \in N(j)$, and thus the corresponding dual variable $\boldsymbol\lambda_j$ depends only on $\bx_\alpha$, where $\alpha \in N(j)$. This sparsity is encoded by the representation $\boldsymbol\lambda_j = \{\lambda_{j,\alpha}(\bx_\alpha)\}$.
The relation between the dual variables $\boldsymbol\lambda_j$ in Lemma~\ref{lem:Primal-Dual Ascent} and messages in Algorithm~\ref{alg:Norm-product Belief Propagation} is given by $n_{j\rightarrow \alpha}(\bx_\alpha)=\exp (-\lambda_{j,\alpha}(\bx_\alpha))$.


\begin{algorithm*}[tb]
  \caption{The Norm-product Algorithm}
  \label{alg:Norm-product Belief Propagation}
\begin{algorithmic}
  \STATE Initialize $n_{j \rightarrow \alpha} (\mx_\alpha) = 1$ for all $j = 1, \cdots, J$, $\alpha \in N(j)$ and $\mx_\alpha$
  \WHILE{not converged}
        \FOR{$j = 1, 2, \ldots, J$}
            \STATE 
        \begin{eqnarray*}
             m_{\alpha \rightarrow j}(x_j) &=& 
            \left( 
                \sum_{\mx_\alpha \setminus x_j}
                    \left( 
                        \psi_\alpha(\mx_\alpha)\prod_{i\in N(\alpha) \setminus j} n_{i  \rightarrow \alpha}(\mx_\alpha)
                    \right)^{1/\epsilon \hat{c}_{j\alpha}}
            \right)^{\epsilon\hat{c}_{j\alpha}}, ~ \forall \alpha \in N(j), \forall x_j
            \\
             n_{j \rightarrow \alpha}(\mx_{\alpha}) &\propto& 
            \left(
                \frac{\phi_j^{1/\hat{c}_j}(x_j)
                        \prod_{\beta\in N(j)} m_{\beta\rightarrow j}^{1/\hat{c}_j}(x_j)}
                {m_{\alpha \rightarrow j}^{1/\hat{c}_{j\alpha}}(x_j)}
            \right)^{c_\alpha}
            \left(
                \psi_\alpha(\mx_\alpha)
                    \prod_{i\in N(\alpha)\setminus j} n_{i\rightarrow \alpha}(\mx_\alpha)
            \right)^{-c_{j\alpha}/\hat{c}_{j\alpha}}, ~ \forall \alpha \in N(j),\forall \mx_\alpha
        \end{eqnarray*}
        \ENDFOR
  \ENDWHILE
\end{algorithmic}
\end{algorithm*}

For more details on the derivation of the Norm-product algorithm as a primal-dual ascent method, see \cite{HazSha10}. Moreover, our development of the constrained Norm-product algorithm (Algorithm \ref{alg:constrained norm-product}) is similar to this, and is  provided in the appendix. Upon convergence of Algorithm~\ref{alg:Norm-product Belief Propagation}, the solution to \eqref{eq: norm-product optimization goal} has the form
\begin{subequations}\label{eq:convergemarginals}
\begin{eqnarray}
        b_j(x_j) &\propto& \left( \phi_j(x_j) \prod_{\alpha \in N(j)}m_{\alpha \rightarrow j}(x_j) \right)^{1/\epsilon \hat{c}_j},
        \\
        b_\alpha(\bx_\alpha) &\propto& \left(\psi_\alpha(\mx_\alpha)\prod_{j\in N(\alpha)} n_{j \rightarrow \alpha}(\mx_\alpha)\right)^{1/\epsilon c_{\alpha}},
\end{eqnarray}
\end{subequations}
where again $\propto$ indicates that a normalization step might be needed. Finally, note that for the special choice of counting numbers $c_j = 1 -N_j$, $c_{j\alpha} = 0$, and $c_\alpha = 1$, the Norm-product algorithm reduces to the Belief Propagation algorithm \eqref{eq:BP}. However, note that this choice of counting number does not satisfy the conditions in Lemma \ref{lem:convex} for convergence of the algorithm, although it is well known that the Belief Propagation algorithm is guaranteed to converge for trees \cite{JorGhaJaaSau99}. Therefore, even though formally the Norm-product algorithm can be viewed as a unifying framework for many message-passing algorithms, its convergence proof is restricted in some sense due to the strong requirement on the counting numbers.  


\subsection{Constrained Norm-product algorithm}
\label{sec:cnp}

In this section, we develop a Norm-product type algorithm for the entropy regularized MOT problem \eqref{eq:omt_multi_regularized}, or equivalently the constrained Bayesian inference problem \eqref{eq:momt_free_energy}.
Consider a modification of problem \eqref{eq: norm-product optimization goal} with constrained marginal distributions, which reads
\begin{subequations}\label{eq: constrained-norm-product-optimization} 
\begin{eqnarray}
     \min_{\mb} &&\cF_{\rm frac} (\mb) \label{eq: constrained-norm-product-optimization_o} \\
    \text{subject to}      && b_j(x_j) = \mu_j(x_j),\quad \forall j\in \Gamma, \label{eq: constrained-norm-product-optimization_c1} \\
    && \sum_{\mx_\alpha \setminus x_j} b_\alpha(\mx_\alpha) = b_j(x_j)\quad \forall j \in V,\alpha \in N(j) \label{eq: constrained-norm-product-optimization_c2} \\
            &&  \sum_{\mx_\alpha} b_\alpha (\mx_\alpha) = 1 \quad \forall \alpha \in F \label{eq: constrained-norm-product-optimization_c3} .
\end{eqnarray}
\end{subequations}

Problem \eqref{eq: constrained-norm-product-optimization} can be seen in the light of the entropy regularized MOT problem formulated as a free energy minimization problem \eqref{eq:momt_free_energy}. In particular, if the free energy $\cF$ is approximated by the fractional free energy $\cF_{\rm frac}$, then \eqref{eq:momt_free_energy} becomes \eqref{eq: constrained-norm-product-optimization}. Recall that in the MOT problem the factor and node potentials are $\boldsymbol\psi_\alpha=\exp (-\bC_\alpha)$ and $\boldsymbol\phi_j\equiv 1$ (cf. \eqref{eq:psiphi}).
However, the constrained Norm-product algorithm, which we develop in the following solves the Bayesian inference problem \eqref{eq: constrained-norm-product-optimization} for any potentials $\boldsymbol\psi_\alpha$ and $\boldsymbol\phi_j$.

Note that compared to \eqref{eq: norm-product optimization goal}, the modified problem \eqref{eq: constrained-norm-product-optimization} is only augmented by one linear constraint \eqref{eq: constrained-norm-product-optimization_c1}.
Thus, problem \eqref{eq: constrained-norm-product-optimization} can be formulated as in \eqref{eq:primaldualformulation} by changing the set $\cM$ in $h_j(\mb) = \hat h_j(\mb) + \delta_\cM(\mb)$ to 
\begin{equation}
    \cM = \Big\{ \mb : \sum_{\bx_\alpha \backslash x_j} b_\alpha(\bx_\alpha) = b_j(x_j), \forall j\in V, \alpha\in N(j),\, b_j(x_j) = \mu_j(x_j), \, \forall j \in \Gamma \Big\},
\end{equation}
instead of \eqref{eq:set_consistency}, and defining all other components of \eqref{eq:primaldualformulation} as in \eqref{eq:set_normalization}-\eqref{eq:h_j}.
The primal-dual ascent algorithm in Lemma~\ref{lem:Primal-Dual Ascent} can then be applied to \eqref{eq: constrained-norm-product-optimization}. The resulting Constrained Norm-product (CNP) algorithm is presented in Algorithm \ref{alg:constrained norm-product}. For a detailed derivation of the method see Appendix~\ref{sec:proofCNP}.

\begin{algorithm*}[tb]
  \caption{Constrained Norm-product (CNP) algorithm}
  \label{alg:constrained norm-product}
\begin{algorithmic}
  \STATE Set $n_{j \rightarrow \alpha} (\mx_\alpha) = 1$ for all $j = 1, \cdots,J$, $\alpha \in N(j)$ and $\mx_\alpha$
  \WHILE{not converged}
        \FOR{$j = 1, 2, \ldots, J$}
        
        \STATE 
        \begin{equation*}
            m_{\alpha \rightarrow j}(x_j) = 
            \left( 
                \sum_{\mx_\alpha \setminus x_j}
                    \left( 
                        \psi_\alpha(\mx_\alpha)\prod_{i\in N(\alpha) \setminus j} n_{i  \rightarrow \alpha}(\mx_\alpha)
                    \right)^{1/\epsilon \hat{c}_{j\alpha}}
            \right)^{\epsilon\hat{c}_{j\alpha}}, ~ \forall \alpha \in N(j), \forall x_j 
        \end{equation*}
        
        \STATE 
        \IF{$j \notin \Gamma$}
        \STATE 
        \begin{equation*}
            n_{j \rightarrow \alpha}(\mx_{\alpha}) \propto 
            \left(
                \frac{\phi_j^{1/\hat{c}_j}(x_j)
                        \prod_{\beta\in N(j)} m_{\beta\rightarrow j}^{1/\hat{c}_j}(x_j)}
                {m_{\alpha \rightarrow j}^{1/\hat{c}_{j\alpha}}(x_j)}
            \right)^{c_\alpha}
            \left(
                \psi_\alpha(\mx_\alpha)
                    \prod_{i\in N(\alpha)\setminus j} n_{i\rightarrow \alpha}(\mx_\alpha)
            \right)^{-c_{j\alpha}/\hat{c}_{j\alpha}}, ~ \forall \alpha \in N(j),\forall \mx_\alpha
        \end{equation*}
        \ELSIF{$j \in \Gamma$}
        \STATE
        \begin{equation*}
            n_{j \rightarrow \alpha}(\mx_{\alpha}) \propto 
            \left(
                \frac{\mu_j (x_j)}
                {m_{\alpha \rightarrow j}^{1/\epsilon \hat{c}_{j\alpha}}(x_j)}
            \right)^{\epsilon c_\alpha}
            \left(
                \psi_\alpha(\mx_\alpha)
                    \prod_{i\in N(\alpha)\setminus j} n_{i\rightarrow \alpha}(\mx_\alpha)
            \right)^{-c_{j\alpha}/\hat{c}_{j\alpha}}   , ~ \forall \alpha \in N(j), \forall \mx_\alpha         
        \end{equation*}
        \ENDIF
        \ENDFOR
  \ENDWHILE

\end{algorithmic}
\end{algorithm*}
Upon convergence of Algorithm~\ref{alg:constrained norm-product}, the solution to \eqref{eq: constrained-norm-product-optimization} is of the form \eqref{eq:convergemarginals}, as in the standard Norm-product algorithm. Moreover, the optimal marginal calculated through \eqref{eq:convergemarginals} satisfies the constraint $\mb_j=\boldsymbol\mu_j$ for all $j\in \Gamma$. 
Algorithm~\ref{alg:constrained norm-product} is presented for general constrained Bayesian inference problems \eqref{eq: constrained-norm-product-optimization}. Recall that the entropy regularized MOT problem \eqref{eq:momt_free_energy} is recovered as the special case, where the potentials are given by
$\boldsymbol\psi_\alpha=\exp (-\bC_\alpha)$ and $\boldsymbol\phi_j\equiv 1$, as in \eqref{eq:psiphi}. 

Compared with the standard Norm-product algorithm, the messages from variable nodes with marginal constraint to the neighboring factor nodes, i.e., $\bn_{j \rightarrow \alpha}$, for $j\in \Gamma,\,\alpha\in N(j)$, depend not only on the incoming messages to $j$ and $\alpha$, but also the given marginal $\boldsymbol\mu_j$. Moreover, in the case when the marginal constraint \eqref{eq: constrained-norm-product-optimization_c1} is absent, namely, $\Gamma=\emptyset$, Algorithm~\ref{alg:constrained norm-product} reduces to the standard Norm-product belief algorithm~\ref{alg:Norm-product Belief Propagation}.

\begin{remark}
The message updates in Algorithm \ref{alg:constrained norm-product} can be problematic when the denominators become zero. This scenario can occur when either the factor or node potentials $\psi_\alpha(\mx_\alpha)$ or $\phi_j(x_j)$ contain zero elements. 
Note that zero entries in the potential let the average energy \eqref{eq:var_av_energy} be unbounded if $b_j(x_j)$ or $b_\alpha(\mx_\alpha)$ are nonzero on the corresponding entries.
In implementations, this can be avoided by ignoring the updates involving zero denominators. See \cite[Appendix F]{HazSha10} for a more detailed discussions of this issue. 
\end{remark}

\subsection{Relations to Iterative Scaling Belief Propagation algorithm} 

Compared to the ISBP algorithm, the CNP algorithm is a single loop algorithm. Each iteration of Algorithm \ref{alg:constrained norm-product} requires visiting every variable node only once. In contrast, since Algorithm \ref{alg_iterative_scaling} has a double-loop structure and each inner-loop iteration requires updating throughout an entire path between two leaf nodes, the messages associated with most variable nodes will be updated multiple times in one iteration of the algorithm. Thus, the iteration complexity of the ISBP algorithm is higher than that of the CNP algorithm. This difference becomes more significant as the diameter/size of the underlying graph increases; for larger graphs, the inner-loop iteration of ISBP algorithm takes more updates. Apart from the iteration complexity, another potential advantage of the CNP algorithm is that its single loop structure allows for more flexible scheduling of the message passing/updating. In particular, it does not require any communication between inner and outer loop updates. Thus, it is easier to parallelize the Constrained Norm-product algorithm or develop a distributed version of it.  


Recall from Section~\ref{sec:norm_product} that the standard Norm-product method with counting numbers chosen as $c_\alpha = 1$, $c_j = 1- N_j$ and $c_{j\alpha} = 0$ reduces to the standard Belief propagation method as given in \eqref{eq:BP}.
It turns out that similar results can be established to relate the Iterative Scaling Belief Propagation algorithm and the Constrained Norm-product algorithm. In particular, with this set of counting numbers, the constrained Norm-Product algorithm reads
\begin{subequations} \label{eq:cbp}
\begin{eqnarray}
    \label{eq:factor2variable_is}
     m_{\alpha \rightarrow j}(x_j) &=& 
        \sum_{\mx_\alpha \setminus x_j}
            \left(
                \psi_\alpha(\mx_\alpha)\prod_{i\in N(\alpha) \setminus j} n_{i  \rightarrow \alpha}(x_j)
            \right), \quad \forall \alpha \in N(j), \forall x_j
\\
    \label{eq:variable2factor_free_is}
    n_{j \rightarrow \alpha}(x_j) &\propto& 
    \left(
        \phi_j(x_j)
                \prod_{\beta\in N(i)\setminus \alpha} m_{\beta\rightarrow j}(x_j)
    \right), \quad \forall j \notin \Gamma,\forall \alpha \in N(j),\forall x_j
\\
    \label{eq:variable2factor_constrained_is}
     n_{j \rightarrow \alpha}(x_j) &\propto& 
   \mu_j (x_j)
        (m_{\alpha \rightarrow j}(x_j))^{-1}, \quad \forall j \in \Gamma,\forall \alpha \in N(j), \forall x_j.
\end{eqnarray}
\end{subequations}
Note that in general the messages $\bn_{j\rightarrow \alpha}$ in the Constrained Norm-product algorithm depend on $\mx_\alpha$, but for this special choice of counting numbers, they depend only on $x_j$. 
The messages \eqref{eq:cbp} are exactly the same as the messages \eqref{eq:is_bp_momt} in the ISBP algorithm. 
If the messages in \eqref{eq:cbp} are scheduled in a specific way, then this becomes the Iterative Scaling Belief Propagation Algorithm~\ref{alg_iterative_scaling}. 
In particular, this is achieved by cycling through the nodes in $ \Gamma$, where for two successive nodes $j_1,j_2\in\Gamma$, one schedules the messages \eqref{eq:factor2variable_is} and \eqref{eq:variable2factor_free_is} on the path from $j_1$ to $ j_2 $, and finally the message $\bn_{j_2 \rightarrow \alpha}$ as in \eqref{eq:variable2factor_constrained_is}.
In this light, Algorithm~\ref{alg_iterative_scaling} may not only be understood as Iterative scaling Belief propagation, but also as constrained Belief propagation, i.e., an extension of the standard Belief propagation method, where the marginals on some nodes are fixed.

What if we update the messages \eqref{eq:cbp} following the scheduling of Algorithm \ref{alg:constrained norm-product}? 
In fact, this is a single-loop version of the ISBP algorithm, and we have empirically observed good convergence properties of it. 
However, the choice of counting numbers $c_\alpha = 1$, $c_j = 1- N_j$ and $c_{j\alpha} = 0$ does not yield a strictly convex objective function decomposition in the associated fractional variational inference problem \eqref{eq: constrained-norm-product-optimization} as discussed in Lemma \ref{lem:convex}.
Thus, the convergence proof of Algorithm \ref{alg:constrained norm-product} does not apply to this setting, and a global convergence proof remains an open problem.

\subsection{Counting numbers of fractional entropy}
\label{subsec:counting number} 

One way to guarantee the convergence of the Constrained Norm-product algorithm is to choose the counting numbers for the fractional entropy $\cH_{\rm frac}$ such that they satisfy the convexity conditions in Lemma~\ref{lem:convex}. 
Thus, a crucial question 
is whether, given a graphical model, such a choice of counting numbers exists, and how to find them.
This question has been discussed in \cite[Appendix E]{HazSha10} where several optimization based methods have been proposed.
In this section, we present a structured method to construct a feasible set of counting numbers that satisfy the assumptions in Lemma \ref{lem:convex}, viz., $c_j\geq 0,\,c_{j\alpha}\geq 0, c_\alpha>0$, for factor graphs, which are trees. In particular, we provide a closed form expression for the choice of counting numbers, which makes parameter tuning for the Constrained-norm product algorithm simple and intuitive.

The fractional entropy decomposition requires the fractional entropy $\cH_{\rm frac}(\mb)$ to be equal to the entropy $\cH(\mb)$, that is 
	\[
\cH(\mb) = \cH_{\rm frac} (\mb) = \sum_{\alpha\in F} c_\alpha \cH(\mb_\alpha) + \sum_{j\in V} c_j\cH(\mb_j) + \sum_{j\in V,\alpha \in N(j)} c_{j\alpha} (\cH(\mb_\alpha) - \cH(\mb_j)).
	\]
On the other hand, for a factor tree, the entropy equals the Bethe entropy, namely,
	\[
		\cH(\mb)=\cH_{\rm Bethe}(\mb)=\sum_{\alpha\in F}\cH(\mb_\alpha)-\sum_{j\in V}(N_j-1)\cH(\mb_j).
	\]
It follows that
\[
    \sum_{j\in V}(1-N_j)\cH(\mb_j) + \sum_{\alpha\in F}\cH(\mb_\alpha) = 
    \sum_{j\in V} (c_j - \sum_{\alpha \in N(j)} c_{i\alpha} )\cH(\mb_j) + 
    \sum_{\alpha\in F} (c_\alpha + \sum_{i \in N(j)} c_{j\alpha})\cH(\mb_\alpha).
\]
Hence, by identifying the coefficients, we see that finding a set of feasible convex counting numbers is achieved by finding $c_\alpha >0, c_j \geq 0, c_{j\alpha} \geq 0$ that satisfy the following equations
\begin{subequations}\label{eq:counting number detail}
\begin{eqnarray}
    &&c_j - \sum_{\alpha \in N(j)} c_{j\alpha} = 1 -N_j, \label{eq:varible_node coef}\\
    && c_\alpha + \sum_{j \in N(\alpha)} c_{j\alpha} = 1. \label{eq:factor_node coef} 
\end{eqnarray}
\end{subequations}

A direct consequence of \eqref{eq:counting number detail} is 
	\begin{equation}\label{eq:sum1}
        		\sum_{j \in V} c_j + \sum_{\alpha \in F} c_{\alpha} = 1.
    	\end{equation}
To see this, sum up \eqref{eq:varible_node coef} and \eqref{eq:factor_node coef} over all variable nodes and factor nodes. The left hand side becomes $\sum_{j \in V} c_j + \sum_{\alpha \in F} c_{\alpha}$ as all the terms $c_{j\alpha}$ get canceled. The right hand side becomes
	\[
		\sum_{j\in V} (1-N_j) +\sum_{\alpha \in F} 1 = -\sum_{j\in V} N_j +\sum_{j \in V} 1+\sum_{\alpha \in F} 1= -|E| + |V|=1.
	\]
The last equality is due to the fact that the factor graph is a tree (acyclic).  


The property \eqref{eq:sum1} can be generalized to subgraphs of $G$. Let $(j_*,\alpha_*)$ be any edge of $G$. If we cut this edge, then the tree $G$ is split into two trees $G_1$ and $G_2$, where $G_1$ contains the variable node $j_*$ and $G_2$ contains the factor node $\alpha_*$. This is illustrated in Figure \ref{fig:subgraph}. 
\begin{figure}[tb]
\centering
\includegraphics[width=0.4\textwidth]{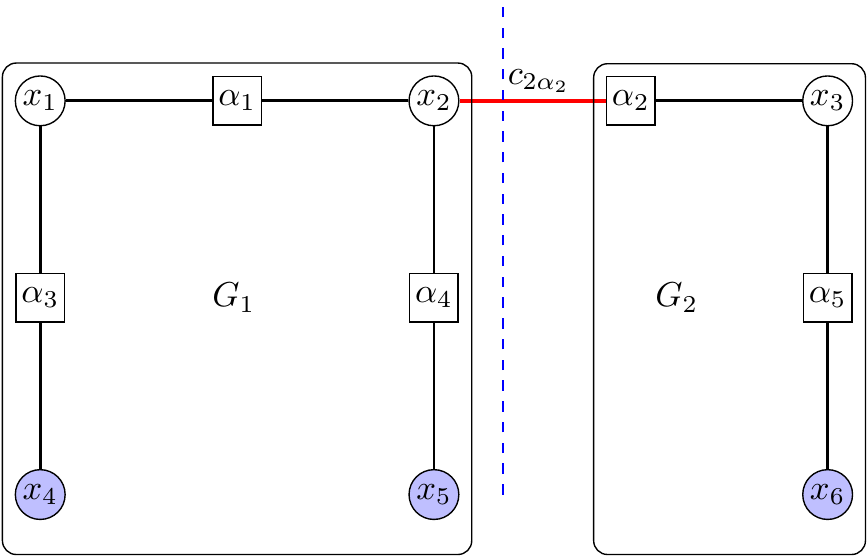}
\caption{Subgraphs $G_1$ and $G_2$ of $G$ by cutting edge $(2,\alpha_2)$.}
\label{fig:subgraph}
\end{figure}
Let $G_1=(V_1, F_1, E_1)$, then
	\begin{equation}\label{eq:edgeweight}
		c_{j_*\alpha_*} = \sum_{j \in V_1} c_j + \sum_{\alpha \in F_1} c_{\alpha}.
	\end{equation}
This relation \eqref{eq:edgeweight} can be established similarly to \eqref{eq:sum1}. It determines values for $c_{j\alpha}$ given $c_j, j\in V$ and $c_\alpha, \alpha\in F$.
Moreover, it guarantees that $c_{j\alpha}$ is non-negative, as long as $c_j$ and $c_\alpha$ are non-negative. 
Hence, based on \eqref{eq:sum1} and \eqref{eq:edgeweight}, we obtain a remarkably simple strategy to get a set of convex counting numbers $c_\alpha >0, c_j \geq 0, c_{j\alpha} \geq 0$.
\begin{prop}\label{prop:counting}
The following procedures lead to a feasible set of counting numbers $c_\alpha >0, c_j \geq 0, c_{j\alpha} \geq 0$ that solves \eqref{eq:counting number detail}:
\begin{itemize}
\item[i)] Choose $ c_\alpha>0, c_j\ge 0$ for $j\in V, \alpha\in F$ such that \eqref{eq:sum1} is satisfied;
\item[ii)] Iterate over each edge in the graph, split the graph along the edge and calculate the corresponding $c_{j\alpha}$ through \eqref{eq:edgeweight}. 
\end{itemize}
\end{prop}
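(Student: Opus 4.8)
The plan is to verify directly that the numbers produced by the two-step procedure satisfy both the sign requirements and the linear system \eqref{eq:counting number detail}. The sign conditions $c_\alpha>0$ and $c_j\ge 0$ hold by the choice made in step i), while $c_{j\alpha}\ge 0$ is immediate from the defining formula \eqref{eq:edgeweight}, since $c_{j_*\alpha_*}$ is a sum of the non-negative quantities $c_j$ and $c_\alpha$ over the nodes of the subtree $G_1$. Thus the entire burden of the proof is to confirm the two balance equations \eqref{eq:varible_node coef} and \eqref{eq:factor_node coef}, which I would establish by exploiting the defining property of a tree: deleting any single node disconnects $G$ into exactly one component per neighbor.

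First I would record the structural fact I rely on. Since $G$ is a factor tree, for a fixed node the incident edges are in bijection with the connected components obtained by deleting that node, and these components partition the remaining nodes of $G$. Concretely, cutting the edge $(j,\alpha)$ yields $G_1$ (containing the variable node $j$) and $G_2$ (containing the factor node $\alpha$); when $\alpha$ is fixed and $j$ ranges over $N(\alpha)$, the pieces $G_1$ are pairwise disjoint and cover $(V\cup F)\setminus\{\alpha\}$, whereas when $j$ is fixed and $\alpha$ ranges over $N(j)$, the pieces $G_2$ are pairwise disjoint and cover $(V\cup F)\setminus\{j\}$.

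To verify the factor-node equation \eqref{eq:factor_node coef}, I would fix $\alpha\in F$ and sum the defining formula \eqref{eq:edgeweight} over $j\in N(\alpha)$. By the partition statement just described, $\sum_{j\in N(\alpha)} c_{j\alpha}$ collects the weight $c_k$ for $k\in V$ and $c_\beta$ for $\beta\in F$ of every node other than $\alpha$, so adding $c_\alpha$ restores the full sum $\sum_{j\in V}c_j+\sum_{\alpha\in F}c_\alpha$, which equals $1$ by \eqref{eq:sum1}. The variable-node equation \eqref{eq:varible_node coef} is slightly more delicate because \eqref{eq:edgeweight} assigns to $c_{j\alpha}$ the weight of the side containing $j$, i.e.\ the large piece; I would first rewrite it by complementation as $c_{j\alpha}=1-w(G_2)$, where $w(G_2)=\sum_{k\in V_2}c_k+\sum_{\beta\in F_2}c_\beta$ is the weight of the subtree hanging off $\alpha$, again using \eqref{eq:sum1}. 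Summing over $\alpha\in N(j)$ and invoking the second partition statement (the pieces $G_2$ cover $(V\cup F)\setminus\{j\}$) gives $\sum_{\alpha\in N(j)}c_{j\alpha}=N_j-(1-c_j)$, which rearranges precisely to $c_j-\sum_{\alpha\in N(j)}c_{j\alpha}=1-N_j$.

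I expect the only real obstacle to be the bookkeeping of orientations: one must consistently track that \eqref{eq:edgeweight} always records the variable-node side of a cut, so that the factor-node balance uses the pieces directly while the variable-node balance uses their complements. Leaf variable nodes and degree-one factor nodes need no separate treatment, since the partition argument degenerates gracefully, a trivial subtree simply contributing a single node's weight. Once the two partition identities are stated cleanly, both equations in \eqref{eq:counting number detail} follow by a one-line summation, so I would present the structural observation carefully and keep the algebraic verification brief.
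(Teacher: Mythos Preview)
Your proposal is correct and follows essentially the same approach as the paper: both arguments exploit the tree property that deleting a node yields one component per incident edge, use the resulting partition of $(V\cup F)$ minus that node to sum the $c_{j\alpha}$'s, and invoke the complementation $c_{j\alpha}=1-w(G_2)$ via \eqref{eq:sum1} for the variable-node balance. The only cosmetic difference is that the paper writes out the variable-node computation and declares the factor-node one ``similar,'' whereas you treat the factor-node equation directly (without complementation) and make the orientation bookkeeping more explicit.
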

\begin{proof}
Obviously, by construction, $c_\alpha > 0, c_j \geq 0, c_{j\alpha} \geq 0$ for all $j\in V, \alpha\in F$. We next show that they satisfy \eqref{eq:counting number detail}. To this end, denote the two subgraphs $G_1$ and $G_2$ discussed earlier by cutting edge $(j, \alpha)$ by $G_{1,j\alpha}=(V_{1,j\alpha},F_{1,j\alpha},E_{1,j\alpha})$ and $G_{2,j\alpha}=(V_{2,j\alpha},F_{2,j\alpha},E_{2,j\alpha})$, respectively. It follows, for any $j\in V$, 
\begin{equation*}
    \begin{aligned}
        c_j - \sum_{\alpha \in N(j)} c_{j\alpha} &= c_j - \sum_{\alpha \in N(j)}
        (
            \sum_{i\in V_{1,j\alpha}} c_{i} + 
            \sum_{\beta \in F_{1,j\alpha}}c_{\beta}
        ) \\
        & = c_j - \sum_{\alpha \in N(j)}
        ( 1 -
            \sum_{i\in V_{2,j\alpha}} c_{i} - 
            \sum_{\beta \in F_{2,j\alpha}}c_{\beta}
        ) \\
        & = \sum_{i\in V} c_i + \sum_{\beta \in F} c_{\beta} - N_j \times 1  = 1 - N_j,
    \end{aligned}
\end{equation*}
where the second last equality is due to the fact that $V = \{c_j\} \cup(\cup_{\alpha \in N(j)} V_{2,j\alpha})$ and $F = \cup_{\alpha \in N(j)} F_{2,j\alpha}$. This establishes \eqref{eq:varible_node coef}. The proof of \eqref{eq:factor_node coef} is similar.
\end{proof}

Proposition \ref{prop:counting} makes constructing a feasible set of counting numbers that induces convex fractional free energy (see Lemma \ref{lem:convex}) as easy as finding $c_\alpha > 0, c_j \geq 0, j\in V, \alpha\in F$ that satisfy \eqref{eq:sum1}. One choice we found effective is 
	\[
		c_j = c_\alpha = \frac{1}{|V|+|F|}
	\]
for all $j\in V, \alpha\in F$. For the specific example in Figure \ref{fig:subgraph}, this choice leads to $c_j = c_\alpha = \frac{1}{11}$. The value of $c_{2\alpha_2}$ is $\frac{7}{11}$ by \eqref{eq:edgeweight}.

\section{Numerical examples}\label{sec:example}
In this section we present two sets of numerical experiments based on our framework. The first set of experiments is to validate the correctness of Constrained Norm-Product and Iterative Scaling Belief Propagation algorithm, and compare their performance. The second experiment is to illustrate potential applications of our framework in nonlinear filtering for collective dynamics.

\subsection{Performance evaluation}

We implement three algorithms: Constrained Norm-Product (CNP) (Algorithm \ref{alg:constrained norm-product}), Iterative Scaling Belief Propagation (ISBP) (Algorithm \ref{alg_iterative_scaling}) and Vanilla Iterative Scaling (Vanilla IS) (Algorithm \ref{alg:sinkhorn}) on four different type of graphs: line graphs, hidden Markov models (HMMs), and two star shape graphs (see Figure \ref{fig:graph}). 
\begin{figure}[tb]
    \begin{subfigure}{.25\textwidth}
    \centering
    \vspace{1.5cm}
    \includegraphics[scale=0.45]{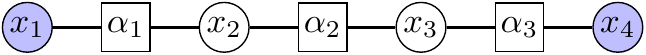}
    \vspace{1.5cm}
    \caption{Line}
    \label{fig:grapha}
    \end{subfigure}%
    \centering
    \begin{subfigure}{.25\textwidth}
    \centering
    \vspace{0.6cm}
    \includegraphics[scale=0.45]{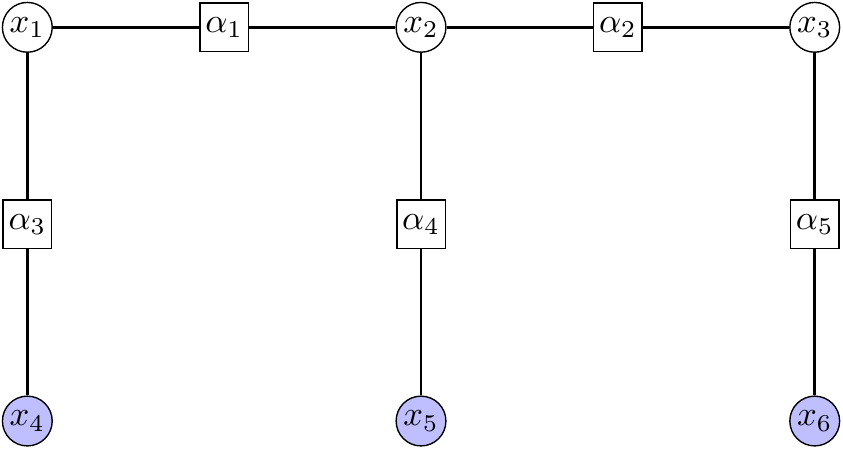}
    \vspace{0.6cm}
    \caption{HMM}
    \label{fig:graphb}
    \end{subfigure}%
    \begin{subfigure}{.25\textwidth}
    \centering
    \includegraphics[scale=0.45]{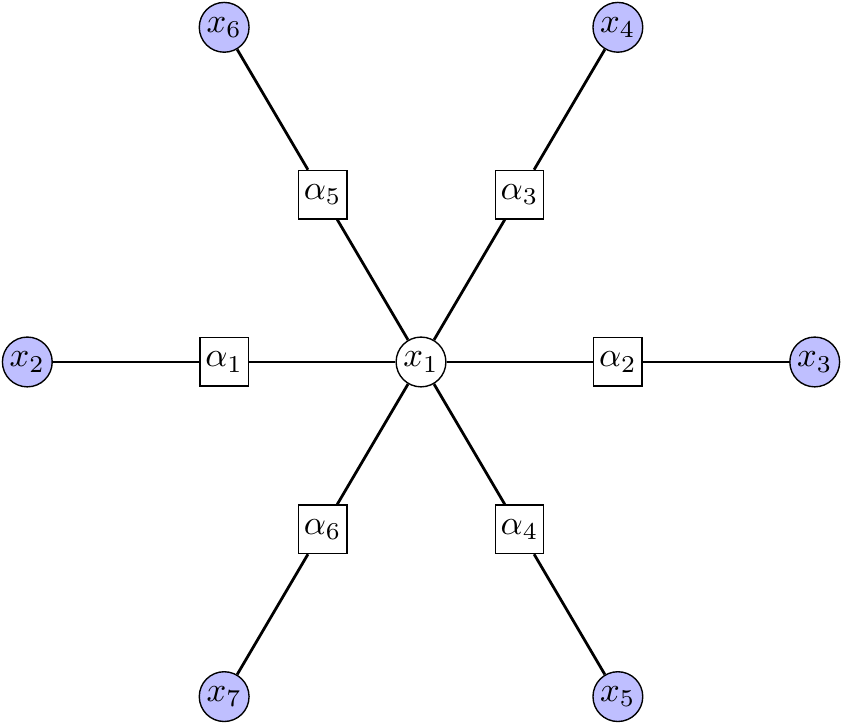}
    \caption{Star}
    \label{fig:graphc}
    \end{subfigure}%
    \begin{subfigure}{.25\textwidth}
    \centering
    \includegraphics[scale=0.45]{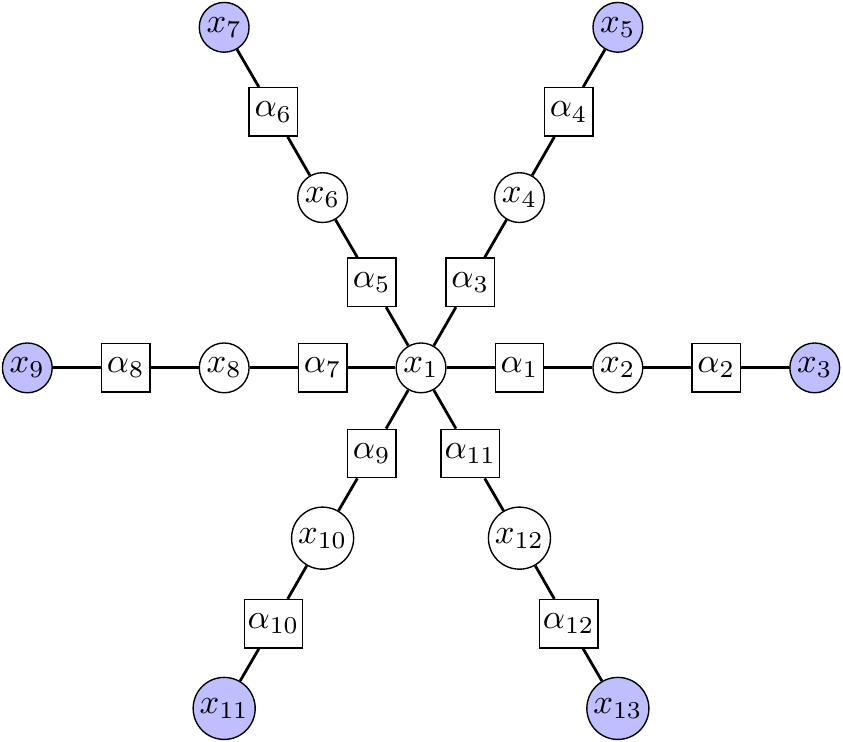}
    \caption{Long Star}
    \label{fig:graphd}
    \end{subfigure}%
    \caption{Testing graphical models}
    \label{fig:graph}
\end{figure}
For the line graph (Figure \ref{fig:grapha}), the constraints of marginal distribution are on the head and tail nodes. This corresponds to a standard OT problem with two marginals. HMMs (Figure \ref{fig:graphb}) are widely used in many real applications. In the standard HMM framework, the measurements are deterministic values, which can be equivalently viewed as Dirac distributions on the observation/measurement nodes. In our MOT framework, these observation nodes are associated with marginal distribution constraints, which can be viewed as a relaxation of the standard HMM where the deterministic measurement are replaced with ``soft'' stochastic measurements. 
The star graph (Figure \ref{fig:graphc}) structure has marginal constraints on all the leaf nodes. This corresponds to the Barycenter problem over the Wasserstein space \cite{AguCar11}, which has found applications in information fusion \cite{ElvHaaJakKar20}.

We test the algorithms with several different configurations. In particular, we vary the number of discrete states at each variable node $d_1=d_2=\cdots=d_J=d$ as well as the number of nodes $J$ in the tests. Throughout, we let $\epsilon=1$. The factor potentials and the counting numbers are set consistently for all the experiments. In particular, the factor potentials are chosen in a way such that the variable nodes connecting to a common factor node are strongly correlated. In our examples, all the factors are connected to only two variable nodes. This choice amounts to taking diagonally dominant matrices as potentials. The counting numbers are selected using the strategy in Proposition \ref{prop:counting} by setting $c_j=0,\,\forall j\in V$ and $c_\alpha=c_\beta,\,\forall \alpha,\beta\in F$.
In all our experiments, we observe that the three algorithms converge to the same solutions.
To fairly compare the computation complexity of the three algorithms, we use a unified stopping criteria; the algorithms stop when the relative error with respect to the ``ground truth'' $\mb^*$ in terms of the 1-norm is less than $10^{-4}$. The ``ground truth'' $\mb^*$ is obtained by running one of the algorithms (e.g., Vanilla IS) for sufficiently many iterations so that the duality gap is less than $10^{-8}$. 

%
\begin{figure}[tb]
    \centering
    \begin{subfigure}{.25\textwidth}
    \centering
    \includegraphics[width=1\textwidth]{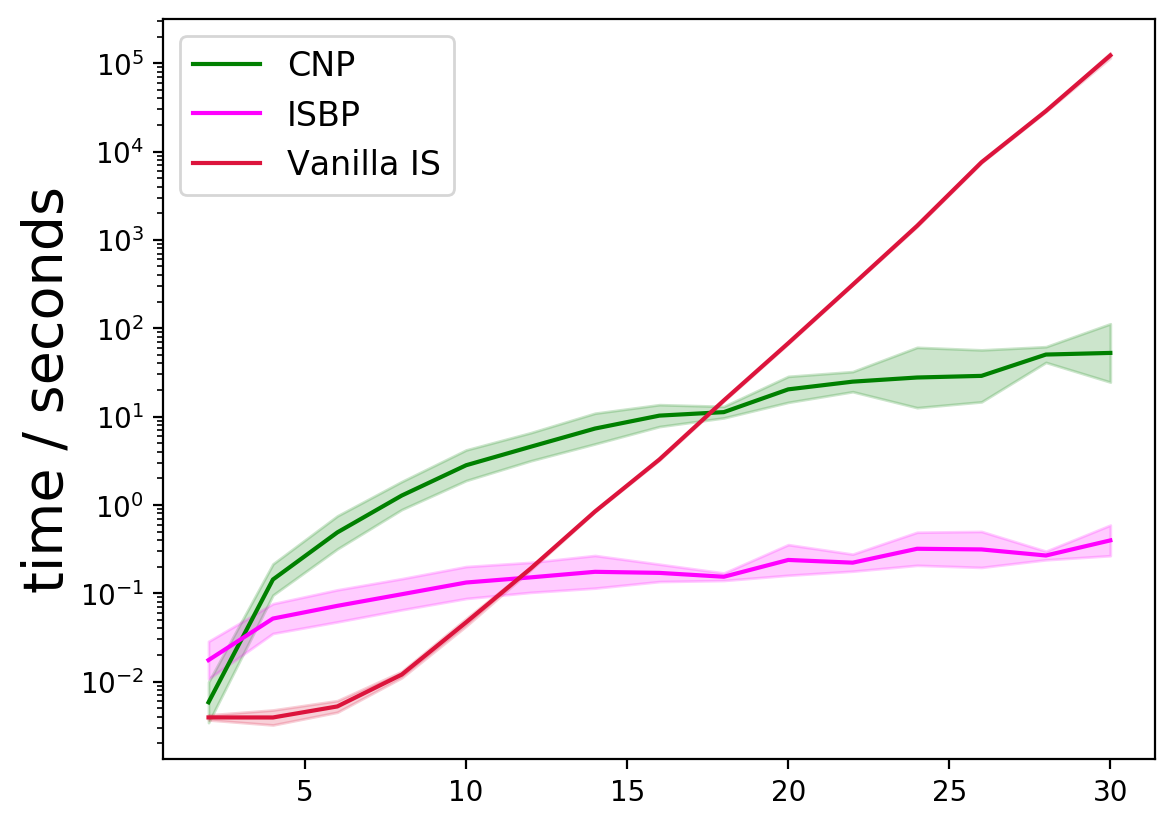}
    \end{subfigure}%
    \begin{subfigure}{.24\textwidth}
    \centering
    \includegraphics[width=1\textwidth]{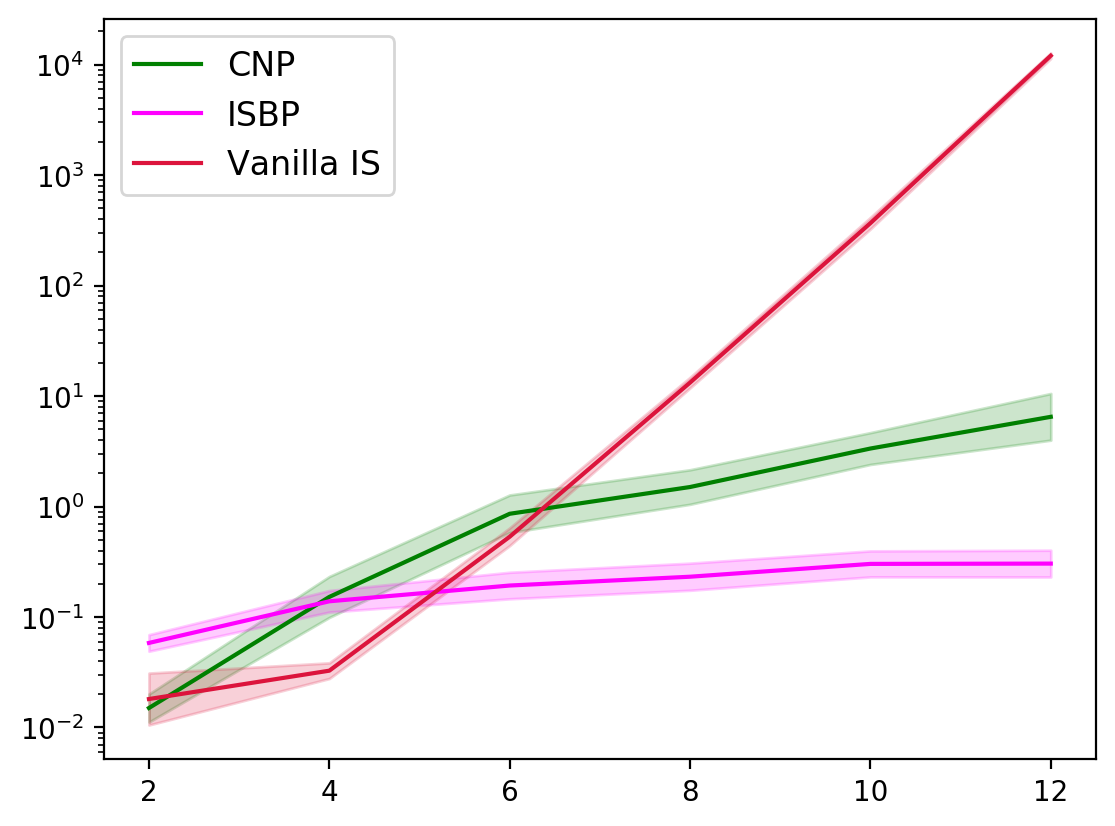}
    \end{subfigure}%
    \begin{subfigure}{.24\textwidth}
    \centering
    \includegraphics[width=1\textwidth]{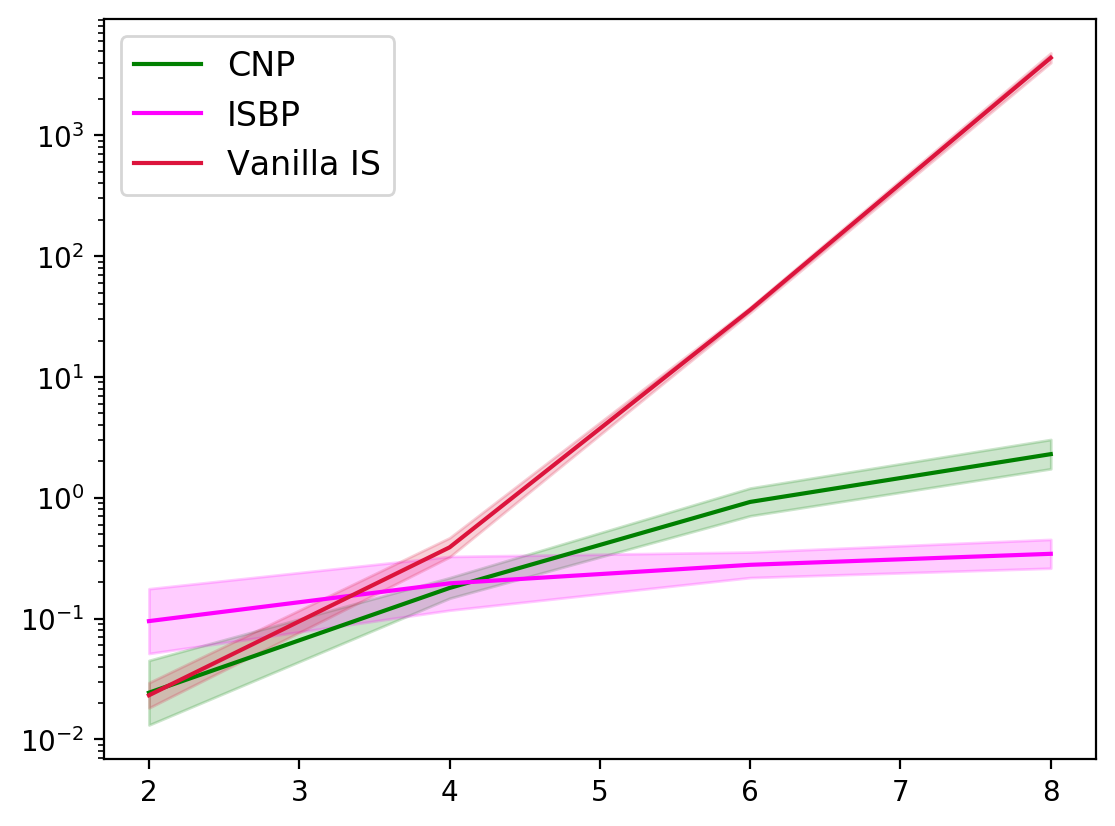}
    \end{subfigure}

    \centering
    \begin{subfigure}{.25\textwidth}
    \centering
    \includegraphics[width=1\textwidth]{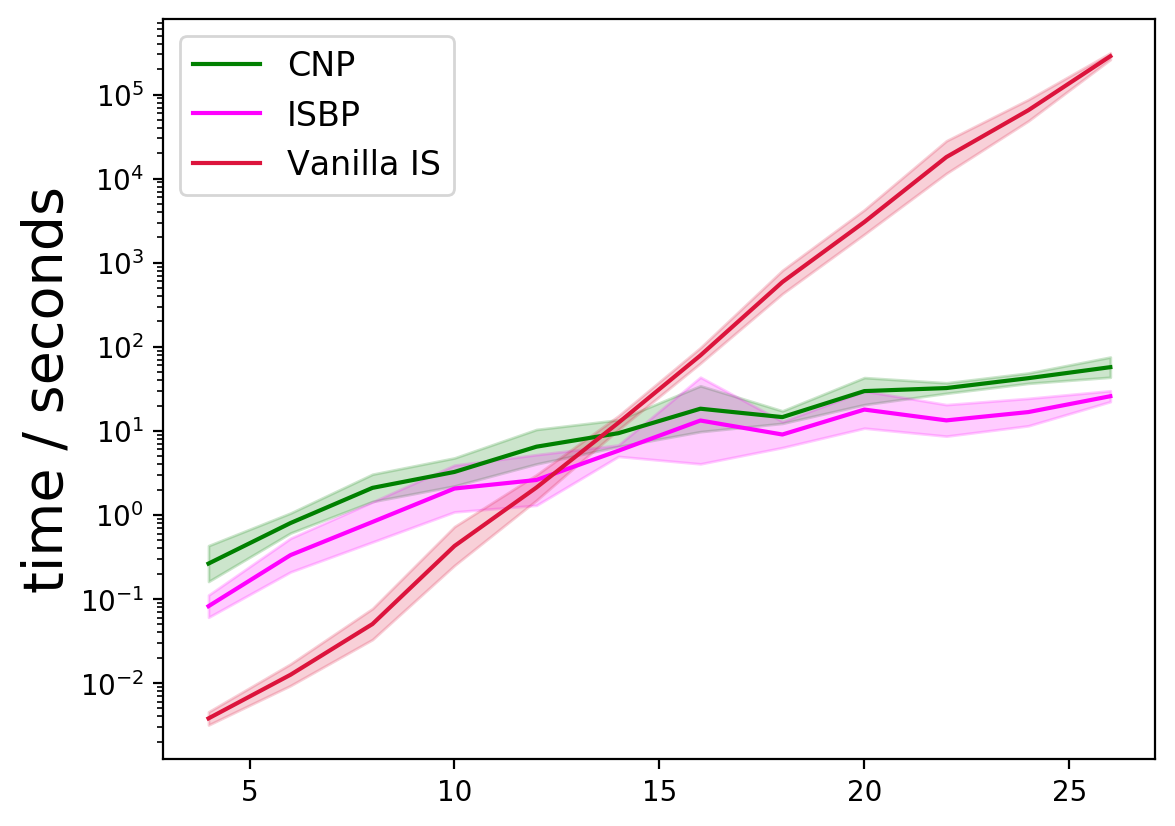}
    \end{subfigure}%
    \begin{subfigure}{.24\textwidth}
    \centering
    \includegraphics[width=1\textwidth]{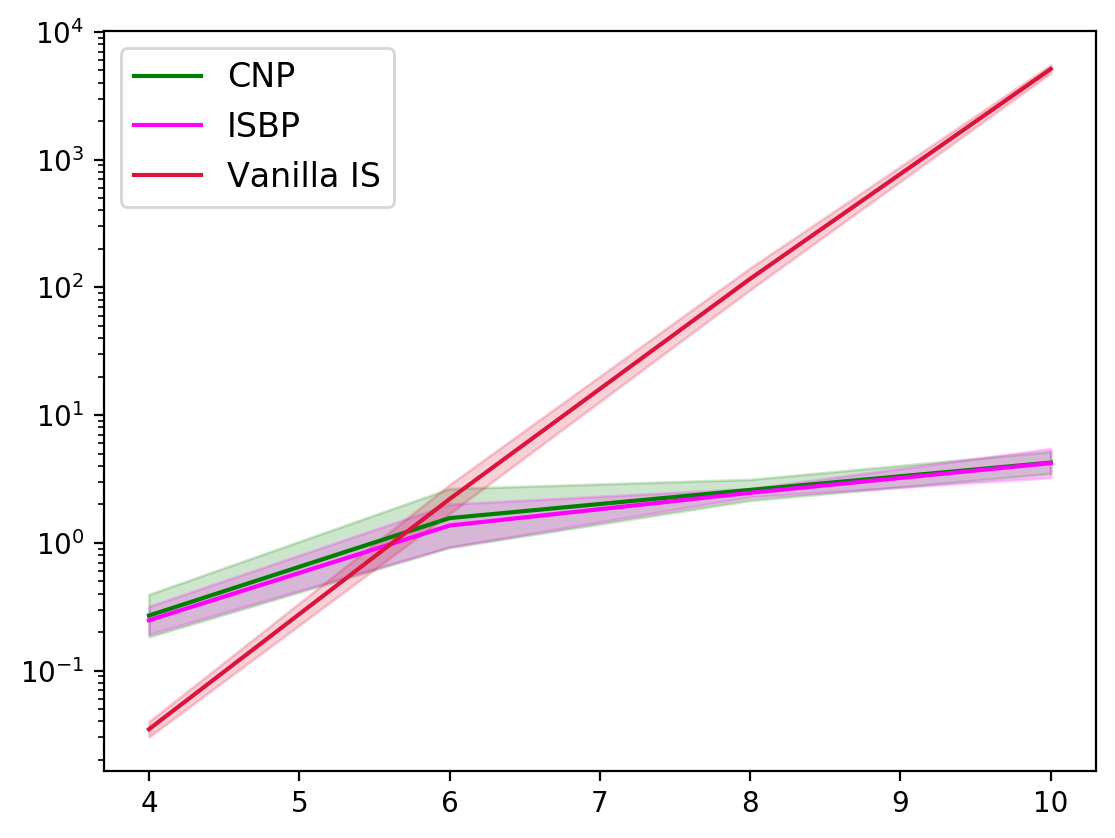}
    \end{subfigure}%
    \begin{subfigure}{.24\textwidth}
    \centering
    \includegraphics[width=1\textwidth]{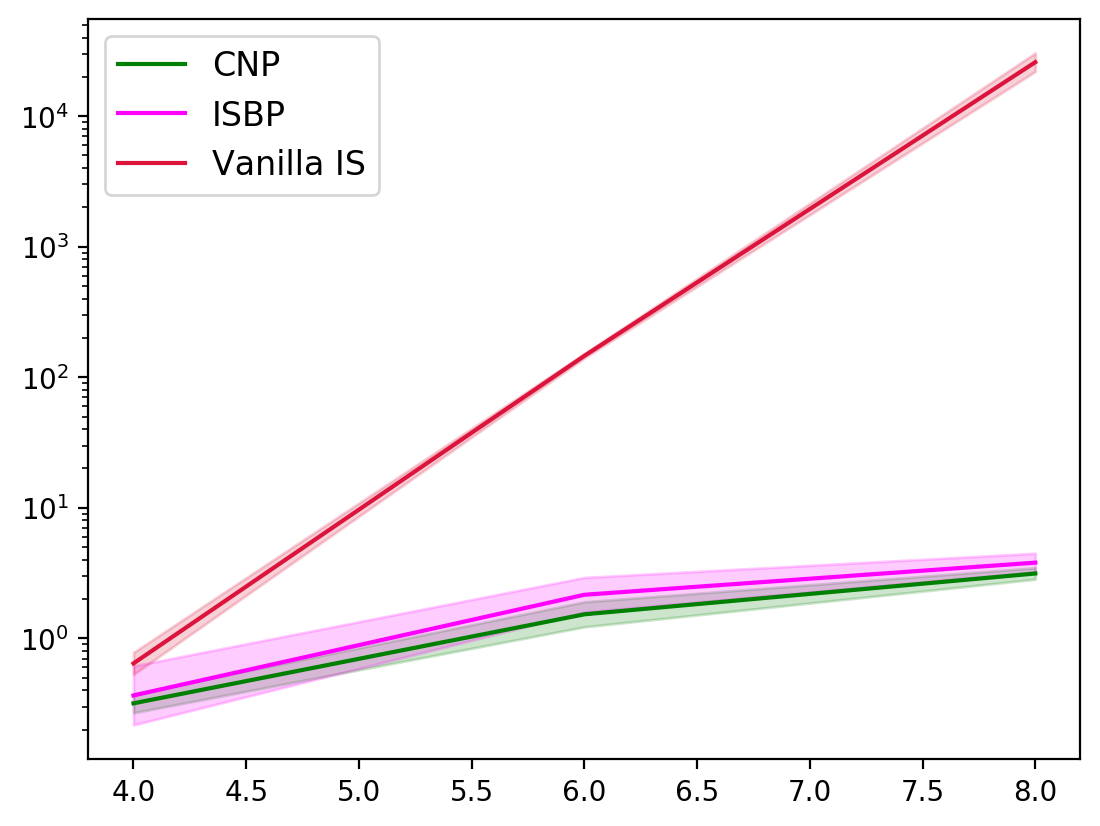}
    \end{subfigure}

    \centering
    \begin{subfigure}{.25\textwidth}
    \centering
    \includegraphics[width=1\textwidth]{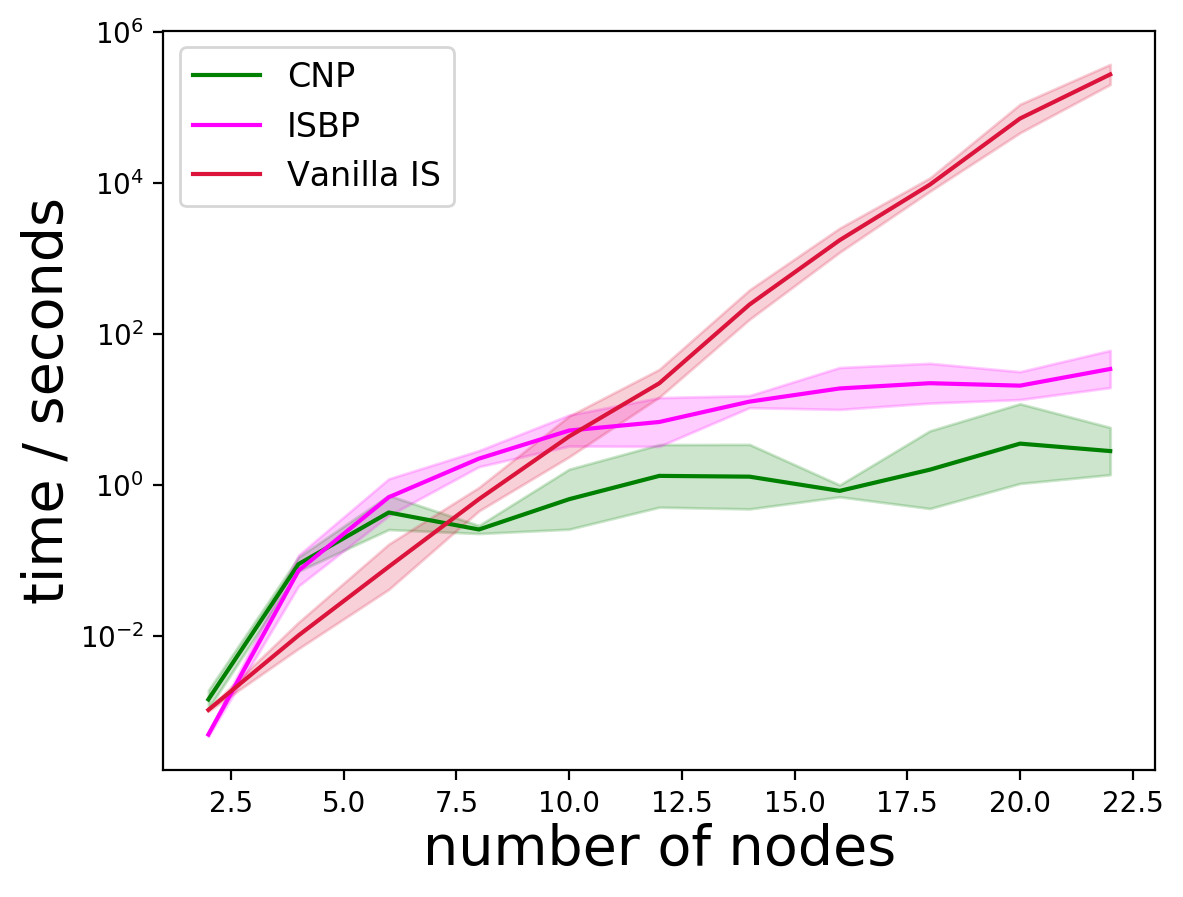}
    \caption{$d=2$}
    \end{subfigure}%
    \begin{subfigure}{.24\textwidth}
    \centering
    \includegraphics[width=1\textwidth]{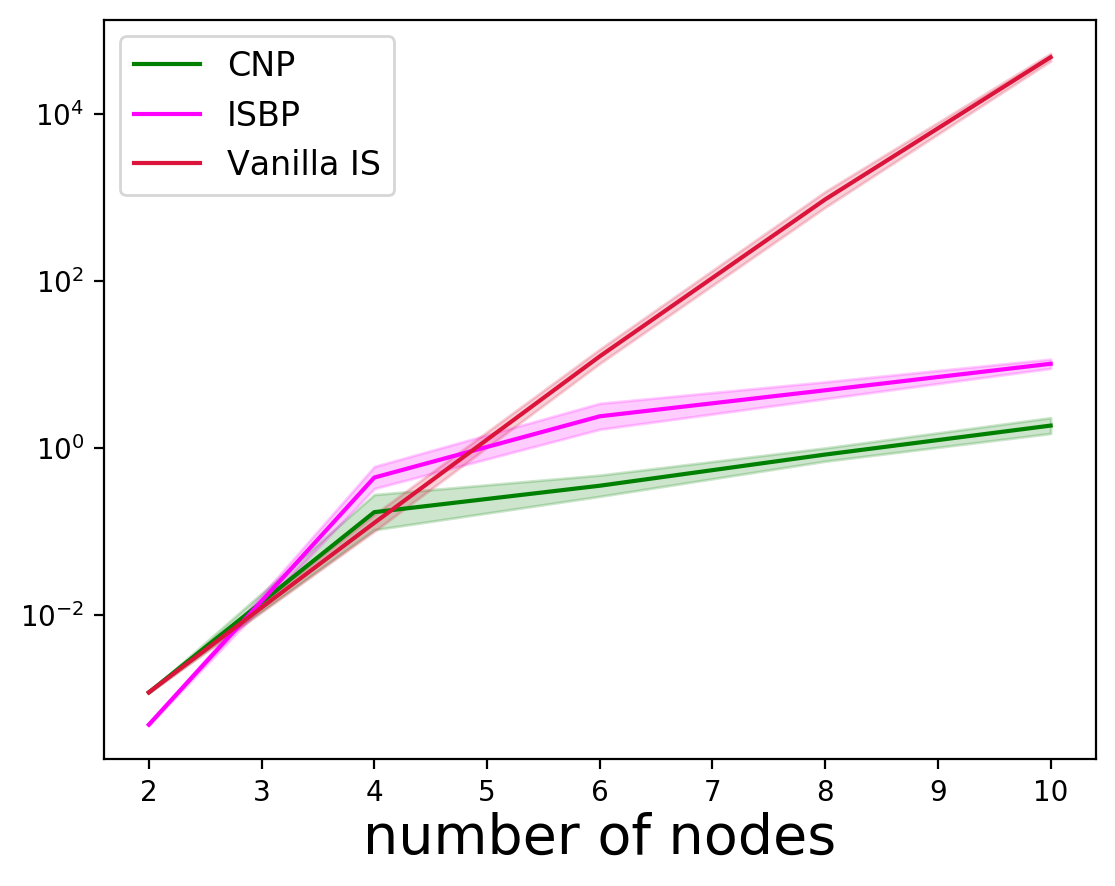}
    \caption{$d=5$}
    \end{subfigure}%
    \begin{subfigure}{.24\textwidth}
    \centering
    \includegraphics[width=1\textwidth]{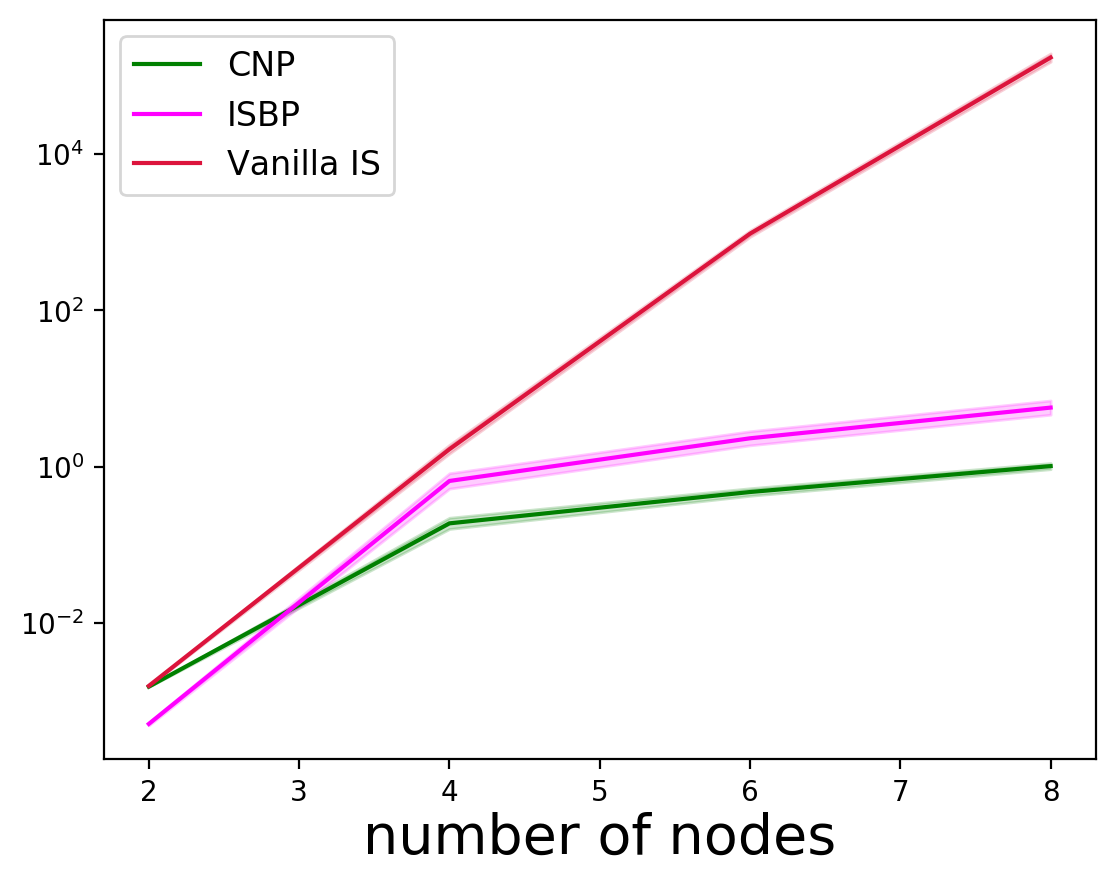}
    \caption{$d=10$}
    \end{subfigure}
    \caption{Comparison among CNP, ISBP and Vanilla IS. The three rows, from top to bottom, correspond to examples with line graph, HMM, and star graph, respectively. The subplots in the same column have the same value of $d$.}
    \label{fig:comparision}
\end{figure}
Figure~\ref{fig:comparision} depicts the evaluation results of the three algorithms under different configurations. The $y$-axis shows the total time consumption of the algorithms before they stop. The $x$-axis represents the size of the graphs, more specifically, the number of nodes $J$ of the graphs being used. Thus, each subplot showcases the relation between computational efficiency and the number of nodes of the graphs. The dependence of the computational complexity on the number $d$ of discrete states at each node can be understood by comparing the subplots along each rows. Each row of subplots corresponds to a type of graph. Thus, the effect of the graph topology on the computational complexity can be captured by comparing the subplots in the same column. 
From the results it can be seen that, for all types of graphs, and all values of $d$, the complexity of the Vanilla Iterative Scaling grows exponentially as the number of nodes $J$ increases. In contrast, both CNP and ISBP scale much better than Vanilla IS when $J$ increases. Moreover, CNP and ISBP seem to be less sensitive to the number of discrete states $d$ at each node, compared with Vanilla IS. 

To comprehensively compare the performances of ISBP and CNP, we conduct several more experiments on graphs of larger sizes where the Vanilla Iterative Scaling is no longer applicable. Besides the three graphs used in the previous experiment, we study an additional star shape graph with more nodes on each branch (Figure \ref{fig:graphd}). The stopping criteria is the same as before; the algorithms stop when the relative error with respect to a ``ground truth'' $\mb^*$ in terms of 1-norm is less than $10^{-4}$. Since Vanilla IS is computationally forbidden for large $J$, we run the ISBP algorithm for sufficiently many iterations to obtain $\mb^*$. 
The experiments results are summarized in Figure \ref{fig:1000}. The presentation of the results in Figure \ref{fig:1000} is similar to that in Figure \ref{fig:comparision}, so that we can understand the dependence of the computational complexity over the number of discrete states $d$, number of nodes $J$ and graph topology. 
From the figures we can see that the two algorithms CNP and ISBP, have comparable performances. Both of them scale well when $J$ and $d$ increase. ISBP behaves better on line graphs and HMMs, while CNP is faster on star shaped graphs. 
%
%
%
%
\begin{figure}[tb]
    \centering
    \begin{subfigure}{.25\textwidth}
    \centering
    \includegraphics[width=1\textwidth]{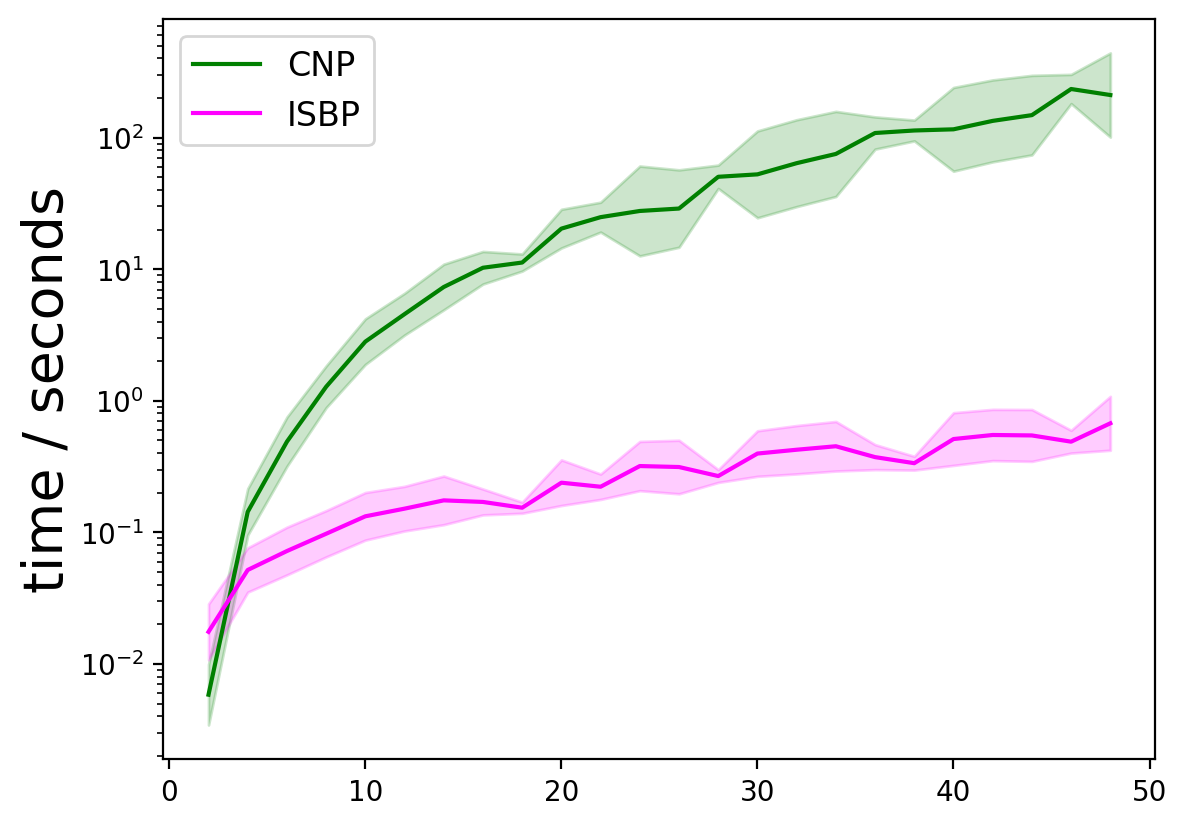}
    \end{subfigure}%
    \begin{subfigure}{.24\textwidth}
    \centering
    \includegraphics[width=1\textwidth]{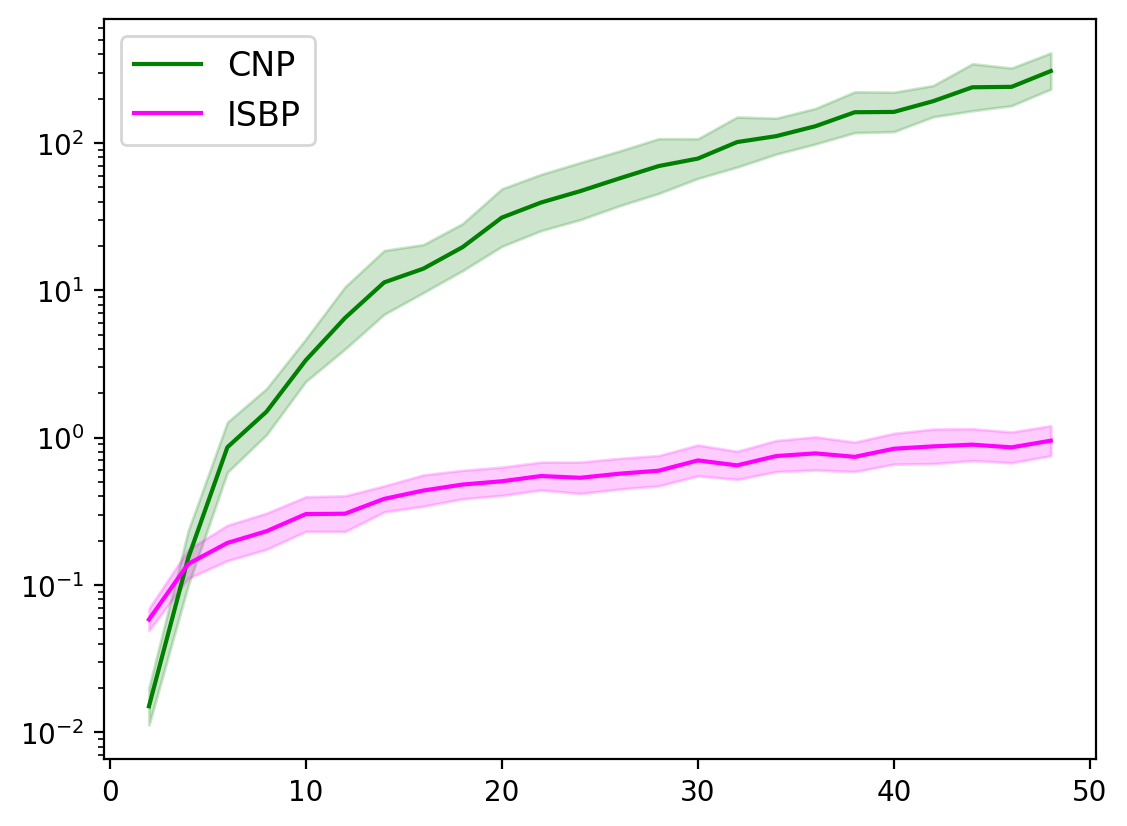}
    \end{subfigure}%
    \begin{subfigure}{.24\textwidth}
    \centering
    \includegraphics[width=1\textwidth]{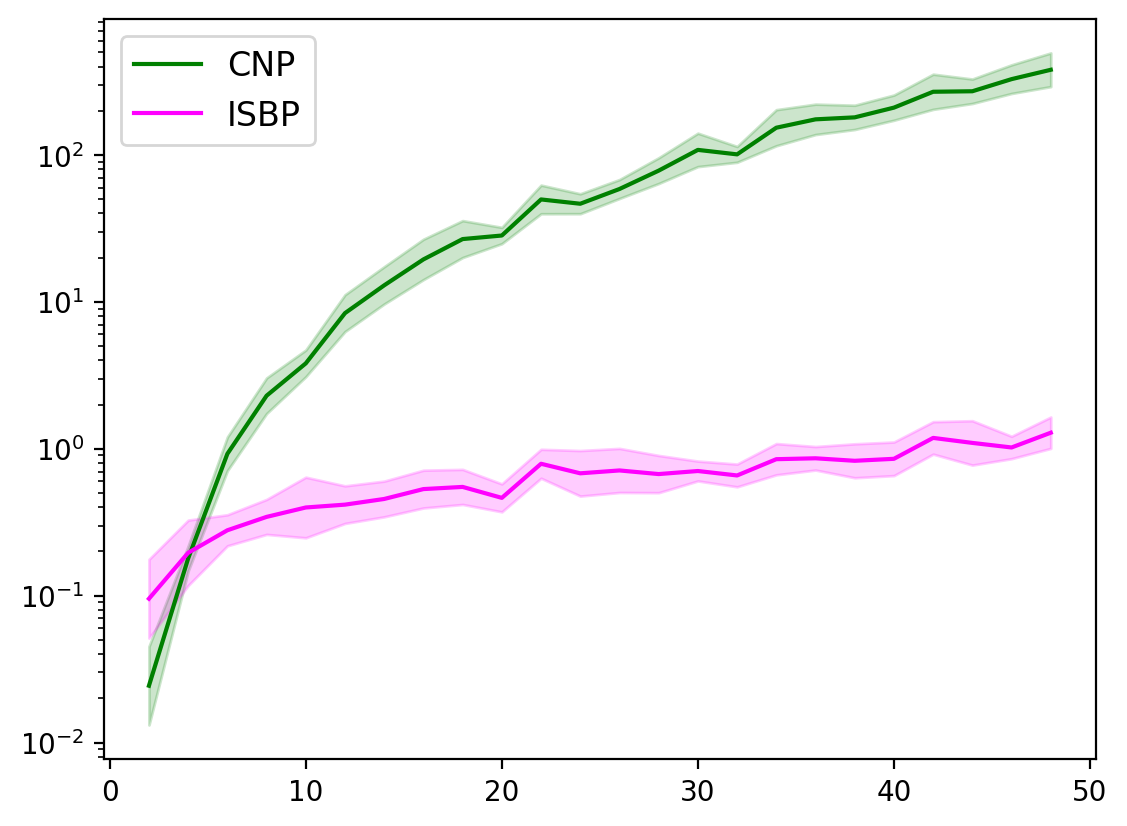}
    \end{subfigure}%
    \begin{subfigure}{.24\textwidth}
    \centering
    \includegraphics[width=1\textwidth]{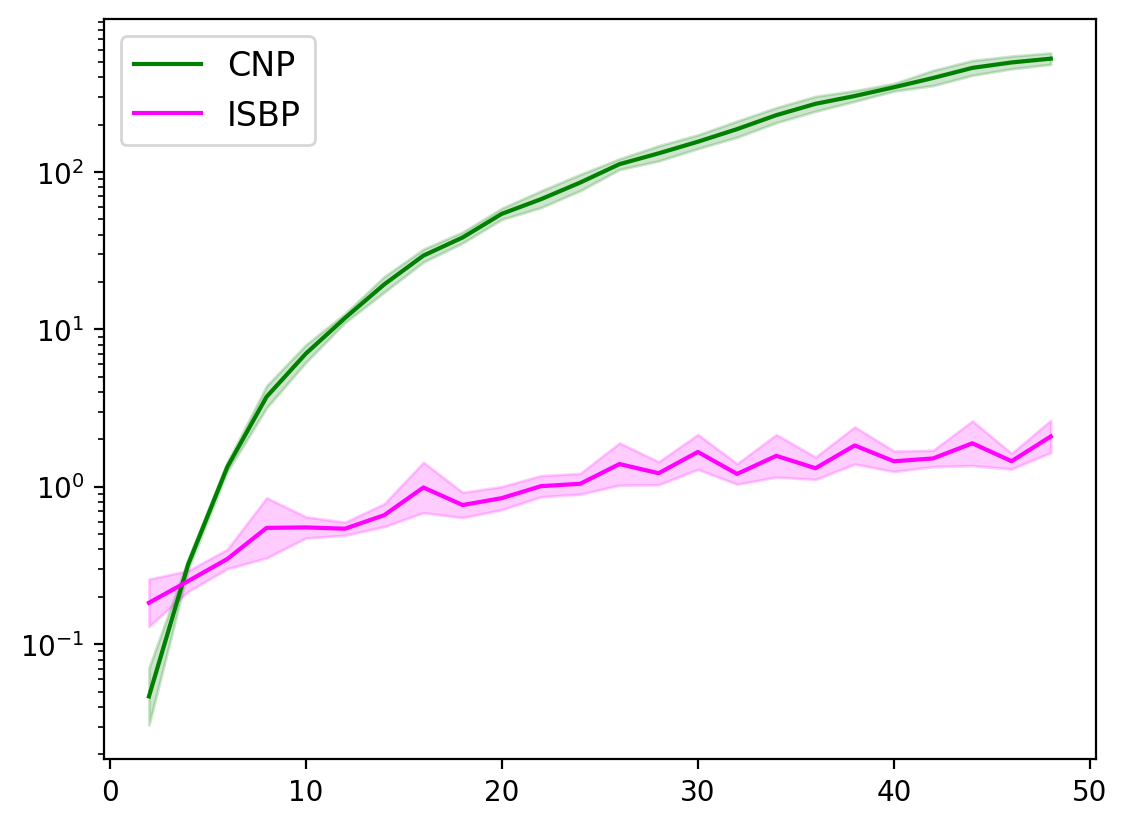}
    \end{subfigure}
    
    \centering
    \begin{subfigure}{.25\textwidth}
    \centering
    \includegraphics[width=1\textwidth]{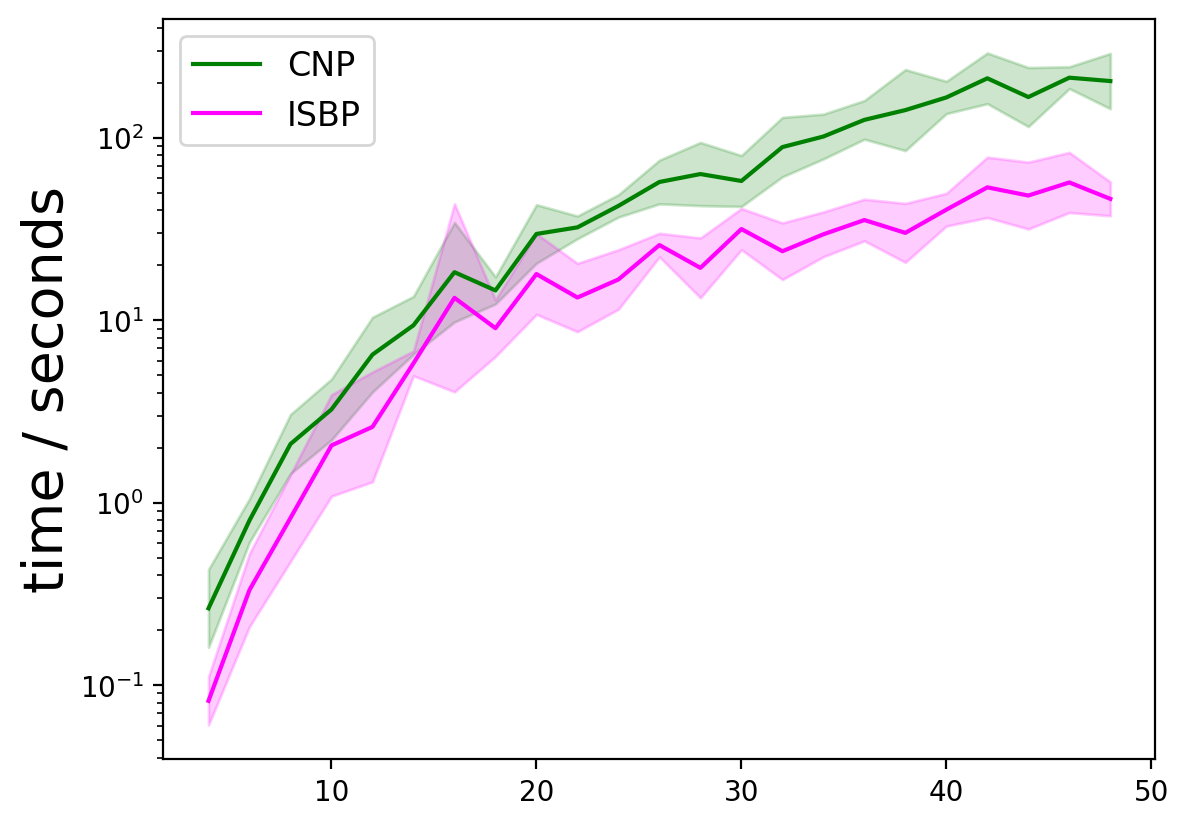}
    \end{subfigure}%
    \begin{subfigure}{.24\textwidth}
    \centering
    \includegraphics[width=1\textwidth]{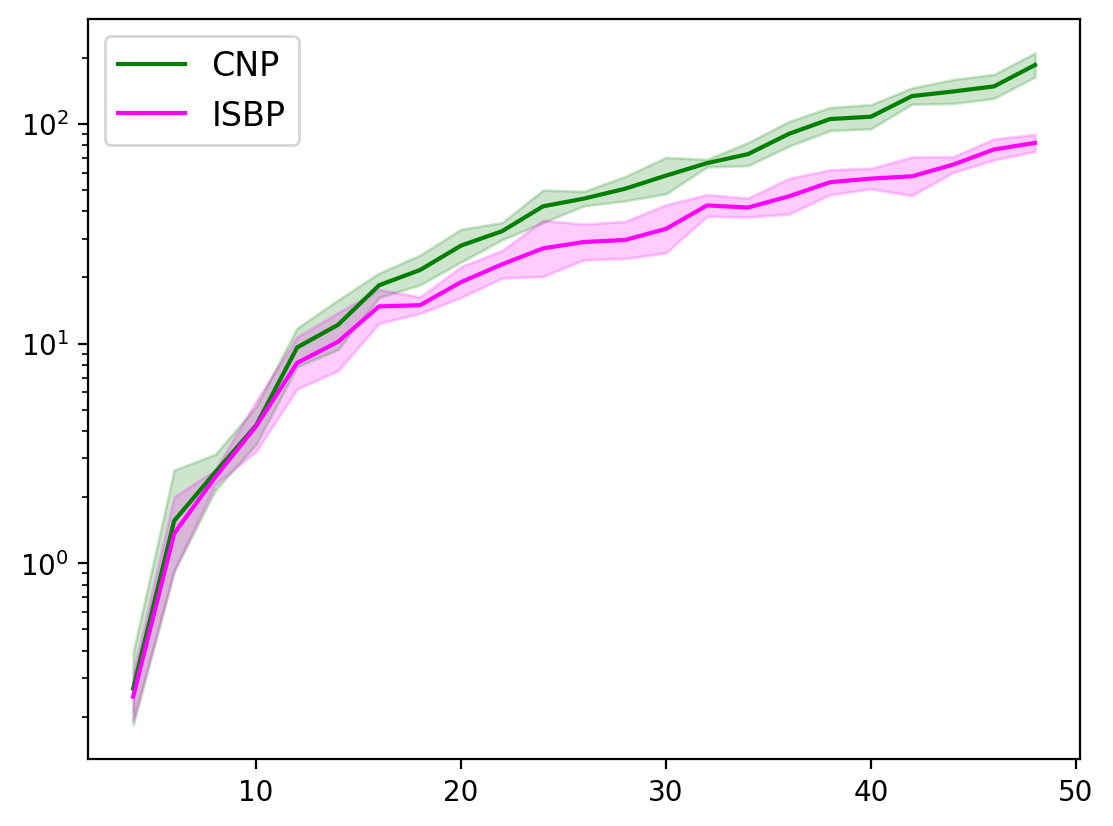}
    \end{subfigure}%
    \begin{subfigure}{.24\textwidth}
    \centering
    \includegraphics[width=1\textwidth]{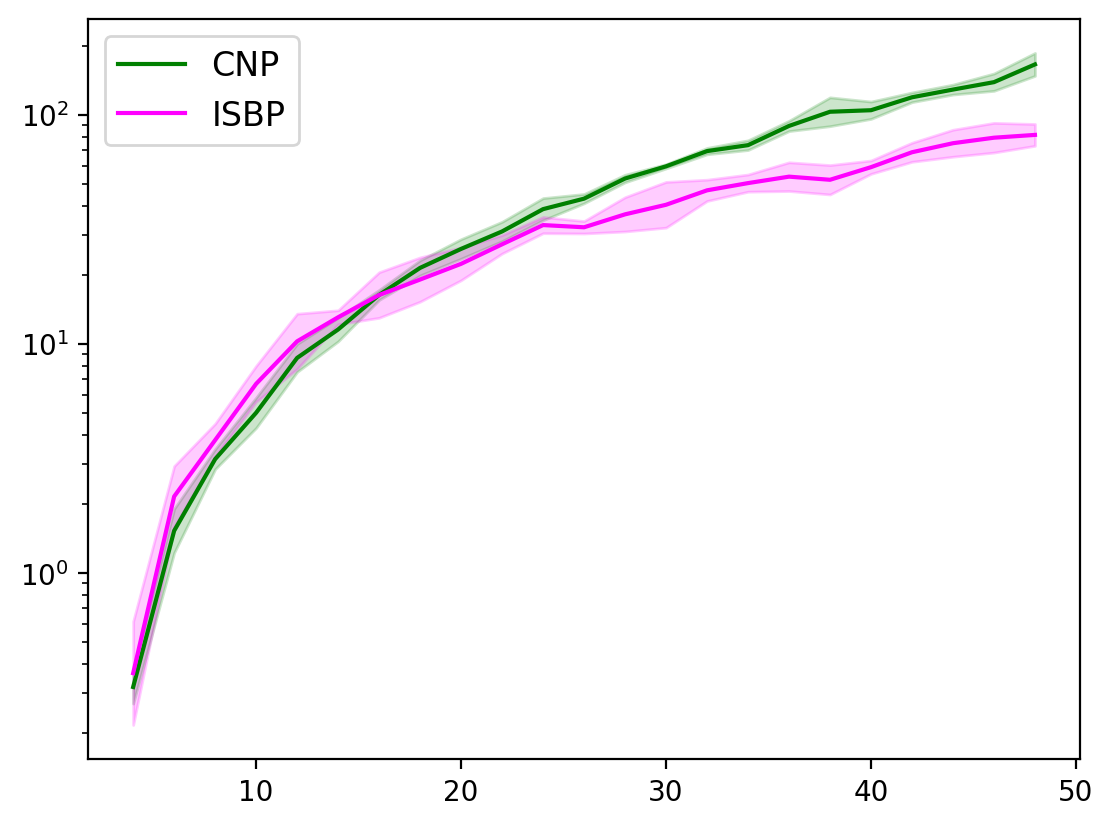}
    \end{subfigure}%
    \begin{subfigure}{.24\textwidth}
    \centering
    \includegraphics[width=1\textwidth]{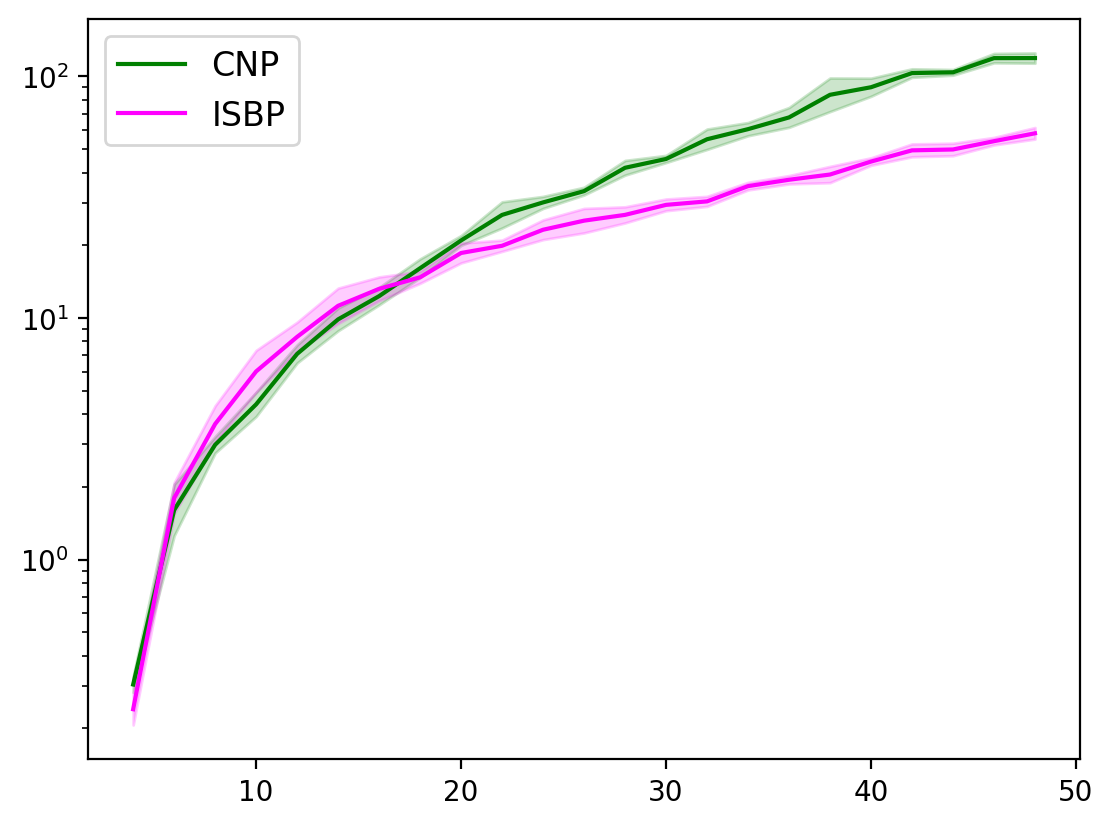}
    \end{subfigure}
    
    \centering
    \begin{subfigure}{.25\textwidth}
    \centering
    \includegraphics[width=1\textwidth]{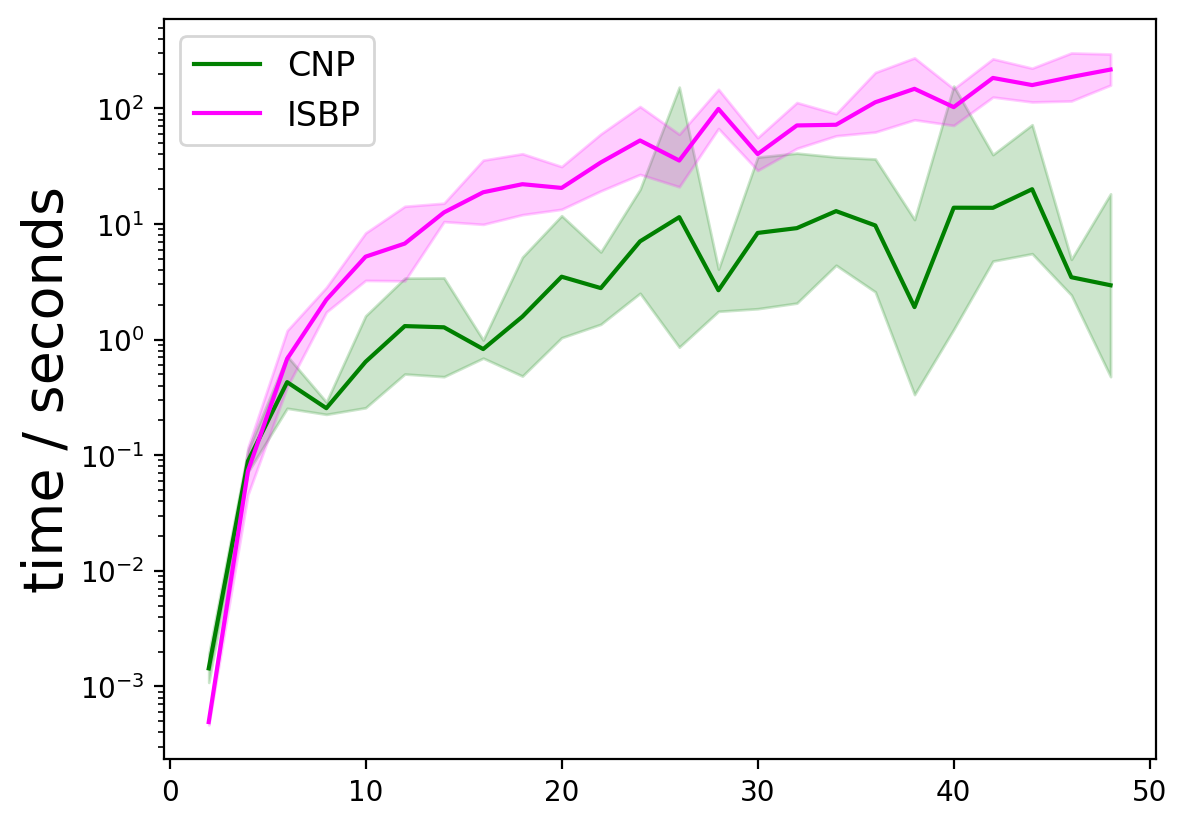}
    \end{subfigure}%
    \begin{subfigure}{.24\textwidth}
    \centering
    \includegraphics[width=1\textwidth]{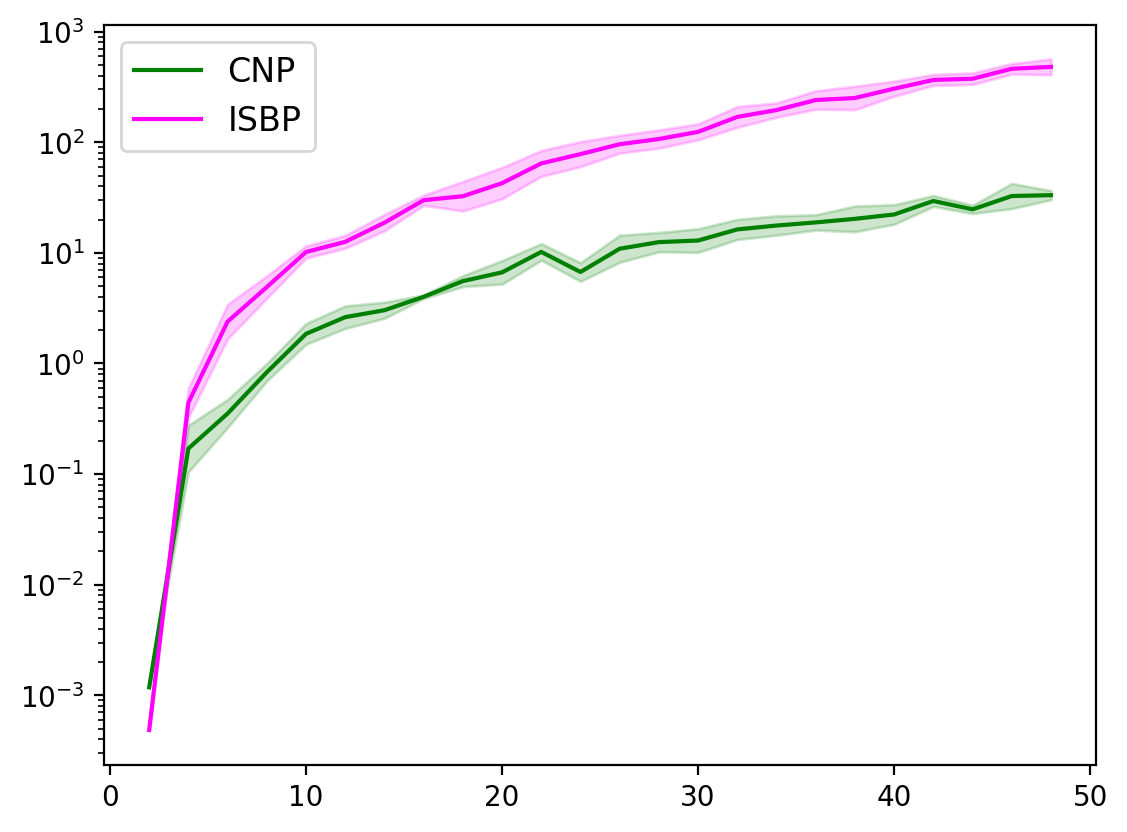}
    \end{subfigure}%
    \begin{subfigure}{.24\textwidth}
    \centering
    \includegraphics[width=1\textwidth]{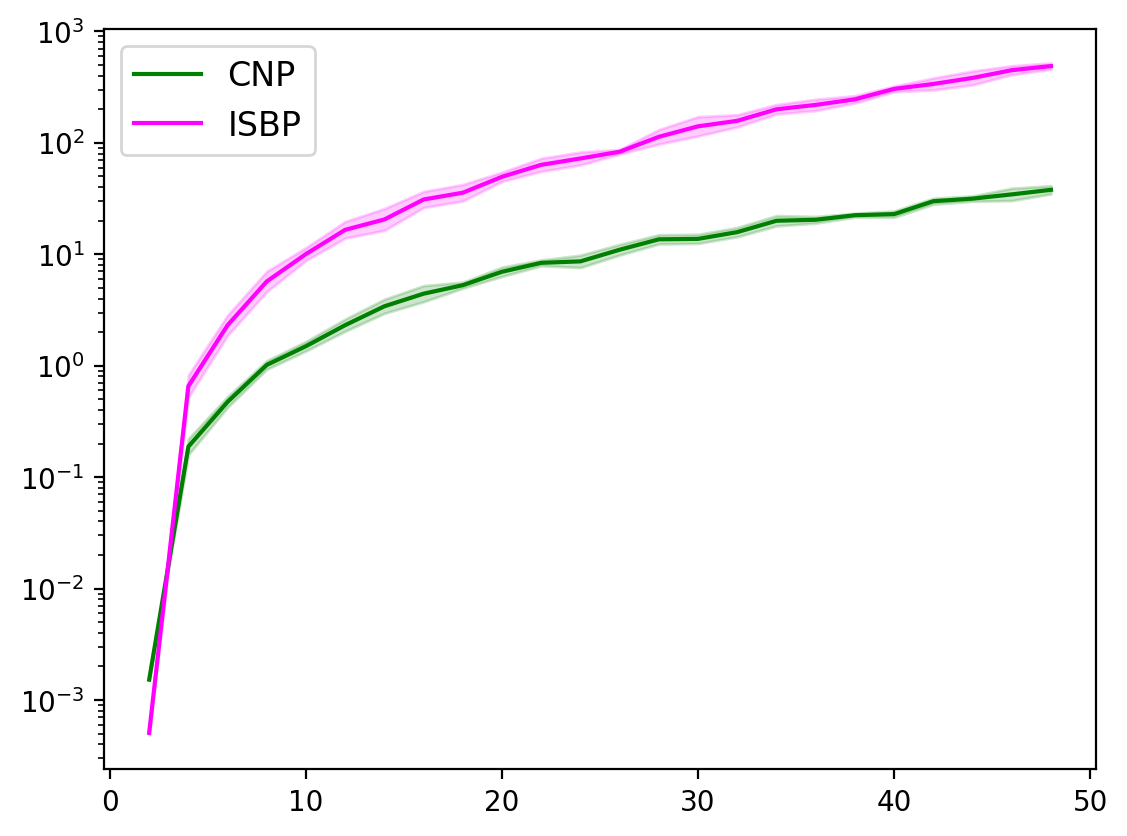}
    \end{subfigure}%
    \begin{subfigure}{.24\textwidth}
    \centering
    \includegraphics[width=1\textwidth]{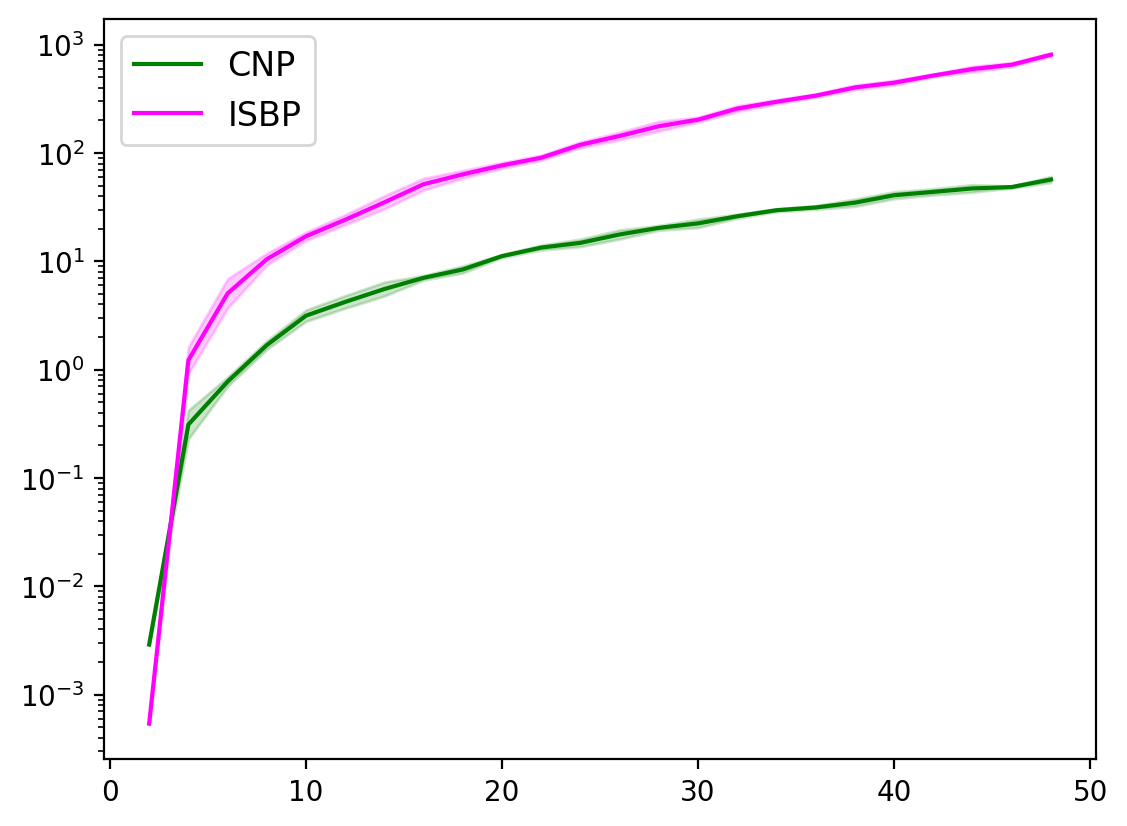}
    \end{subfigure}

    \centering
    \begin{subfigure}{.25\textwidth}
    \centering
    \includegraphics[width=1\textwidth]{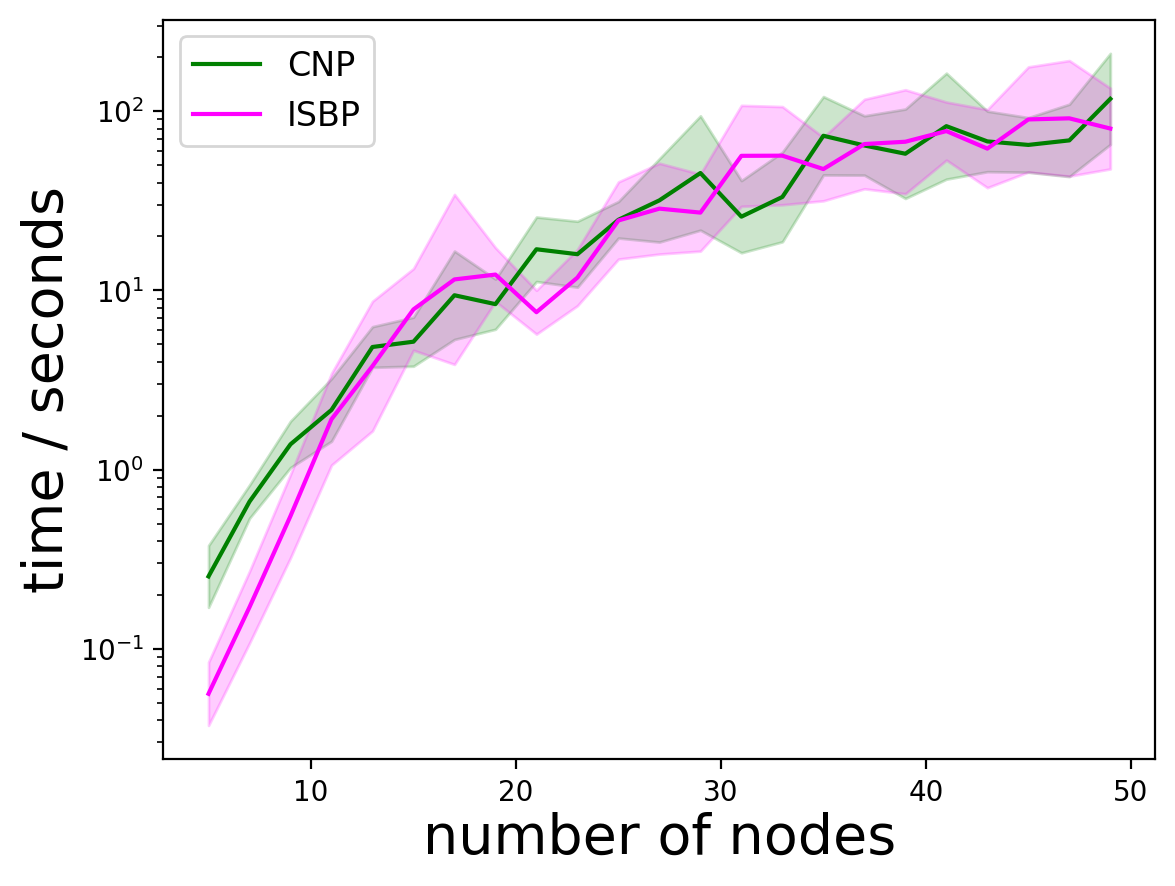}
    \caption{$d=2$}
    \end{subfigure}%
    \begin{subfigure}{.24\textwidth}
    \centering
    \includegraphics[width=1\textwidth]{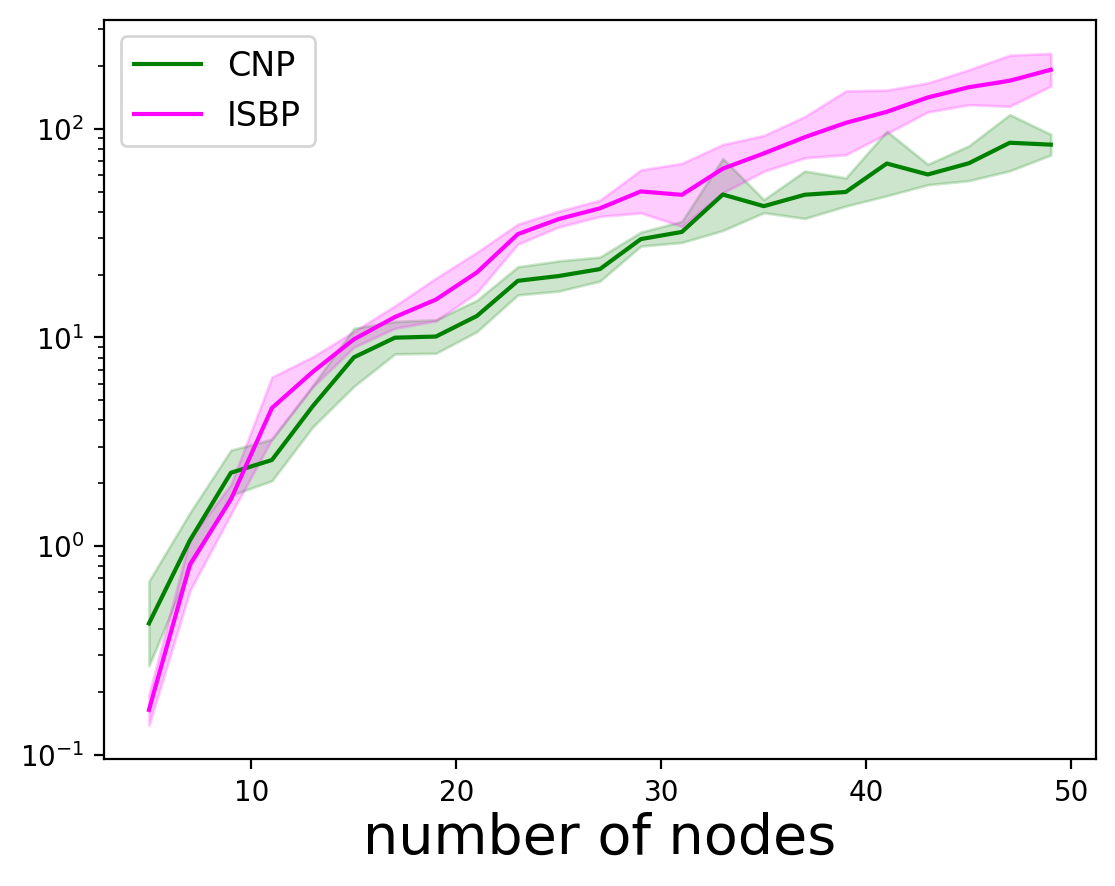}
    \caption{$d=5$}
    \end{subfigure}%
    \begin{subfigure}{.24\textwidth}
    \centering
    \includegraphics[width=1\textwidth]{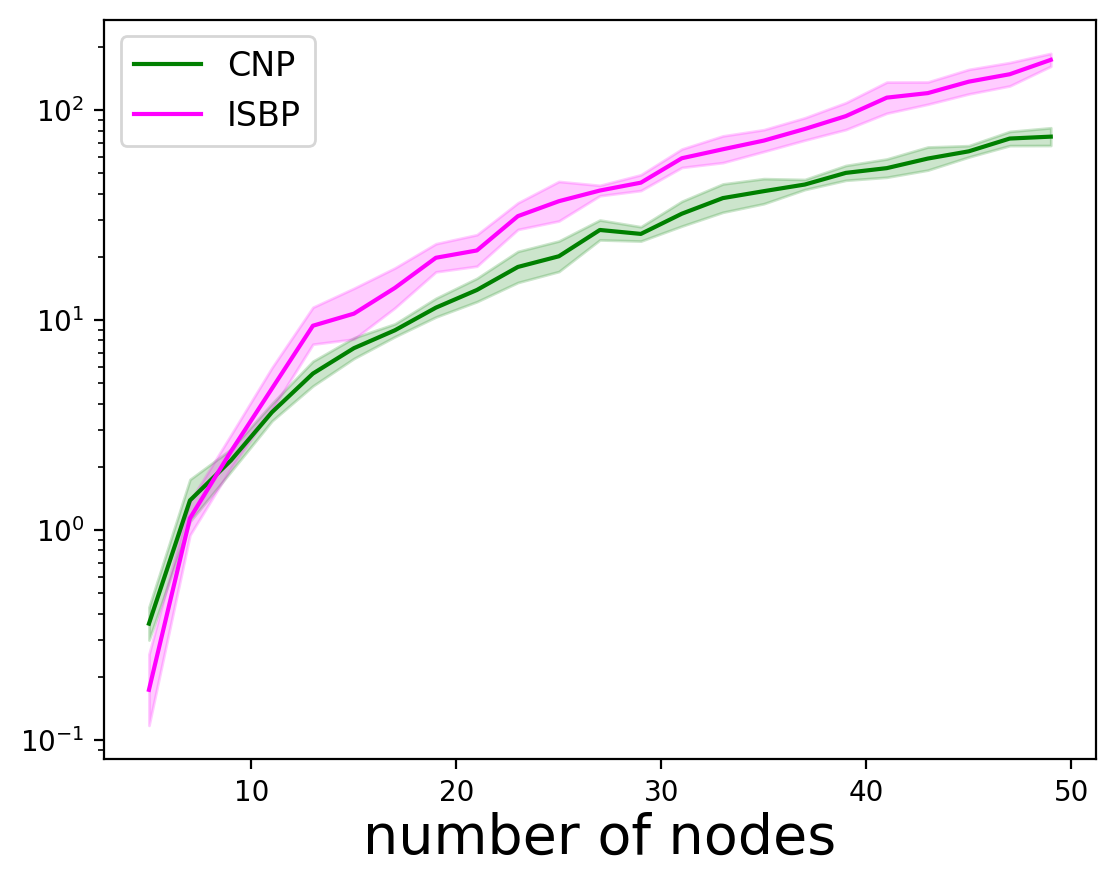}
    \caption{$d=10$}
    \end{subfigure}%
    \begin{subfigure}{.24\textwidth}
    \centering
    \includegraphics[width=1\textwidth]{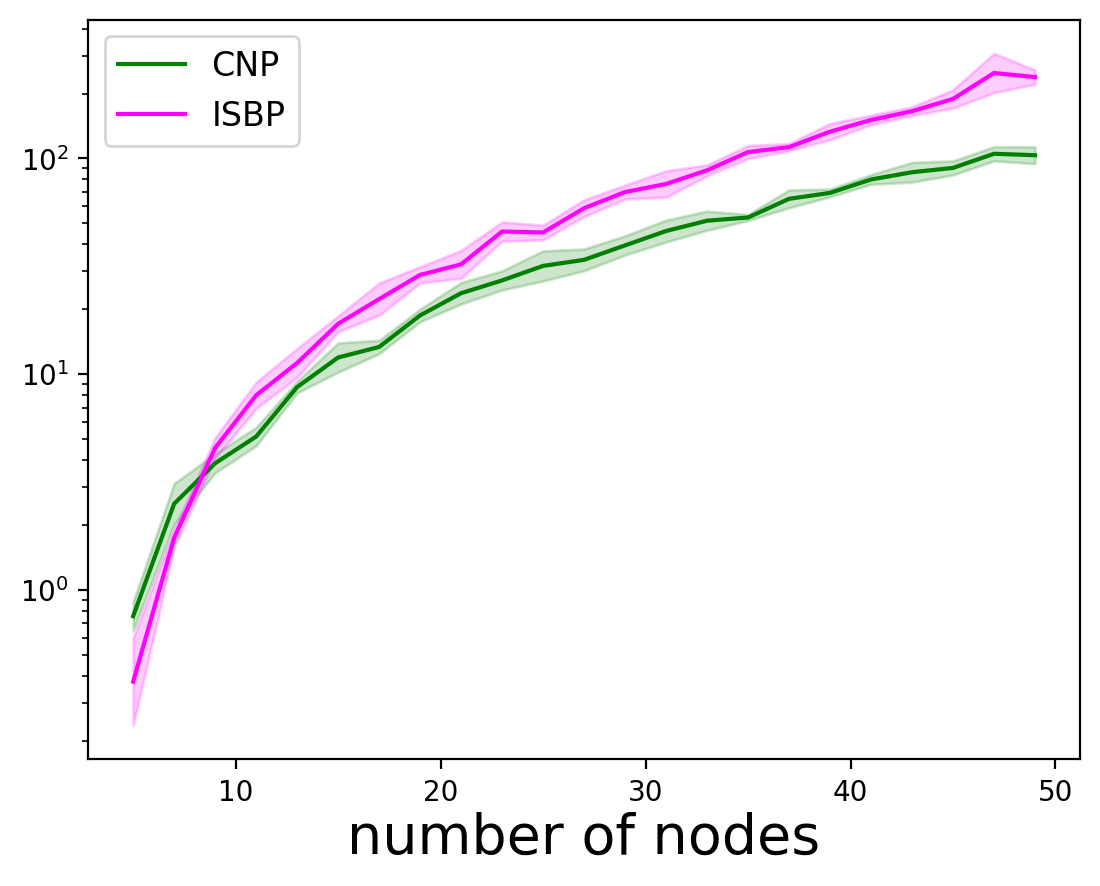}
    \caption{$d=20$}
    \end{subfigure}

    \caption{Performance evaluation of CNP and ISBP. The four rows, from top to bottom, are associated with line graph, HMM, star graph and long star graph respectively. The subplots in the same column share the same value of $d$.}
    \label{fig:1000}
\end{figure}

\subsection{Filtering for collective dynamics} 
To demonstrate the performance of the methods developed in this paper, 
we present an example to demonstrate the effectiveness of our MOT framework in filtering problems for collective dynamics.
Consider a synthetic particle ensemble with $10000$ agents moving over a $20 \times 20$ grid, aiming from bottom-left corner to top-right corner. 
The dynamics of the agents follow a log-linear distribution characterized by four factors: the distance between two positions, the angle between the movements direction and an external force, the angle between the direction of movement and the direction to the goal, and the preference to stay in the original cell. 
The weights for the log-linear model associated with these four factors are set to be $(3,5,5,10)$. This model has been used to model the migration of birds \cite{SunSheKum15,SinHaaZha20}.

There are 16 sensors placed over the grid as shown in Figure~\ref{fig:ensemble_flowa}. These sensors can not measure the exact locations of the agents. Instead, the measurement of each sensor is a count of agents it currently observes. The probability of an observation decreases exponentially as the distance between the sensor and the agent increases.This type of sensors show up in many real applications. 
For instance, the sensors can be Wi-Fi hotspots, or cell phone based stations, which can measure the number of phones connected to them. 
Our goal is to estimate the movement of the whole population using this limited sensor information. 

This filtering problem for collective dynamics can be modeled as a MOT problem, or equivalently a constrained marginal inference problem in our framework. In particular, the agents form a HMM and the sensor measurements correspond to constraints on marginal distributions over the observation nodes. The number of discrete states at each node is $d=20\times 20 = 400$ and the number of nodes depends on the number of time steps. 
We simulate the model for 15 time steps and run both ISBP and CNP to infer the marginal distributions of the free nodes in order to estimate the group behavior of the $10000$ agents. The agents start in two clusters: one in the left-bottom and one in the center-bottom; both aim to reach the right-top corner of the grid in 15 time steps. The results are depicted in Figure~\ref{fig:ensemble_flowb}. Both CNP and ISBP give the same estimation result and thus we only display one of them in the figures. As can be seen from the plots, even though the sensor data (center column) is hard to interpret visually, our constrained marginal inference framework can still infer the population movements to a satisfying accuracy. 

\begin{figure*}[tb]
    \centering
    \begin{subfigure}[b]{0.48\textwidth}
        \centering
        \includegraphics[scale = 0.6]{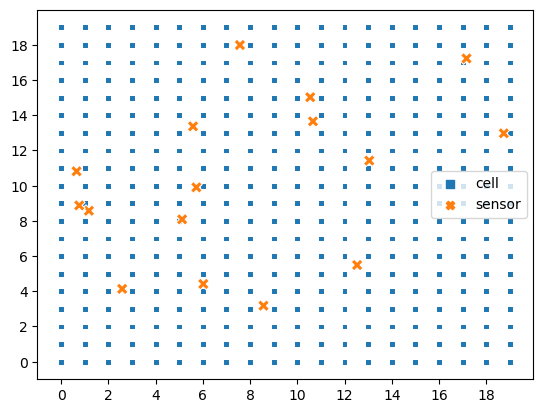}
        \caption{Sensor locations}
        \label{fig:ensemble_flowa}
    \end{subfigure} \hspace{0.5cm}
    \begin{subfigure}[b]{0.48\textwidth}
        \centering
        \includegraphics[scale=1.3]{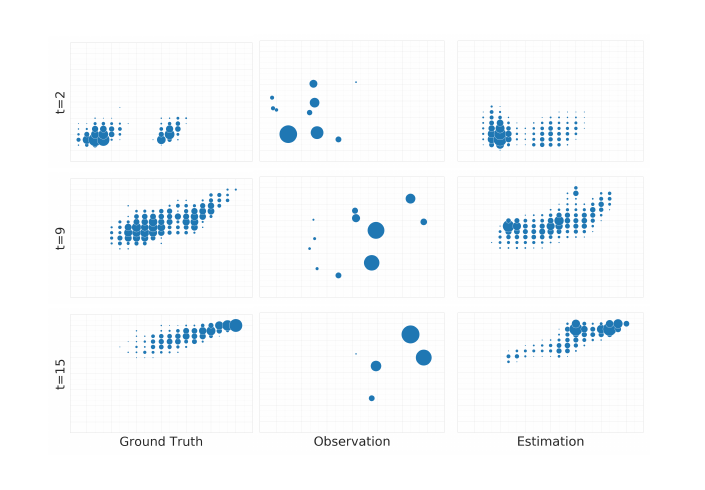}
        \caption{Simulation results}
        \label{fig:ensemble_flowb}
    \end{subfigure}
    \caption{Movement estimation of 10000 agents over a $20 \times 20$ grid for 15 time steps: (a) displays the grid and the locations of the sensors; (b) shows the estimation results. The three columns, from left to right, represents the simulated movement of agents at three time steps $t=2, 9, 15$, the sensor measurements, and the estimated agent distributions respectively. The size of the blue dots is proportional to the number of agents.}
    \label{fig:ensemble_flow}
    \vspace{-0.3cm}
\end{figure*}

\section{Conclusion}\label{sec:conclusion}
We studied multi-marginal optimal transport problems and pointed out an unexpected connection to probabilistic graphical models. This
relation between MOT on graphs and constrained PGMs provides a completely new perspective of both MOT and PGMs, which may have far reaching impact in the future development of both subjects, in both theory and applications. This connection also enables us to adapt the rich class of algorithms in PGMs to tackle difficult MOT problems. In this work, to highlight the key idea of this line of research, we focused on MOT on trees with discrete states. The next step is to generalize the results to more general graphs with cycles as well as continuous state spaces. These are more challenging problems in PGMs, for which exact Bayesian inference is usually too expensive, and one needs to turn to variational inference or sampling based methods \cite{KolFri09}.



\appendix
\subsection{Derivation of Constrained Norm-product algorithm (Algorithm \ref{alg:constrained norm-product})}\label{sec:proofCNP}
We follow the primal-dual ascent algorithm stated in Lemma \ref{lem:Primal-Dual Ascent}. 
Denote $\boldsymbol\lambda _j = \left\{ \lambda_{j,\alpha}(\mx_\alpha)\right\}$ and $\boldsymbol\nu_{j} = \left\{ \nu_{j,\alpha}(\mx_\alpha)\right\}$. For the sake of convenience, we introduce the following notation
\begin{align}
    \label{eq:hat_psi} \hat{\psi}_{j,\alpha}(\mx_\alpha) &:= \psi_\alpha(\mx_\alpha)\exp(-\nu_{j,\alpha}(\mx_\alpha)), \\
    \hat{c}_{j\alpha} &:= c_\alpha + c_{j\alpha}, \nonumber \\
    \hat{c}_j &:= c_j + \sum_{\alpha \in N(j)} c_\alpha . \nonumber
\end{align}

For a fixed $1\le j\le J$, the step \eqref{eq:pdab} in the primal-dual ascent algorithm requires solving
	\[
		\min_{\mb \in dom(f)\cap dom(h_j)} f(\mb)+h_j(\mb)+\mb^T\boldsymbol\nu_j.
	\]
Recall that $f(\mb) = \hat f(\mb) + \delta_\cP(\mb)$, $h_j(\mb) = \hat h_j(\mb) + \delta_\cM(\mb)$ where
	\begin{eqnarray*}
    \cM &=& \Big\{ \mb : \sum_{\bx_\alpha \backslash x_j} b_\alpha(\bx_\alpha) = b_j(x_j), \forall j\in V, \alpha\in N(j),\, b_j(x_j) = \mu_j(x_j), \, \forall j \in \Gamma \Big\},
\\
    \cP &=& \Big\{ \mb: \sum_{\bx_\alpha} b_\alpha(\bx_\alpha)= 1, \quad  \forall \alpha \in F \Big\},
\\
    \hat{f}(\mb) &=&    - \sum_{\mx_{\alpha},\alpha\in F} b_\alpha   (\mx_\alpha) \ln \psi_\alpha(\mx_\alpha)  -
        \sum_{\alpha\in F} \epsilon\, c_\alpha \cH(\mb_\alpha),
\\
  \hat{h}_{j}(\mb) &=& - \sum_{x_j} b_j(x_j)\ln \phi_j(x_j)  - \epsilon\, c_j \cH(\mb_j) -
    \sum_{\alpha \in N(j)} \epsilon\, c_{j \alpha} (\cH(\mb_\alpha)-\cH(\mb_j)).
\end{eqnarray*}
Thus, for any fixed $1\le j\le J$, step \eqref{eq:pdab} of the primal-dual ascent algorithm can be reformulated as
\begin{subequations}\label{eq:general}
\begin{eqnarray}\label{eq:generala}
    \min_{\mb_j,\mb_\alpha,\alpha \in N(j)}\!\!\!\!&&\!\!\!\! 
    \Bigg\{
        \!-\!\sum_{x_j}b_j(x_j)\ln\phi_j(x_j)
        \!-\!\sum_{\alpha \in N(j)}\sum_{\mx_\alpha} b_\alpha (\mx_\alpha)\ln\hat{\psi}_{j,\alpha}(\mx_\alpha)
        \!-\!\epsilon\hat{c}_{j}\cH(\mb_j)
        \!-\!\sum_{\alpha \in N(j)} \epsilon \hat{c}_{j\alpha}(\cH(\mb_\alpha) - \cH(\mb_j))
    \Bigg\}
\\\label{eq:generalb}
    \mbox{subject to}\!\!\!\!&&\!\!\!\!
    \sum_{\mx_\alpha} b_\alpha(\mx_\alpha) = 1 , \quad
    \sum_{\mx_\alpha \backslash x_j} b_\alpha(\mx_\alpha) = b_j(x_j), \quad
    \forall x_j, \alpha \in N(j), 
    \\\label{eq:generalc}
    &&\!\!\!\!b_j(x_j) = \mu_j(x_j) , \quad \forall x_j, ~\mbox{if}~j \in \Gamma.
\end{eqnarray}
\end{subequations}
Note that when $j\in \Gamma$, the above problem has an extra constraint \eqref{eq:generalc} compared to the cases where $j\notin \Gamma$.

We next derive a closed form solution $\mb_j^*, \mb_\alpha^*$ to \eqref{eq:general}. The constraint \eqref{eq:generalb} implies that $\mb_j$ is a marginal distribution of $\mb_\alpha$, thus, $\mb_\alpha$ can be rewritten in terms of conditional distribution $\mb_{\alpha\mid j}$ as
	\begin{equation}\label{eq:conditional}
		b_\alpha(\mx_\alpha) = b_j(x_j) b_{\alpha\mid j}(\mx_\alpha\mid x_j).
	\end{equation}
The entropy $\cH(\mb_\alpha)$ can be rewritten as \cite{CovTho12}
	\[
		\cH(\mb_\alpha) = \cH(\mb_j) + \sum_{x_j}b_j(x_j) \cH(\mb_{\alpha | j})
	\]
where
	\[
		\cH(\mb_{\alpha | j}) = -\sum_{\mx_\alpha\backslash x_j} b_{\alpha\mid j}(\mx_\alpha\mid x_j) \ln b_{\alpha\mid j}(\mx_\alpha\mid x_j).
	\]
Thus, in terms of new variables $\mb_j, \mb_{\alpha\mid j}, \alpha\in N(j)$, the optimization problem \eqref{eq:general} reads 
\begin{equation}
    \label{eq:condition}
    \min_{\mb_j}
    \Bigg\{
        -\sum_{x_j}b_j(x_j)\ln\phi_j(x_j)
        -\epsilon\hat{c}_j\cH(\mb_j)
        +\sum_{x_j}b_j(x_j)
            \sum_{\alpha \in N(j)}\epsilon\hat{c}_{j\alpha}
                \bigg[
                    \underbrace{
                        \min_{\mb_{\alpha\mid j}}
                        -\sum_{\mx_\alpha \setminus x_j} b_{\alpha\mid j} (\mx_\alpha \mid x_j)
                        \ln\hat{\psi}_{j,\alpha}^{1 / (\epsilon \hat{c}_{j\alpha})}(\mx_\alpha)
                        -\cH(\mb_{\alpha \mid j})
                    }_{\star}
                \bigg]
    \Bigg\}
\end{equation}
together with the extra constraint $\mb_j = \boldsymbol\mu_j$ if $j\in\Gamma$. One advantage of this reformulation is that the problem now can be optimized over $\mb_{\alpha\mid j}$ first and then over $\mb_j$. 

Minimizing \eqref{eq:condition} over $\mb_{\alpha\mid j}$ is a standard exercise, and the minimizer is
	\[
		b_{\alpha\mid j}^*(\mx_\alpha\mid x_j) =  \hat{\psi}_{j,\alpha}(\mx_\alpha)^{1/\epsilon \hat{c}_{j\alpha}}/\sum_{\mx_\alpha \setminus x_j} \hat{\psi}_{j,\alpha}(\mx_\alpha)^{1/\epsilon \hat{c}_{j\alpha}}.
	\]
Thus, the value for block $(\star)$ is 
\begin{align*}
    (\star) = -\ln{\sum_{\mx_\alpha \setminus x_j} \hat{\psi}_{j,\alpha}(\mx_\alpha)^{1/\epsilon \hat{c}_{j\alpha}}}.
\end{align*}
Denote
\begin{equation} \label{eq:def_m}
    m_{\alpha \rightarrow j}(x_j) = \left(
        \sum_{\mx_\alpha \setminus x_j} \hat{\psi}_{j,\alpha}(\mx_\alpha)^{1/\epsilon \hat{c}_{j\alpha}}
    \right)^{\epsilon \hat{c}_{j\alpha}},
\end{equation}
then \eqref{eq:condition} with optimal $\mb_{\alpha\mid j}$ can be simplified as
\begin{equation}\label{eq:outerloop}
    \min_{\mb_j}
    \left[ 
      -\cH(\mb_j)
      -\sum_{x_j}b_j(x_j)\ln\phi_j^{1/ \epsilon\hat{c}_j}(x_j)
      \prod_{\alpha \in N(j)} m_{\alpha\rightarrow j}^{1/ \epsilon\hat{c}_j}(x_j)    
    \right].
\end{equation}
When $j\in\Gamma$, $\mb_j = \boldsymbol\mu_j$ is the only feasible point; $\mb_j^*=\boldsymbol\mu_j$. When $j\notin \Gamma$, \eqref{eq:outerloop} is again a standard exercise with the unique minimizer being
\begin{equation} \label{eq:belief_i_optimal}
        b_j^*(x_j) \propto \left( 
            \phi_j(x_j)\prod_{\alpha \in N(j)} m_{\alpha\rightarrow j}(x_j)    
        \right)^{1/ \epsilon\hat{c}_{j}}.
\end{equation}
Combining $\mb_j^*$ and $\mb_{\alpha | j}^*$, we obtain
\begin{equation} \label{eq:belief_alpha_optimal}
    b_\alpha^*(\mx_\alpha) = b_j^*(x_j)b_{\alpha | j}^*(\mx_\alpha | x_j) = 
    \frac{b_j^*(x_j)}{m_{\alpha\rightarrow j}^{1/ \epsilon\hat{c}_{j\alpha}}(x_j)}
    \hat{\psi}_{j,\alpha}(\mx_{\alpha})^{1/ \epsilon\hat{c}_{j\alpha}}. 
\end{equation}

Next, we move to step \eqref{eq:pdac} of the primal-dual ascent algorithm (Lemma \ref{lem:Primal-Dual Ascent}), which reads
	\[
		\boldsymbol\lambda_j \leftarrow -\boldsymbol\nu_j - \nabla \hat f(\mb^*) + A^T \boldsymbol\sigma  
	\]
with $\boldsymbol\sigma$ being an arbitrary vector. The vector $A^T \boldsymbol\sigma$ spans the orthogonal space to the domain $\cB$ (see Lemma \ref{lem:Primal-Dual Ascent}) of $f$. In our problem, $\cB=\cP$ is the probability simplex, thus $A^T \boldsymbol\sigma$ is in alignment with ${\bf 1}$, the vector with all 1 entries. It follows that 
\begin{equation}
    \lambda_{j,\alpha}(\mx_\alpha) = 
    -\nu_{j,\alpha}(\mx_\alpha)
    - \nabla\hat{f} (b_\alpha^* (\mx_\alpha))
    + \sigma_{\alpha} {\bf 1}.
\end{equation}
The value of $\nabla \hat{f} (b_\alpha^*(\mx_\alpha))$ is
\[
    \nabla \hat{f} (b_\alpha^*(\mx_\alpha)) = 
    -\ln{\psi_\alpha (\mx_\alpha)} + \epsilon c_\alpha
        \left( 
            \ln{b_\alpha^*(\mx_\alpha)+1}    
        \right).
\]

Define $n_{j \rightarrow \alpha}(\mx_\alpha)$ as 
\begin{equation}\label{eq:define_n}
    n_{j \rightarrow \alpha}(\mx_\alpha) := \exp(-\lambda_{j,\alpha}(\mx_{\alpha})),
\end{equation}
then, in view of \eqref{eq:belief_alpha_optimal},
\begin{align}\nonumber
    n_{j \rightarrow \alpha}(\mx_\alpha) & \propto \exp(\nu_{j,\alpha}(\mx_\alpha) - \ln \psi_\alpha(\mx_\alpha))(b_\alpha^*(\mx_\alpha))^{\epsilon c_\alpha} \\\nonumber
    &= \hat{\psi}_{j,\alpha}^{-1}(\mx_\alpha)\left(\frac{b_j^*(x_j)}{m_{\alpha \rightarrow j}^{1/\epsilon \hat{c}_{j\alpha}}(x_j)}\right)^{\epsilon c_\alpha} \hat{\psi}_{j,\alpha}^{c_\alpha/\hat{c}_{j\alpha}}(\mx_\alpha) \\\nonumber
    &= \left(\frac{b_j^*(x_j)}{m_{\alpha \rightarrow j}^{1/\epsilon \hat{c}_{j\alpha}}(x_j)}\right)^{\epsilon c_\alpha} \hat{\psi}_{j,\alpha}^{c_\alpha/\hat{c}_{j\alpha} -1 }(\mx_\alpha) \\ \label{eq:nupdate}
     &=\left(\frac{b_j^*(x_j)}{m_{\alpha \rightarrow j}^{1/\epsilon \hat{c}_{j\alpha}}(x_j)}\right)^{\epsilon c_\alpha} 
    \hat{\psi}_{j,\alpha}^{-c_{j \alpha}/\hat{c}_{j\alpha}}(\mx_\alpha),
\end{align}
where the last equation is due to $\hat{c}_{j\alpha} = c_\alpha + c_{j\alpha}$.
By step \eqref{eq:pdaa} of the primal-dual ascent algorithm, $\nu_{j,\alpha}=\sum_{i \in N(\alpha) \setminus j}\lambda_{i,\alpha}(\mx_\alpha)$. Combining it with \eqref{eq:hat_psi} and \eqref{eq:define_n} yields
\begin{eqnarray}
\hat{\psi}_{j,\alpha}(\mx_\alpha) &=& \psi_\alpha(\mx_\alpha) \prod_{i \in N(\alpha) \setminus j} \exp(-\lambda_{i,\alpha}(\mx_\alpha))\nonumber
\\
\label{eq:hat_psi_update}
  &=& \psi_\alpha(\mx_\alpha) \prod_{i \in N(\alpha) \setminus j} n_{i \rightarrow \alpha}(\mx_\alpha).
\end{eqnarray}

Finally, plugging \eqref{eq:hat_psi_update} into \eqref{eq:def_m} leads to 
\[
	 m_{\alpha \rightarrow j}(x_j) = 
    \left( 
        \sum_{\mx_\alpha \setminus x_j}
            \left( 
                \psi_\alpha(\mx_\alpha)\prod_{i\in N(\alpha) \setminus j} n_{i  \rightarrow \alpha}(\mx_\alpha)
            \right)^{1/\epsilon \hat{c}_{j\alpha}}
    \right)^{\epsilon\hat{c}_{j\alpha}}.
\]
Plugging \eqref{eq:hat_psi_update} into \eqref{eq:nupdate}, in view of the different forms of $\mb_j^*$ for $j\in\Gamma$ and $j\notin\Gamma$, we obtain
\[
n_{j \rightarrow \alpha}(\mx_{\alpha}) \propto
    \left(
        \frac{\phi_j^{1/\hat{c}_j}(x_j)
                \prod_{\beta\in N(j)} m_{\beta\rightarrow j}^{1/\hat{c}_j}(x_j)}
        {m_{\alpha \rightarrow j}^{1/\hat{c}_{j\alpha}}(x_j)}
    \right)^{c_\alpha}
    \left(
        \psi_\alpha(\mx_\alpha)
            \prod_{i\in N(\alpha)\setminus j} n_{i\rightarrow \alpha}(\mx_\alpha)
    \right)^{-c_{j\alpha}/\hat{c}_{j\alpha}}
\]
for $j\notin \Gamma$, and 
\[
 n_{j \rightarrow \alpha}(\mx_{\alpha}) \propto
    \left(
        \frac{\mu_j (x_j)}
        {m_{\alpha \rightarrow j}^{1/\epsilon \hat{c}_{j\alpha}}(\mx_\alpha)}
    \right)^{\epsilon c_\alpha}
    \left(
        \psi_\alpha(\mx_\alpha)
            \prod_{i\in N(\alpha)\setminus j} n_{i\rightarrow \alpha}(\mx_\alpha)
    \right)^{-c_{j\alpha}/\hat{c}_{j\alpha}}
\]
for $j\in \Gamma$.
This concludes the derivation.

\vspace*{.1in}

\bibliographystyle{IEEEtran}
\bibliography{./refs}

\end{document}